\newcommand{\bs}{\symbol{92}}
\def\CC{{\mathbb C}}
\def\RR{{\mathbb R}}
\def\NN{{\mathbb N}}
\def\PPP{{\mathbb P}}
\def\ra{\rightarrow}
\def\lora{\longrightarrow}
\def\<{\langle}
\def\>{\rangle}
\def\norm{\lVert \cdot \rVert}
\def\B°{B^{\circ}}
\def\Bo{B^{\circ}}
\def\°{\circ}
\def\F°{F^{\circ}}
\def\subset{\subseteq}
\def\del{\partial}
\def\|{\text{ }|\text{ }}
\def\vec{\begin{pmatrix}}          
\def\tor{\end{pmatrix}}            
\def\w3{\sqrt{3}}                  
\def\gg{\mathfrak{g}}
\def\kk{\mathfrak{k}}
\def\pp{\mathfrak{p}}
\def\aa{\mathfrak{a}}
\def\sl{\mathfrak{sl}}
\def\H{\mathcal{H}}
\def\P{\mathcal{P}}
\def\C{\mathcal{C}}
\def\N{\mathcal{N}}
\def\E{\mathcal{E}}
\def\F{\mathcal{F}}
\def\Hn{\H_n}
\def\Pn{\P_n}
\def\PHn{\text{P}(\H_n)}
\def\delrel{\del_{\text{rel}}}
\def\sp4{\mathfrak{sp}(4,\RR)}
\def\Sp4{Sp(4,\RR)}  
\def\coloneqq{\mathrel{\mathop:}=}
 \def\xsat{\overline{X}^S_\tau}
 \def\xhor{\overline{X}^{\text{hor}}}
 \def\PSat{\overline{\Pn}^S}
 \def\abar+{\overline{\aa^+}}
\DeclareMathOperator{\cl}{cl}
\DeclareMathOperator{\ri}{ri}
\DeclareMathOperator{\conv}{conv}
\DeclareMathOperator{\PSL}{PSL}
\DeclareMathOperator{\SL}{SL}
\DeclareMathOperator{\SU}{SU}
\DeclareMathOperator{\PSU}{PSU}
\DeclareMathOperator{\ad}{ad}
\DeclareMathOperator{\Ad}{Ad}
\DeclareMathOperator{\diag}{diag}
 \newcommand\rquotient[2]{
        \mathchoice
            {
                \text{\raise1ex\hbox{$#1$}\Big/\lower1ex\hbox{$#2$}}%
            }
            {
                #1\,/\,#2
            }
            {
                #1\,/\,#2
            }
            {
                #1\,/\,#2
            }
    }
\newcommand\lquotient[2]{
        \mathchoice
            {
                \text{\lower1ex\hbox{$#1$}$\Big\backslash$\raise1ex\hbox{$#2$}}%
            }
            {
                _#1\,\backslash\,^#2
            }
            {
                _#1\,\backslash\,^#2
            }
            {
                _#1\,\backslash\,^#2
            }
    }
\newcommand{\frakg}{\mathfrak{g}}
\newcommand{\fraka}{\mathfrak{a}}
\newcommand{\frakk}{\mathfrak{k}}
\newtheoremstyle{standard}
{8pt} 
{8pt} 
{\slshape}    
{} 
{\bfseries}  
{}           
{0.5em}        
{\thmname{#1}\thmnumber{ #2}\thmnote{ (#3)}}
\newtheoremstyle{standarddefi}
{8pt} 
{8pt} 
{}    
{} 
{\bfseries}  
{}           
{0.5em}        
{\thmname{#1}\thmnumber{ #2}\thmnote{ (#3)}}
\newtheoremstyle{nonumber}
{8pt} 
{8pt} 
{\slshape}    
{} 
{\bfseries}  
{}           
{0.5em}        
{\thmname{#1}\thmnote{ #3} }
\newtheoremstyle{nonumberprf}
{8pt} 
{8pt} 
{}    
{} 
{\bfseries}  
{}           
{0.5em}        
{\thmname{#1}\thmnote{ #3} }
\newtheoremstyle{comments}
{8pt} 
{8pt} 
{\footnotesize  }    
{} 
{\bfseries}  
{}           
{0.5em}        
{\thmname{#1}\thmnote{ #3} }
\theoremstyle{standard}
\newtheorem{thrm}{Theorem}[section]
\newtheorem{prop}[thrm]{Proposition}
\newtheorem{lem}[thrm]{Lemma}
\theoremstyle{standarddefi}
\newtheorem{defi}[thrm]{Definition}
\newtheorem{ex}[thrm]{Example}
\newtheorem{rem}[thrm]{Remark}
\newtheorem{quest}[thrm]{Question}
\theoremstyle{nonumber}
\theoremstyle{nonumberprf}
\newtheorem{notation}{Notation}
\begin{document}

\title{Horofunction Compactifications of Symmetric Spaces}

\author{Thomas Haettel}
\thanks{}

\author{Anna-Sofie Schilling}
\thanks{AS was partially supported by the European Research Council under ERC-Consolidator grant 614733, and by the German Research Foundation in the RTG 2229 Asymptotic Invariants and Limits of Groups and Spaces}

\author{Cormac Walsh}
\thanks{}

\author{Anna Wienhard}
\thanks{AW was partially supported by the National Science Foundation under agreement DMS-1536017, by the Sloan Foundation, by the European Research Council under ERC-Consolidator grant 614733, and by the Klaus-Tschira-Foundation.}

\date{\today}

\begin{abstract}
	\noindent We consider horofunction compactifications of symmetric spaces with respect to invariant Finsler metrics. We show that any (generalized) Satake compactification can be realized as a horofunction compactification with respect to a polyhedral Finsler metric.
\end{abstract}

\maketitle
\thispagestyle{empty}

\bibliographystyle{alpha}

 \tableofcontents


\section{Introduction}
Symmetric spaces of non-compact type arise in many areas of mathematics. Topologically they are diffeomorphic to a finite-dimensional vector space, in particular they are non-compact. The problem of constructing compactifications of symmetric spaces of non-compact type has been a classical problem. For an overview of compactifications of symmetric spaces see \cite{BJ, GJT}. 

Any proper metric space $(X,d)$ can be compactified by embedding $X$ into the space of continuous functions $C_{p_0}(X)$ on $X$ which vanish at a fixed base point $p_0$:
\begin{align} 
      \begin{split}
	  X &\lora C_{p_0}(X)  \\
	  z &\longmapsto d(\cdot,z) - d(p_0, z). 
      \end{split}   
\end{align}
The closure of the image is the horofunction compactification $\overline{X}_d^{hor}$ of $(X,d)$. 

In this article we investigate horofunction compactifications of symmetric spaces endowed with invariant Finsler metrics.

It is well known that the visual compactification of a symmetric space $X$ is realized as horofunction compactification with respect to the invariant Riemannian metric. 
We show that all Satake compactifications of $X$, and more generally all generalized Satake compactifications as defined in \cite{GuichardKasselWienhard} can be realized as horofunction compactifications with respect to polyhedral Finsler metrics on $X$. 

Any $G$-invariant Finsler metric on $X$ induces a Weyl group invariant norm on a maximal flat $F \cong \RR^k$. The Finsler metric is said to be polyhedral, if the unit ball for its Weyl group invariant norm on $F \cong \RR^k$ is a finite sided polytope. 

Before stating the result more precisely we recall that Satake compactifications $\overline{X}_\tau^S$ are associated to irreducible faithful representations 
$\tau: G \ra \PSL(n,\CC)$, which give rise to embeddings $X = G/K \rightarrow \PPP(Herm(\CC^n))$, $gK \mapsto [\tau(g)^*\tau(g)]$, see \cite{Sat}. Generalized Satake compactifications are defined the same way, but allowing $\tau$ to be reducible. There are finitely many isomorphism classes of Satake compactifications, determined by subsets of the set of simple roots, but infinitely many isomorphism classes of generalized Satake compactifications. 

\begin{thrm}\label{thm:satake}
Let $X = G/K$ be a symmetric space of non-compact type. Any generalized Satake compactification of $X$ can be realized as the horofunction compactification of a  polyhedral $G$-invariant Finsler metric on $X$. 
More precisely, if the generalized Satake compactification is given by a representation $\tau: G \to \PSL(n,\CC)$, then $\overline{X}_\tau^S$ is isomorphic to $\overline{X}_d^{hor}$, where $d$ is the Finsler metric on $X$ whose unit ball in a maximal Cartan subspace $\aa$  is the polytope dual to $-D = - \conv(\mu_1, \cdots , \mu_k)$, where $\mu_i$ are the weights of the representation $\tau$. 
\end{thrm}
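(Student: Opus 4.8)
The plan is to exhibit $d$ as the pullback, under the $\tau$-embedding, of an operator-norm Finsler metric on the symmetric space $X_n=\PSL(n,\CC)/\PSU(n)$, to prove the theorem for $X_n$ with the standard representation as a base case, and then to transport it along the $\tau$-embedding by a functoriality property of horofunction compactifications.

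\textbf{Step 1 (reduction to $X_n$).} The weights of $\tau$ form a $W$-invariant multiset, whose centroid — a $W$-fixed vector of $\aa^*$ — is $0$; since $\tau$ is faithful the weights span $\aa^*$, so $D=\conv(\mu_1,\dots,\mu_k)$ is full-dimensional with $0\in\intt D$, and its polar is a bounded $W$-invariant (in general non-symmetric, and accordingly $d$ is non-reversible) polytope — a legitimate dual unit ball. The crucial point is that $d=\iota^*d_n$, where $\iota\colon X\to X_n$, $gK\mapsto[\tau(g)^*\tau(g)]$, is the $G$-equivariant totally geodesic embedding underlying the Satake construction, and $d_n$ is the $\PSL(n,\CC)$-invariant Finsler metric with $d_n([\mathrm{Id}],[P])=\tfrac12\log\lambda_{\max}(P)$ for $P$ positive Hermitian of determinant $1$ — equivalently, the metric whose unit ball in a Cartan subspace $\aa_n$ is polar to $\conv(\varepsilon_1,\dots,\varepsilon_n)$, the $\varepsilon_i$ being the weights of $\CC^n$. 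Indeed, on a maximal flat $\tau(\exp H)$ is self-adjoint with eigenvalues $e^{\mu_i(H)}$, so the Cartan projection of $g_1^{-1}g_2$, hence the $G$-invariant Finsler distance defined by the dual ball of $d$, is read off from the relative eigenvalues $e^{2\mu_i(\,\cdot\,)}$ of the positive forms $\iota(g_1K),\iota(g_2K)$ exactly as the corresponding quantity for $d_n$, with $\conv(\varepsilon_i)$ replaced by $D$; this is precisely why the dual unit ball of $d$ must be (in the sign conventions of the paper) the weight polytope.

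\textbf{Step 2 (base case).} I would show that $\overline{X_n}^{hor}_{d_n}$ is $\PSL(n,\CC)$-equivariantly homeomorphic to the closure of $X_n$ in $\PPP(\mathrm{Herm}(\CC^n))$, i.e. to the generalized Satake compactification attached to the standard representation. Using $d_n([Q],[P])=\tfrac12\log\lambda_{\max}(Q^{-1/2}PQ^{-1/2})-\tfrac1{2n}\log\det(Q^{-1/2}PQ^{-1/2})$ together with $\lambda_{\max}(Q^{-1/2}PQ^{-1/2})=\max_{w\neq0}\langle Pw,w\rangle/\langle Qw,w\rangle$, one checks that for any sequence $[P_j]\to[\Pi]$ in $\PPP(\mathrm{Herm})$ with $\Pi$ positive-semidefinite and singular, $d_n(\cdot,[P_j])-d_n([\mathrm{Id}],[P_j])$ converges to a horofunction depending only on the class $[\Pi]$; this yields a $\PSL(n,\CC)$-equivariant continuous bijection of $\overline{X_n}^{hor}_{d_n}$ onto the $\PPP(\mathrm{Herm})$-closure of $X_n$, hence (between compact spaces) a homeomorphism. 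Here the sign matters: for the reverse metric, with $\lambda_{\min}$ in place of $\lambda_{\max}$, the same limits depend also on the rescaled form carried by a degenerating $P$ on $\ker\Pi$, producing a strictly larger boundary. Alternatively one cites the explicit description of horofunction compactifications of polyhedral $G$-invariant Finsler metrics from the earlier sections, specialised to $d_n$.

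\textbf{Step 3 (functoriality; the main obstacle).} Since $\iota$ is an isometric embedding of proper metric spaces, the restriction to $\iota(X)$ of an $X_n$-horofunction arising as $\lim_j\bigl(d_n(\cdot,\iota(z_j))-d_n(\iota(p_0),\iota(z_j))\bigr)$ with $z_j\in X$ equals the corresponding horofunction $\lim_j\bigl(d(\cdot,z_j)-d(p_0,z_j)\bigr)$ of $(X,d)$. Granting that such an $X_n$-horofunction is determined by its restriction to the totally geodesic subspace $\iota(X)$, the map $h\mapsto\lim_j\bigl(d_n(\cdot,\iota(z_j))-d_n(\iota(p_0),\iota(z_j))\bigr)$ is a well-defined continuous injective $G$-map $\overline{X}^{hor}_d\hookrightarrow\overline{X_n}^{hor}_{d_n}$ with image $\overline{\iota(X)}$; combined with Step 2 — under which that closure corresponds to $\overline{\iota(X)}\subseteq\PPP(\mathrm{Herm})$, which is by definition $\overline{X}^S_\tau$ — this gives $\overline{X}^{hor}_d\cong\overline{X}^S_\tau$, proving the theorem. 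The main obstacle is precisely the determinacy claim: $d_n$ is polyhedral, so $X_n$ carries many non-Busemann horofunctions, and one must prevent a sequence in $\iota(X)$ from converging to an $X_n$-horofunction not seen on $\iota(X)$. I would handle this by writing $z_j=k_j\exp(H_j)K$ with $H_j\in\overline{\aa^+}$, $\|H_j\|\to\infty$, passing to a subsequence so that $k_j\to k_\infty$ and $H_j/\|H_j\|\to Y$, and reducing the statement — via the compatibility of $\iota$ with maximal flats — to the corresponding one for the linear inclusion $(\aa,\|\cdot\|_d)\hookrightarrow(\aa_n,\|\cdot\|_{d_n})$ induced by $\iota$ on Cartan subspaces, which is tractable through the description of horofunctions of polyhedral norms from the earlier sections. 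An alternative route, bypassing $X_n$, compares $\overline{X}^{hor}_d$ and $\overline{X}^S_\tau$ directly as $G$-compactifications: their boundaries are stratified by the proper faces of the weight polytope modulo $W$, and the remaining point is that the parabolic subgroup stabilising each stratum agrees on the two sides, the genuinely reducible (``generalized'') case being the delicate one.
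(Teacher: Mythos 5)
Your Steps 1 and 2 are essentially sound and follow a route the paper does not take: the paper never passes through an ambient $\Pn$, but instead proves a Convexity Lemma for polyhedral norms (Lemma~\ref{lem:convexity lemma}), shows that the closure of a maximal flat in $\overline{X}_d^{hor}$ coincides with the intrinsic horofunction compactification of the flat (Theorem~\ref{thm:embedding flat in X}), identifies both flat closures with the weight polytope, and globalizes via the Cartan decomposition and compactness of $K$. Your pullback identity $d=\iota^*d_n$ is correct (on $\aa$ both distances are $H\mapsto\max_i\mu_i(H)$, and $G$-invariance does the rest), and the $\lambda_{\max}$ computation does show that for $[P_j]\to[\Pi]$ the functions $\psi_{[P_j]}$ converge locally uniformly to an explicit function of $[\Pi]$; to make Step 2 complete you would still have to check injectivity (distinct classes $[\Pi]$ in the closure give distinct limit functions), which you assert but do not argue, and your parenthetical that the reverse metric has a ``strictly larger boundary'' is imprecise (it yields the dual Satake compactification, of the same size) though harmless.

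The genuine gap is Step 3, and it is not a technical remainder: the determinacy claim you flag is essentially equivalent to the paper's main technical theorem, and your proposed reduction does not establish it. Writing $z_j=k_j\exp(H_j)\cdot p_0$ and extracting $k_j\to k_\infty$, $H_j/\lVert H_j\rVert\to Y$ handles one sequence at a time, but the claim quantifies over \emph{pairs} of sequences in $X$ converging to the same horofunction of $(X,d)$: such sequences in general have different limits $k_\infty\neq k_\infty'$, flat parts of different types $(I,a^I)\neq(J,b^J)$, and do not lie in (or stay at bounded distance from) a common maximal flat, so nothing reduces to the linear inclusion $(\aa,\lVert\cdot\rVert_d)\hookrightarrow(\aa_n,\lVert\cdot\rVert_{d_n})$. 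The fibres of the map from sequence data to intrinsic horofunctions are large: a horofunction of type $(I,a^I)$ is invariant under the non-compact group $a^IK^IM(a^I)^{-1}N_I$ obtained as the Chabauty limit of the conjugates $a_nKa_n^{-1}$ (Proposition~\ref{limits of K}, Lemma~\ref{lem:limit invariance}), and two sequences with the same intrinsic limit may differ by such elements as well as by Weyl-group and flat ambiguities. Proving that the ambient limit in $\overline{\Pn}^{hor}_{d_n}$ (equivalently, in $\PPP(\Hn)$) is constant on these fibres is exactly what forces the machinery of Section~\ref{sec:flat to space}: the Convexity Lemma to interpolate between sequences of different types, the classification of types, and the double induction of the well-definedness argument. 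Equivalently, in your coordinates the claim is that $g\mapsto\log\lambda_{\max}\bigl(\tau(g)^{-1}\Pi\,\tau(g)^{-*}\bigr)$, modulo additive constants, separates the boundary points $[\Pi]$ of $\overline{X}^S_\tau$; this is a nontrivial statement (especially for reducible $\tau$) for which you give no argument. Your closing ``alternative route'' — matching stratifications and stabilizers of $\overline{X}_d^{hor}$ and $\overline{X}^S_\tau$ — presupposes a description of the strata and stabilizers of $\overline{X}_d^{hor}$, which is again the content of the theorem you are trying to avoid, so it does not close the gap either.
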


\begin{rem}
The idea to realize Satake compactifications as horofunction compactifications with respect to polyhedral Finsler metrics has been sketched in the second authors diploma thesis \cite{Schilling}. Specific Satake compactifications have been realized as horofunction compactifications of Finsler metrics before. Friedland and Freitas \cite{FriedFreiI, FriedFreiII} describe the horofunction compactification for Finsler $p$-metrics on $GL(n, \CC)/U_n$ for $p \in [1, \infty]$, which they show to agree with the visual compactification for $ p>1$, and the horofunction compactification of the Siegel upper half plane of rank $n$ for the $l_1$-metric, which they show to agrees with the bounded symmetric domain compactification, a minimal Satake compactification. Kapovich and Leeb realize the maximal Satake compactification of a symmetric space $X = G/K$ of non-compact type as the horofunction boundary with respect to a $G$-invariant Finsler metric on $X$ \cite{KL}. Parreau \cite{Parreau} shows that the horofunction 
compactification with respect to 
the Weyl chamber valued distance function is isomorphic to the maximal Satake compactification. There are related constructions for buildings, too, see for example  \cite{Brill}. Another paper by Ciobotaru, Kramer and Schwer \cite{CKS} on horofunction compactifications is in preparation.  
\end{rem}

In order to describe the horofunction compactification $\overline{X}_d^{hor}$ in more detail, we will make use of the Cartan decomposition $G = KAK$. The key result  is then the following theorem that the closure of a maximal flat $F = A p_0 \subset X$ in $\overline{X}_d^{hor}$ is isomorphic to the horofunction compactification of $F \cong \RR^k$ with respect to the induced Weyl group invariant Finsler norm. We formulate it here for a polyhedral Finsler metric, but prove it in the more general setting Finsler metrics satisfying a mild technical condition (the Convexity Lemma~\ref{lem:convexity lemma}).
The horofunction compactification of $F \cong \RR^k$ with respect to a Weyl group invariant polyhedral Finsler norm has been determined in \cite{WalII, JS}. 

\begin{thrm}\label{thm:flat_compactification}
Let $X = G/K$ be a symmetric space of non-compact type. Consider a polyhedral $G$-invariant Finsler metric on $X$ such that the set of extreme sets of the dual unit ball of a flat is closed. Let $\overline{X}^{hor}$ be the horofunction compactification of $X$ with respect to this Finsler metric. Then the closure of a maximal flat $F$ in $\overline{X}^{hor}$ is isomorphic to the horofunction compactification of $F$ with respect to the induced metric. 
\end{thrm}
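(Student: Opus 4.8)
The plan is to realize the asserted isomorphism as the map induced by restricting functions from $X$ to $F$, and to reduce the whole statement to an injectivity assertion. Throughout, identify the maximal flat $F=Ap_0$ with a maximal Cartan subspace $\aa\cong\RR^k$, let $\|\cdot\|$ denote the Weyl group invariant polyhedral norm on $\aa$ induced by the Finsler metric, let $d_F$ be the associated flat metric on $F$, write $\overline{F}^{hor}$ for the horofunction compactification of $(F,d_F)$, and write $\overline{F}^X$ for the closure of $F$ inside $\overline{X}^{hor}$.

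First I would set up the restriction map. By the Cartan decomposition $G=K\exp(\overline{\aa^+})K$ together with the standard description of $G$-invariant Finsler distances, according to which $d(p_0,gp_0)=\|\mu(g)\|$ for the Cartan projection $\mu(g)\in\overline{\aa^+}$, one gets $d(x,y)=d_F(x,y)$ for all $x,y\in F$. Hence the restriction map $C_{p_0}(X)\to C_{p_0}(F)$, $h\mapsto h|_F$, is continuous and sends $h_z\mapsto h_z^F$ for $z\in F$; it therefore maps $\overline{F}^X$ into $\overline{F}^{hor}$, and I call the induced map $r$. Since $r$ is the identity on $F$ and is the restriction of a natural map, it automatically intertwines the actions of the stabilizer of $F$ in $G$ (in particular the Weyl group action). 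As $\overline{F}^X$ is compact and $\overline{F}^{hor}$ is Hausdorff, a continuous bijection between them is automatically a homeomorphism, so it remains to show that $r$ is bijective.

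Surjectivity is immediate: any $\psi\in\overline{F}^{hor}$ is a limit of $h_{z_n}^F=h_{z_n}|_F$ with $z_n\in F$, a subsequence of $(h_{z_n})$ converges in the compact space $\overline{X}^{hor}$ to some $h\in\overline{F}^X$, and $r(h)=\psi$ by continuity. The real content is injectivity: if $h,h'\in\overline{F}^X$ have the same restriction $\psi$ to $F$, then $h=h'$. Write $h=\lim_n h_{z_n}$, $h'=\lim_n h_{z_n'}$ with $z_n=\exp(V_n)p_0$, $z_n'=\exp(V_n')p_0$. If $(V_n)$ stays bounded then $h=h_z$ for some $z\in F$ and $\psi=h_z^F$ determines $z$, because $F$ embeds into $\overline{F}^{hor}$; the same applies to $(V_n')$, so after passing to subsequences I may assume $\|V_n\|,\|V_n'\|\to\infty$, that the normalized directions $V_n/\|V_n\|$ and $V_n'/\|V_n'\|$ converge, and that the supporting faces of the dual polyhedral ball at $V_n$, resp. $V_n'$, are eventually constant. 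Using $G$-invariance of $d$ and the distance formula above, for every $g\in G$ one has $h(gp_0)=\lim_n\bigl(\|\mu(g^{-1}\exp(V_n))\|-\|V_n\|\bigr)$, and similarly for $h'$.

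The engine of the injectivity proof is the asymptotic behaviour of the Cartan projection along a sequence going to infinity in $F$: for fixed $g$ and for $V_n\to\infty$ with controlled direction, $\mu(g^{-1}\exp(V_n))$ differs from $V_n$ by a bounded amount governed by a Busemann-type (Iwasawa) cocycle attached to the parabolic subgroup determined by the limit direction. Since the polyhedral norm is linear — equal to a fixed vertex functional of the dual ball — on the cone over each of its faces, this shows that $h(gp_0)$ depends on $g$ and on the sequence only through finitely many pairings of such vertex functionals with cocycle values, i.e. precisely through the data that parametrizes horofunctions of $(\aa,\|\cdot\|)$ in \cite{WalII,JS} (a face of the dual ball together with the associated shifts). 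Specializing to $g\in A$ one checks that $\psi=h|_F$ already encodes all of this data, so $\psi$ determines $h$ on all of $X$; since $h'$ yields the same $\psi$, this gives $h=h'$, and $r$ is the desired isomorphism. I expect the main obstacle to be making this last step precise for arbitrary — in particular singular — limit directions: one must control the Cartan projection asymptotics uniformly, identify correctly the parabolic (equivalently, the face of the flag manifold) attached to a degenerate direction, match its combinatorics with that of the faces of the polyhedral unit ball, and verify that the bookkeeping never produces a horofunction of $F$ in two ways nor misses one; in the regular case this reduces to the direct computation sketched above.
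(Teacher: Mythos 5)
Your reduction is sound as far as it goes: the restriction map $r\colon \overline{F}^X \to \overline{F}^{hor}$ is well defined and continuous because the $G$-invariant Finsler distance restricted to $F\times F$ is the norm distance on $\aa$ (Planche), surjectivity follows from compactness, and a continuous bijection from a compact space to a Hausdorff space is a homeomorphism. Your $r$ is exactly the inverse of the map $\phi$ that the paper constructs (the paper extends a flat horofunction of type $(I,a^I)$ to $X$ via the generalized horocyclic decomposition $X=a^IK^I(a^I)^{-1}N_IA\cdot p_0$, and injectivity of $\phi$ is the trivial direction since restriction is a left inverse). So the entire content of the theorem sits in your injectivity claim, which is equivalent to the paper's well-definedness of $\phi$: a limit of $\psi^X_{z_n}$, $z_n\in F$, must be determined by its restriction to $F$ even though the same flat horofunction can arise from sequences with different asymptotic types.

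That step is not proved in your proposal, and the mechanism you propose does not carry it. The assertion that $\mu(g^{-1}\exp(V_n))-V_n$ converges to (or is controlled by) an Iwasawa/Busemann cocycle attached to "the parabolic determined by the limit direction" is only straightforward for regular directions; in singular directions the correction is governed by the relative symmetric space $X^I=G^I/K^I$ and does not reduce to a single cocycle pairing, and the claim that "specializing to $g\in A$ one checks that $\psi=h|_F$ already encodes all of this data" is precisely what has to be shown. The delicate point is that the flat data (a face $E$ of $B^\circ$ and a point $p$) does not record the type $(I,a^I)$: two sequences in $F$ converging to the same flat horofunction may have different types, hence a priori different limiting parabolics and different candidate extensions, and one must prove these extensions coincide. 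The paper's proof of exactly this occupies Section~\ref{sec:well-definedness}: it uses the Convexity Lemma \ref{lem:convexity lemma} to interpolate between the two sequences, Chabauty convergence $a_nKa_n^{-1}\to a^IK^IM(a^I)^{-1}N_I$ (Proposition \ref{limits of K}) to extract invariance of the limit under $a^IK^IM(a^I)^{-1}N_I$ and under groups of the form $W^IA^{J'}$ (Lemmas \ref{lem:limit invariance}, \ref{lem:invariance when J is I}, \ref{lem:invariance when J not I}), the "smaller type" Lemma \ref{lem:eta smaller type}, and an induction on $|I|+|J|$ with a separate orthogonal case (Lemma \ref{lem:case J orthogonal in I}). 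Your proposal explicitly defers this ("I expect the main obstacle to be making this last step precise for arbitrary --- in particular singular --- limit directions"), so what is missing is not bookkeeping but the core argument; as written the proof is a correct framing plus an unproven claim where the theorem's difficulty lives.
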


Theorem~\ref{thm:flat_compactification} then allows us to prove Theorem~\ref{thm:satake} by comparing the closure of a maximal flat $F = A p_0 \subset X$ in $\overline{X}_d^{hor}$ with the closure  of $F$ in a generalized Satake compactification. The closure of a flat in a generalized Satake compactification is described in \cite{GuichardKasselWienhard}. 

In Section~\ref{sec:finsler}, we review the structure theory of symmetric spaces, and recall a characterization of $G$-invariant Finsler metrics. In Section~\ref{sec:horofunction}, we review the horofunction compactification of metric spaces, and focus on the case of normed vector spaces. Using work of the third author the  technical statement we need, the Convexity Lemma, for most norms. In Section~\ref{sec:flat to space}, we consider a $G$-invariant Finsler metric on the symmetric space $X$ which satisfies the Convexity Lemma. For each maximal flat $F$ in $X$, we prove that the closure of $F$ in the horofunction compactification of $X$ is isomorphic to the intrinsic horofunction compactification of $F$. So we reduce the study of horofunction compactifications of symmetric spaces to the study of horofunction compactifications of maximal flats. In Section~\ref{sec:satake}, we combine the previous results to prove that each (generalized) Satake 
compactification is a horofunction compactification for a specific polyhedral Finsler norm on the symmetric space.

\section{Invariant Finsler metrics on symmetric spaces} \label{sec:finsler}
In this section we first review the necessary structure theory of semisimple Lie groups, see \cite{Helg} for details,  and recall a characterization of $G$-invariant Finsler metrics due to Planche \cite{Plan}.

\subsection{Structure Theory} 

Throughout the article we denote by $G$ a real semisimple Lie group with finite center, and by $\frakg$ its Lie algebra. $K <G$ denotes a maximal compact subgroup, and $\frakk \subset \frakg$ its Lie algebra. 
The (Riemannian) symmetric space associated to $G$ is $X = G/K$, and $p_0 = eK$ denotes its base point. 

\subsubsection{Cartan decomposition} 
The Lie algebra of $G$ decomposes as 
\[
      \gg = \kk \oplus \pp,
\]
where $\pp$ is the orthogonal complement of $\kk$ with respect to the Killing form $\kappa$  on $\gg$. 

We fix a maximal abelian subalgebra $\aa \subset \pp$, and denote by $A= \exp (\aa)<G$ the corresponding connected subgroup of $G$. Then $ F= A\cdot p_0 \subset X$ is a maximal flat. 

All maximal abelian subalgebras are conjugate to each other, and  $\pp = \Ad(K) \aa = \bigcup_{k \in K}\Ad(k) \aa$.

Let $\Sigma = \Sigma(\gg, \aa)\subset \fraka^*$ denote the system of restricted roots, i.e. $\alpha \in \Sigma(\gg, \aa)$ if and only if 
\[ 
      \gg_{\alpha}:= \{ V\in \frakg\, |\, \ad(H)V = \alpha(H) V \ \forall H \in \aa\}
\]
is non-zero. 

For each $\alpha \in \Sigma$ consider the hyperplane $\ker(\alpha) \subset \aa$. Each of them divides the vector space $\aa$ into two half-spaces. The connected components of the set $\aa \setminus \bigcup_{\alpha \in \Sigma} \ker(\alpha)$ are called \emph{Weyl chambers}. We fix one of these chambers to be the positive Weyl chamber $\aa^{+}$, and define \emph{positive roots} by 
\[
      \Sigma^+ \coloneqq \{ \alpha \in \Sigma \| \alpha(H) > 0 \ \ \forall H \in \aa^+\}.
\]
We denote by  $\Delta$ the set of \emph{simple roots} :  
\[
      \Delta \coloneqq \{ \alpha \in \Sigma^+ \| \alpha(H) \text{ is not the sum of two positive roots}\}.
\]
The simple roots form a basis of $\Sigma$ in the sense that we can express every root as a linear combination of elements in $\Delta$ with integer coefficients which are either all $\geq 0$  or all $\leq 0$.

\begin{lem}[Cartan decomposition; \cite{Helg}, Thm.V.6.7 and Thm.IX.1.1] \label{cartan}
      Let $\aa^{+}$ be a positive Weyl chamber. Set $A^+ \coloneqq \exp(\aa^+) \subset G$ and denote by $\overline{A^+}$ its closure. Note that $\overline{A^+} = \exp (\overline{\aa^+})$.  
      For every element $g\in G$ there exist  $k_1, k_2 \in K$ and some $a \in \overline{A^+}$ such that $g = k_1 a k_2$. 

      We shortly write 
      \[
	  G = K\overline{A^+}K, 
      \]
      and call this a Cartan decomposition of $G$. 
\end{lem}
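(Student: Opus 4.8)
The plan is to deduce the lemma from two classical facts about semisimple Lie groups, both contained in \cite{Helg}: the global Cartan (polar) decomposition $G = K \exp(\pp)$, and the description of $\overline{\aa^+}$ as a fundamental domain for the Weyl group acting on $\aa$. First I would recall the global Cartan decomposition: the map $K \times \pp \to G$, $(k,X) \mapsto k\exp(X)$, is a diffeomorphism, so every $g \in G$ can be written $g = k \exp(X)$ with $k \in K$ and $X \in \pp$ (see \cite{Helg}). More geometrically, one can argue that $X = G/K$ with its invariant Riemannian metric is a complete, simply connected manifold of non-positive curvature, so by Cartan--Hadamard $\exp_{p_0}\colon \pp = T_{p_0}X \to X$ is a diffeomorphism; writing $g\,p_0 = \exp(X)\,p_0$ for the unique $X \in \pp$ gives $g \in \exp(\pp)\,K$, and applying this to $g^{-1}$ (equivalently, using the Cartan involution) yields $G = K\exp(\pp)$. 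This is the one genuinely substantive input.

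Next I would reduce from $\pp$ to $\overline{\aa^+}$. The excerpt already records $\pp = \Ad(K)\aa$. On $\aa$ the Weyl group $W = N_K(\aa)/Z_K(\aa)$ acts: it is generated by the reflections in the root hyperplanes $\ker(\alpha)$, $\alpha \in \Sigma$, and the closed positive Weyl chamber $\overline{\aa^+}$ is a fundamental domain, so every $W$-orbit in $\aa$ meets $\overline{\aa^+}$ (see \cite{Helg}). Since every element of $W$ is induced by $\Ad(k)$ for some $k \in N_K(\aa) \subset K$, for $H \in \aa$ one finds $n \in N_K(\aa)$ with $\Ad(n)H \in \overline{\aa^+}$, hence $H \in \Ad(K)\overline{\aa^+}$; thus $\aa \subset \Ad(K)\overline{\aa^+}$ and therefore
\[
   \pp = \Ad(K)\aa = \Ad(K)\,\overline{\aa^+}.
\]

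Finally I would assemble the statement. Given $g \in G$, write $g = k\exp(X)$ with $k \in K$, $X \in \pp$, and choose $k_2 \in K$ with $H := \Ad(k_2) X \in \overline{\aa^+}$. Using $k_2 \exp(X) k_2^{-1} = \exp(\Ad(k_2)X) = \exp(H)$, we get $\exp(X) = k_2^{-1}\exp(H) k_2$ and hence
\[
   g = k\, k_2^{-1} \exp(H)\, k_2 = k_1 a k_2, \qquad k_1 := k k_2^{-1} \in K,\ a := \exp(H).
\]
It remains to check $\overline{A^+} = \exp(\overline{\aa^+})$, so that $a \in \overline{A^+}$: since $\aa \subset \pp$, the restriction $\exp\colon \aa \to A$ is injective by the global Cartan decomposition, and it is a continuous bijection onto the closed abelian subgroup $A \cong \RR^k$ with continuous inverse, hence proper; a proper map carries closures of sets to closures of their images, so $\exp(\overline{\aa^+}) = \overline{\exp(\aa^+)} = \overline{A^+}$. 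This proves $G = K\overline{A^+}K$.

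The main obstacle is the global Cartan decomposition $G = K\exp(\pp)$ together with injectivity of $\exp$ on $\pp$; it is essentially equivalent to the Cartan--Hadamard theorem for the symmetric space $X$ and is the only step that is not Weyl-group bookkeeping. Everything after it — passing from $\pp$ to $\overline{\aa^+}$ via the Weyl group and conjugating $\exp(X)$ into $\overline{A^+}$ — is routine, and since the lemma is attributed to Helgason I would cite these inputs rather than reprove them.
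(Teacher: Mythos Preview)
Your argument is correct and follows the standard route: the global Cartan decomposition $G = K\exp(\pp)$, the conjugacy $\pp = \Ad(K)\aa$, and the fact that $\overline{\aa^+}$ is a fundamental domain for $W$ on $\aa$ combine exactly as you describe to yield $G = K\overline{A^+}K$. Note, however, that the paper does not give its own proof of this lemma at all; it is stated with a direct citation to Helgason (Thm.~V.6.7 and Thm.~IX.1.1), and those two theorems are precisely the ingredients you invoke. So your write-up is a faithful unpacking of the cited references rather than an alternative to anything in the paper.
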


\subsubsection{The Weyl group}
Let $\C_K(\aa) = \{ k \in K \| \Ad(k)(H) = H \ \forall H \in \aa\}$ denote the centralizer of $\aa$ in $K$ and 
$\N_K(\aa) = \{ k \in K \| \Ad(k) \aa \subset \aa\}$ its normalizer. Then $\C_K(\aa) \trianglelefteq \N_K(\aa)$ is a normal subgroup. 

\begin{defi}
      The quotient 
      \[
	  W\coloneqq \rquotient{\N_K(\aa)}{\C_K(\aa)}
      \]
      is the \emph{Weyl group}. It acts simply transitively on the set of Weyl chambers. The Weyl group is generated by the reflections in the hyperplanes $\ker(\alpha)$ for $\alpha \in \Delta$ and can also be expressed as $\N_K(A)/\C_K(A)$. 
\end{defi}

\subsection{Finsler Geometry}

A Finsler metric on a smooth manifold $M$ generalizes the concept of  Riemannian metric. It is a continuous family of  (possibly asymmetric) norms on the tangent spaces, which are not necessarily induced by an inner product. 

\begin{defi}
      Let $M$ be a smooth manifold. A \emph{Finsler metric} on $M$ is a continuous function 
      \[
	  F: TM \lora [0, \infty)
      \]
      such that, for each $p \in M$, the restriction $F|_{T_pM} : T_pM \lora [0,\infty)$ is a (possibly asymmetric) norm.
\end{defi}

The length and (forward) distance on a Finsler manifold can be defined in the same way as on a Riemannian manifold: 

\begin{defi}
      The \emph{length} of a curve $\gamma: [0,1] \subset \RR \lora M$ is defined as 
      \[
	  L(\gamma) \coloneqq \int_I F(\gamma(t), \dot{\gamma}(t))dt. 
      \]
      The \emph{forward distance} between two points $p, q \in M$ is given by
      \[
	  d_F(p,q) \coloneqq \inf_{\gamma}  L(\gamma),
      \]
      where the infimum is taken over all piecewise continuously differentiable curves $\gamma: [0,1] \lora M$ with $\gamma(0) = p$ and $\gamma(1) = q$.
\end{defi}

\begin{rem}
      As the homogeneity in the definition for a Finsler metric only holds for positive scalars, the norms on the tangent spaces do not have to be symmetric.  Therefore in general $d_F(p,q) \neq d_F(q,p)$. 
\end{rem}

The symmetric space $X$ carries a $G$-invariant Riemannian metric, which is essentially unique (up to scaling on the irreducible factors). However, $X$ carries many $G$-invariant Finsler metrics. Such $G$-invariant Finsler metrics on $X$ and their isometry groups have been investigated by Planche, who proved:  

\begin{thrm}[\cite{Plan}, Thm.6.2.1] \label{planche}
      There is a bijection between 
      \begin{itemize}
	  \item[i)] the $W$-invariant convex closed balls $B$ of $\aa$, 
	  \item[ii)] the $\Ad(K)$-invariant convex closed balls $C$ of $\pp$, 
	  \item[iii)] the $G$-invariant Finsler metrics on $X$.
      \end{itemize}
\end{thrm}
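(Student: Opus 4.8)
The plan is to exhibit the three sets as naturally in mutual bijection by composing two bijections, $\mathrm{(ii)}\leftrightarrow\mathrm{(iii)}$ and $\mathrm{(i)}\leftrightarrow\mathrm{(ii)}$. Throughout I identify a (possibly asymmetric) norm $N$ on a finite-dimensional vector space with its closed unit ball $\{N\le 1\}$, a compact convex set with $0$ in its interior; conversely any such set is the unit ball of a unique norm via its Minkowski gauge $v\mapsto\inf\{t>0:v\in tB\}$. So ``convex closed ball'' means exactly ``unit ball of a (possibly asymmetric) norm''.

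For $\mathrm{(ii)}\leftrightarrow\mathrm{(iii)}$: since $X=G/K$ is homogeneous with $\Stab(p_0)=K$ for $p_0=eK$, a $G$-invariant Finsler metric $F$ is determined by the norm $F|_{T_{p_0}X}$, which must be invariant under the isotropy representation of $K$. Under the canonical identification $T_{p_0}X\cong\pp$ coming from $\gg=\kk\oplus\pp$, the isotropy representation is $\Ad|_K$, so $F\mapsto F|_{T_{p_0}X}$ gives a bijection from $G$-invariant Finsler metrics on $X$ onto $\Ad(K)$-invariant norms on $\pp$. The inverse sends an $\Ad(K)$-invariant norm $N$ on $\pp\cong T_{p_0}X$ to the metric with $F(dL_g\,w):=N(w)$, which is well defined precisely because the ambiguity $g\mapsto gk$ $(k\in K)$ acts on $T_{p_0}X$ by $\Ad(k)$, and continuous because $TX$ is locally trivial over $X$ and $F$ is pulled back from $N$ through smooth maps. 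Passing to unit balls, $\mathrm{(iii)}$ is equivalently encoded by $\Ad(K)$-invariant convex closed balls of $\pp$.

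For $\mathrm{(i)}\leftrightarrow\mathrm{(ii)}$: the maps are $C\mapsto C\cap\aa$ and $B\mapsto\Ad(K)B=\bigcup_{k\in K}\Ad(k)B$. The set $C\cap\aa$ is compact convex with $0$ in its relative interior, and it is $W$-invariant because the image of $\N_K(\aa)$ in the transformation group of $\aa$ is exactly $W$ while $C$ is $\Ad(K)$-invariant. Conversely $\Ad(K)B$ is compact (continuous image of $K\times B$), $\Ad(K)$-invariant by construction, and a neighbourhood of $0$ in $\pp$ because each $\Ad(k)$ is orthogonal for the Killing form, so $\Ad(K)$ applied to a small $\aa$-ball about $0$ is a small $\pp$-ball about $0$. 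That the two maps are mutually inverse rests on two standard facts recalled above: $\pp=\Ad(K)\aa$, which gives $\Ad(K)(C\cap\aa)=C$; and the fact that two elements of $\aa$ that are $\Ad(K)$-conjugate are already $W$-conjugate, which together with $W$-invariance of $B$ gives $(\Ad(K)B)\cap\aa=B$. Composing with the previous step finishes the three-way bijection, so the only point that is not soft is the following.

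The main obstacle is that $\Ad(K)B$ is \emph{convex} whenever $B\subseteq\aa$ is convex and $W$-invariant; equivalently, the $\Ad(K)$-invariant extension $N_\pp$ of the $W$-invariant norm $N_\aa$ with unit ball $B$, defined by $N_\pp(\Ad(k)H):=N_\aa(H)$ for $H\in\overline{\aa^+}$ (unambiguous, since the $\overline{\aa^+}$-representative of a point of $\pp$ is unique), is subadditive. I would deduce this from Kostant's linear convexity theorem, $\pr_\aa(\Ad(K)H)=\conv(W\cdot H)$ for $H\in\overline{\aa^+}$. Write $N_\aa$ as a support function $N_\aa(x)=\max_{\xi\in P}\langle\xi,x\rangle$, where $P\subseteq\aa$ is the $W$-invariant compact convex polar of $B$ and $\langle\cdot,\cdot\rangle$ is the (positive definite) restriction of the Killing form to $\aa$. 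Then for $v=\Ad(k)H$ with $H\in\overline{\aa^+}$ one computes
\[
\max_{\eta\in\Ad(K)P}\langle\eta,v\rangle=\max_{\xi\in P}\max_{\ell\in K}\langle\pr_\aa(\Ad(\ell)\xi),H\rangle=\max_{\xi\in P}\max_{w\in W}\langle w\xi,H\rangle=\max_{\xi\in P}\langle\xi^+,H\rangle=N_\aa(H)=N_\pp(v),
\]
where $\xi^+$ denotes the dominant representative of $\xi$ in its Weyl orbit: the first equality uses $\langle\eta,\Ad(k)H\rangle=\langle\Ad(k^{-1})\eta,H\rangle$, a reparametrisation of $K$, and $\langle\pr_\aa(\cdot),H\rangle=\langle\cdot,H\rangle$ on $\pp$ since $H\in\aa$; the second is Kostant's theorem; the third holds because $\langle\cdot,H\rangle$ is maximised over a Weyl orbit at the dominant element when $H$ is dominant (then $\langle H,\alpha\rangle\ge 0$ for every simple root $\alpha$). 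Hence $N_\pp$ is the support function of the compact set $\Ad(K)P$, so it is convex, and its unit ball — which one checks equals $\Ad(K)B$, using again that $\Ad(K)$-conjugate elements of $\aa$ are $W$-conjugate and that $N_\aa$ is $W$-invariant — is convex; closedness, boundedness, and $0$ in the interior are as above. I expect this Kostant-based computation, together with the bookkeeping of dominant representatives, to be essentially the whole mathematical content; everything else is routine homogeneous-space geometry and elementary convex analysis.
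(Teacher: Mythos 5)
The paper does not prove this statement; it is quoted verbatim from Planche's thesis (\cite{Plan}, Thm.~6.2.1), so there is no in-paper argument to compare against. Your proof is correct and complete, and it follows the standard (indeed Planche's own) route: the correspondence (ii)$\leftrightarrow$(iii) is routine homogeneous-space bookkeeping via the isotropy representation $\Ad|_K$ on $\pp\cong T_{p_0}X$, the maps $C\mapsto C\cap\aa$ and $B\mapsto\Ad(K)B$ are mutually inverse by $\pp=\Ad(K)\aa$ together with the fact that $\Ad(K)$-conjugate elements of $\aa$ are $W$-conjugate, and the only genuinely nontrivial point --- convexity of $\Ad(K)B$ --- is exactly where Kostant's linear convexity theorem is needed; your support-function computation identifying $N_\pp$ with the support function of $\Ad(K)P$ handles this correctly, including the reduction to dominant representatives.
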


In particular, any $G$-invariant Finsler metric on $X$ gives rise to a (not necessarily symmetric) norm on the vector space $\aa$, whose unit ball is the $W$-invariant convex ball $B$, and it is in turn completely determined by this norm. 
\begin{defi}
      A $G$-invariant Finsler metric on $X$ is said to be \emph{polyhedral} if its $W$-invariant convex ball $B$ in $\aa$ is a finite sided polytope. 
\end{defi}

\section{Horofunction compactifications} \label{sec:horofunction}

The horofunction compactifications of normed vector spaces have been described by Walsh in \cite{WalI}. We will not give a full description of his work, but focus on the case when the convex ball $B$ is a finite sided polytope. In this setting the horofunction compactification has been described in detail by Ji and Schilling in \cite{JS}, see also \cite{KarlssonMetzNoskov} for a description of horoballs. 

The key results for us are a characterization of converging sequences, a convexity lemma, and the identification between the horofunction compactification and the dual convex polytope $B^\circ$.

\subsection{The horofunction compactification of a metric space} \label{subsec:defi_of_horofunction}

 Let $(X,d)$ be a metric space whose metric is possibly non-symmetric and with its topology induced by the symmetrized distance 
 \[
      d_{sym}(x,y) \coloneqq d(x,y) + d(y,x)
 \] 
 for all $x,y \in X$. Let $C(X)$ be endowed with the topology of uniform convergence on bounded sets with respect  to $d_{sym}$. Fix a basepoint $p_0 \in X$. Let $C_{p_0}(X)$ be the set of continuous functions on $X$ which vanish at $p_0$. This space is homeomorphic to the quotient of $C(X)$ by constant functions, $\widetilde{C}(X) \coloneqq \rquotient{C(X)}{const}$. Define the map
\begin{align} \label{Psi}
      \begin{split}
	  \psi: X &\lora \widetilde{C}(X) \\
	  z &\longmapsto \psi_z
      \end{split}
 \end{align}
 where 
 \[
      \psi_z(x) = d(x,z) - d(p_0, z)
 \]
 for all $x \in X$. Then $\psi$ is continuous and injective. If $X$ is geodesic, proper with respect to $d_{sym}$ and if $d$ is symmetric with respect to convergence, that is,  $d(x_n,x) \ra 0 \ \text{ iff } \ d(x, x_n) \ra 0$ for any sequence $(x_n)_{n \in \NN}$ and some $x \in X$, then the closed set $\cl\{\psi_z \| z \in X\}$ is compact and $\psi$ is an embedding of $X$ into $\widetilde{C}(X)$. For more details see \cite[p.4 and Prop. 2.2]{WalI}. 

\begin{defi}
      The \emph{horofunction boundary} $\del_{hor}(X)$ of $X$ in $\widetilde{C}(X)$ is defined as 
      \[
	  \del_{hor} X \coloneqq \left ( \cl\{\psi_z \| z \in X\} \right) \setminus \{\psi_z \| z \in X\}.
      \]
      Its elements are called \emph{horofunctions}. If $\cl\{\psi_z \| z \in X\}$ is compact, then the set \mbox{$\xhor \coloneqq \cl\{\psi_z \| z \in X\} = X \cup \del_{hor} X$} is called the \emph{horofunction compactification} of $X$. 
\end{defi}

\begin{rem}\label{basepoint}
      The definition of $\psi_z$ and therefore also those of $\psi$ and  $\del_{hor} X$ depend on the choice of the basepoint $p_0$. One can show by a short calculation that, if we choose an alternative basepoint, the corresponding boundaries are homeomorphic. 
\end{rem}

\subsection{Horofunction compactifications of normed vector spaces}

Let in the following $(V, \lVert \cdot \rVert)$ always denote a finite-dimensional normed space and let $\langle \cdot | \cdot \rangle$ denote the dual pairing on it. 
\begin{notation}
      For a subset $C \subset V$ we denote by $V(C) \subset V$ the subspace generated by $C$ and by $V(C)^\bot$ its orthogonal complement, for some fixed arbitrary Euclidean structure on $V$. The orthogonal projection to $V(C)$ will be denoted by $\Pi_C$. For an element $v \in V$ we write $v = v_C + v^C$ with $v_C \in V(C)$ and $v^C \in V(C)^\bot$.
\end{notation}
 As the norm might be asymmetric, note that we use the convention 
      \[
	  d(x,z) = \lVert z - x \rVert. 
      \]

\begin{defi}
      Let $B$ be the unit ball with respect to the norm $\lVert \cdot \rVert$. The \emph{dual unit ball} $\Bo$ of $B$ is defined to be the polar of $B$:
      \[
	  \Bo \coloneqq \{ y \in V^* \| \langle y | x \rangle \geq -1 \ \ \forall x \in B \},
      \]
      where $V^*$ denotes the dual space of $V$. 
\end{defi}

\begin{rem}
      Some authors define the polar and therefore the dual unit ball by the condition $\langle y | x \rangle \leq 1 \ \ \forall x \in B$. As long as $B$ is symmetric, this makes as a set no difference. 
\end{rem}

\begin{defi}
A convex subset $E$ of a convex set $C$ is called \emph{extreme} if the endpoints of any line segment in $C$ are contained in $E$ whenever any interior point of the line segment is. 
\end{defi}

\begin{defi}
      The \emph{relative interior} $\ri S$ of a set $S \subset V$  is the interior of $S$ when $S$ is seen as a subset of a minimal affine subspace of $V$ containing $S$.   
      Similary, the \emph{relative boundary} $\delrel S$ of $S$ is the boundary of $S$ seen in the minimal affine subspace containing $S$.
\end{defi}

In the following we will consider norms $\norm$ which have a polyhedral unit ball $B$, that is, $B$ is a polytope. We will always assume our polytope to be finite, bounded and convex. The extreme sets of a polytope are its faces. Such a polytope can be described in two ways: either as the convex hull of finitely many points or as the intersection of finitely many halfspaces. The interplay of them is strongly related to the relation between the unit ball $B$ and its dual $\Bo$: 

\begin{rem} \label{rem:Bo}
      Let the unit ball $B \subset V$ be given as the convex hull of a finite set of points, $B = \conv\{a_1, \ldots, a_k\}$. In this description we want all points $a_i$ to be extremal, i.e. $\conv\{a_1, \ldots, a_k \} \neq \conv\{a_1, \ldots, \overset{\wedge}{a_j}, \ldots, a_k\}$ for all $j \in \{1, \ldots, k\}$. Then each vertex $a_i \in V$ determines a halfspace $V_i \coloneqq  \{ y \in V^* \| \langle y | a_i \rangle \geq -1 \} \subset V^*$ which contains the origin in its interior. The boundary $H_i$ of such a halfspace is a hyperplane for which it holds $\langle H_i | a_i \rangle = -1$, that is, $\langle h_i | a_i \rangle  = -1 $ for all $h_i \in H_i$. Then the dual unit ball $B^\circ$ is given by:
      \begin{align*} 
	  B^\circ &= \bigcap_{i = 1}^k V_i \\
	  &= \{ y \in V^* \| \langle y | a_i \rangle \geq -1 \ \ \forall i = 1, \ldots, k\}.
      \end{align*}
      As the unit ball $B$ is closed convex and contains the origin as an interior point, we know by the theory of polars and convex sets that 
      \[
	  (B^\circ)^\circ = B.
      \]
      Therefore, if $B$ is given as the intersection of halfspaces, with this result we can easily determine $B^\circ$ as the convex hull of a set of points. Starting from this description, we only want to consider relevant halfspaces in the intersection, that is, $B \cap \del V_i$ is a $m-1$ dimensional face\footnote{A \emph{k-face} of a polytope $P = \conv\{p_1, \ldots, p_r\}$ is the $k$-dimensional intersection of $P$ with one or more hyperplanes $H_i$ ($i \in \{1, \ldots, r\}$) defining $P$. } of $B$, which we will also call a \emph{facet}.
\end{rem}

Based on these two descriptions there is a one-to-one correspondence between the faces of $B$ and those of $\Bo$: 

\begin{lem}
      Let $B \subset V$ be a polyhedral unit ball and $\Bo \subset V^*$ its dual. For a face $F \subset B$ define its \emph{dual set} by
      \[
	  F^\circ \coloneqq \{ y \in \Bo \| \langle y | x \rangle = -1 \ \forall x \in F\} \subset \Bo.
      \]
      Then $F^\circ$ is a face of $\Bo$ and it holds 
      \[ 
	  \dim F + \dim F^\circ = m-1.
      \]
\end{lem}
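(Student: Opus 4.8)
The plan is to prove this via the standard duality theory for polar polytopes, using the well-known correspondence between faces of a polytope and faces of its polar (see e.g. Ziegler's book on polytopes), adapted to the sign conventions in use here where $\Bo = \{y : \langle y|x\rangle \ge -1 \ \forall x \in B\}$. First I would verify that $F^\circ$ is indeed a face of $\Bo$: writing $F = \conv\{a_{i_1},\dots,a_{i_r}\}$ for some subset of the vertices of $B$, I claim $F^\circ = \bigcap_{j=1}^r (\Bo \cap H_{i_j})$, where $H_i = \{y : \langle y | a_i\rangle = -1\}$ is the supporting hyperplane from Remark~\ref{rem:Bo}. Each $\Bo \cap H_i$ is a face of $\Bo$ since $H_i$ is a supporting hyperplane of $\Bo$ (because $\langle y | a_i \rangle \ge -1$ holds for all $y \in \Bo$ by definition of the polar, with equality attained), and a finite intersection of faces of a polytope is again a face. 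The reverse inclusion uses that $\langle y | x \rangle = -1$ for all $x \in F$ is equivalent to $\langle y | a_{i_j}\rangle = -1$ for each vertex $a_{i_j}$ of $F$, since $\langle y | \cdot \rangle$ is affine and $F$ is the convex hull of these vertices, while $\langle y|x\rangle \ge -1$ automatically holds on all of $B \supseteq F$.

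For the dimension formula I would argue as follows. Pass to the smallest affine subspace, equivalently the linear span $V(F)$ of $F$ together with the origin; since $B$ contains $0$ in its interior, $\conv(F \cup \{0\})$ has dimension $\dim F + 1$, and by homogeneity the $\langle \cdot | x\rangle = -1$ conditions defining $F^\circ$ only depend on $x$ through its class modulo scaling, so in fact $F^\circ = \{y \in \Bo : \langle y | x\rangle = -1 \ \forall x \in \spann(F)\cap(\text{affine hull of }F)\}$ — more cleanly, $y \in F^\circ$ iff $\langle y|\cdot\rangle \equiv -1$ on the affine hull $\mathrm{aff}(F)$, which is an affine subspace of dimension $\dim F$ not through the origin. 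The space of linear functionals $y$ that are constant (equal to $-1$, say) on a $d$-dimensional affine subspace not containing $0$ is an affine subspace of $V^*$ of dimension $m - 1 - d$: indeed, fixing the value $-1$ at $\dim F + 1$ affinely independent points of $\mathrm{aff}(F)$ imposes $\dim F + 1$ independent affine conditions on $V^* \cong \RR^m$. Intersecting this affine flat of dimension $m - 1 - \dim F$ with $\Bo$, and checking (via $(\Bo)^\circ = B$ and the fact that $F$ is an honest face, so its relative interior meets no proper face of $B$) that this intersection is full-dimensional inside that flat, gives $\dim F^\circ = m - 1 - \dim F$, which is the claim.

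I would organize the write-up in two parts: (1) $F^\circ$ is a face, via the supporting-hyperplane and intersection argument above; (2) the dimension count, via the affine-flat computation plus a non-degeneracy check. The main obstacle is part (2): one must be careful that the flat cut out by the equality constraints actually meets $\Bo$ in something of the expected dimension and is not, say, contained in the relative boundary in a degenerate way. The clean way to handle this is the symmetric bidual argument: apply the (already-established) "face" half of the statement to $F^\circ \subset \Bo$ to get a face $(F^\circ)^\circ \subset (\Bo)^\circ = B$, show $F \subseteq (F^\circ)^\circ$ directly from the definitions, then deduce equality $F = (F^\circ)^\circ$ (two faces of $B$ with the same relative interior, or using that $F$ is exposed since $B$ is polyhedral), and finally combine $\dim F + \dim F^\circ \le m-1$ with the dual inequality $\dim F^\circ + \dim(F^\circ)^\circ \le m - 1$ and $\dim(F^\circ)^\circ = \dim F$ to force equality throughout. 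This sidesteps the delicate non-degeneracy estimate by playing the two inequalities against each other.
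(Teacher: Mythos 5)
Your part (1) is fine and essentially equivalent to the paper's argument (the paper checks directly that $F^\circ$ is an extreme set; your route via intersecting the exposed faces $\Bo\cap H_i$ is a harmless variant). The problem is part (2). Your affine-flat computation only yields the \emph{upper} bound $\dim F^\circ\le m-1-\dim F$, since it shows $F^\circ$ is contained in a flat of that dimension; the whole content of the lemma is the matching lower bound, i.e.\ that $F^\circ$ is full-dimensional in that flat. Your proposed sidestep does not deliver it: after substituting $\dim(F^\circ)^\circ=\dim F$, the ``dual inequality'' $\dim F^\circ+\dim(F^\circ)^\circ\le m-1$ becomes literally the same inequality $\dim F+\dim F^\circ\le m-1$ again, and two copies of the same upper bound cannot be ``played against each other'' to force equality. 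So the non-degeneracy check that you yourself identify as the main obstacle is never actually carried out, and the proof as sketched is incomplete at exactly the decisive point.

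The paper closes this gap by an explicit perturbation, which is the ingredient you are missing: choose $z\in V^*$ with $F=B\cap\{x: \langle z\,|\,x\rangle=-1\}$ and $\langle z\,|\,x\rangle>-1$ on $B\setminus F$ (possible since $F$ is a face of the polytope $B$), so $z\in F^\circ$; then for any $y\in (V(F)^\bot)^*$ and $\varepsilon>0$ small one has $\langle z+\varepsilon y\,|\,a_i\rangle=-1$ for the vertices $a_i\in F$ and $\langle z+\varepsilon y\,|\,a_j\rangle>-1$ for the remaining vertices, hence $z+\varepsilon y\in F^\circ$. This shows $V(F^\circ-z)\supseteq (V(F)^\bot)^*$, which is precisely the lower bound, and the dimension count follows. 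If you prefer to stay within your biduality framework, a correct repair is combinatorial rather than a pair of inequalities: establish $(G^\circ)^\circ=G$ for \emph{all} faces $G$ (not just $F$), so that $G\mapsto G^\circ$ is injective and strictly inclusion-reversing, take a maximal chain $F=G_0\subsetneq G_1\subsetneq\cdots\subsetneq G_{m-\dim F}=B$ in the face lattice of $B$, and dualize it to a strictly decreasing chain of faces of $\Bo$ ending at $F^\circ$ and starting at the empty face; since each strict inclusion of faces drops the dimension by at least one, this gives $\dim F^\circ\ge m-1-\dim F$. Either of these supplies the missing half; without one of them the claimed equality is not proved.
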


\begin{proof}
      To show that $F^\circ$ is a face of $\Bo$ we have to show that it is an extreme set of $\Bo$. Recall that $F^\circ \subset \Bo$ is an extreme set, if some interior point of a line in $\Bo$ lies in $F^\circ$, then also both endpoints of the line. Therefore let $y \in F^\circ$ and $y_1, y_2 \in \Bo$ such that $y = \frac{1}{2}(y_1 + y_2) \in F^\circ$. For any $x \in F$ we have 
      \begin{align*}
	  -1 = \langle y | x \rangle = \frac{1}{2}(\langle y_1 | x \rangle + \langle y_2 | x \rangle) \geq -1,
      \end{align*}
      as both $y_1, y_2 \in \Bo$. Equality holds if and only if $\langle y_1 | x \rangle = \langle y_2 | x \rangle = -1$ and therefore $y_1, y_2 \in F^\circ$.
      
      We now show the formula for the dimensions. Let $V(F) \subset V$ be the subspace generated by $F$. We show that the dual $(V(F)^\bot)^* \subset V^*$ of its orthogonal complement is parallel to the affine subspace generated by $F^\circ$. As $F \subset B$ is part of a hyperplane defining $B$, we can find $\tilde{z} \in V^*$ and $r \in \RR$ such that $F = \{ y \in V | \langle \tilde{z} | y \rangle = r\} \cap B$ and $\langle \tilde{z} | y \rangle > r$ for all $ y \in B \setminus F$. As the origin is in the interior of $B$, $r < 0$. Set $z \coloneqq \frac{1}{|r|} \tilde{z}$, then $z \in F^\circ$.  
      We claim that 
      \[
	  (V(F)^\bot)^* = V(F^\circ - z).
      \]
      To see this, let $B = \conv\{a_1, \ldots, a_k\}$,  and let $S_F \subset \{1, \ldots, k\}$ be those indices such that  $F = \conv\{a_i | i \in S_F\}$. Let $y \in (V(F)^\bot)^*$ and $\varepsilon >0$.  Then $\langle z + \varepsilon y | a_i \rangle = -1$ for all $i \in S_F$ and $\langle z + \varepsilon y | a_j \rangle > -1 + \varepsilon \langle y | a_j \rangle$ for all $j \notin S_F$. As $B$ has only finitely many vertices $a_i$ ($i \in \{1, \ldots, k\}$), we can choose $\varepsilon$ small enough such that $\langle z + \varepsilon y | a_j \rangle > -1$ for all $j \notin S_F$. This implies $z + \varepsilon y \in F^\circ$  and therefore $y \in \frac{1}{\varepsilon}(F^\circ - z) \subset V(F^\circ - z)$. 
      The other inclusion follows immediately as $\langle x | y \rangle = -1$ for all $x \in F^\circ, y \in F$. \\
      With $\dim(F) = \dim(V(F)) - 1$ (as $0 \notin F$) and $\dim(V(F^\circ - z)) = \dim(F^\circ)$ (because $0 \in F^\circ-z$) we obtain 
      \begin{align*}
	  \dim(V) = \dim(V(F)) + \dim(V(F)^\bot) = \dim(F) + 1 + \dim(F^\circ),
      \end{align*} 
      which finishes the proof.
\end{proof}

In the case of a finite-dimensional normed space with polyhedral norm, Walsh gives a criterion (\cite[Thm.1.1 and Thm.1.2]{WalII}) to calculate the horofunctions  explicitely by using the Legendre-Fenchel transform of some special map. We rewrite these functions using some kind of pseudo-norm, see \cite[p.5]{WalII} or \cite{Schilling} for more details:

\begin{defi}
      Let $C \subset V^*$ be a convex set. For $p \in V$ we set 
      \[
	  |p|_C := -\inf_{q\in C} \langle q | p \rangle.
      \]  
\end{defi}

\begin{rem} \label{rem:notanorm}
      $\lvert \cdot \rvert_C$ is in general not a norm but 
      \[
	  \lvert \cdot \rvert_{\Bo} = \norm.
      \]

\end{rem}

Now we define the functions that will turn out to be the horofunctions in the horofunction compactification of $V$ with respect to the norm with unit ball $B$. Let $E \subset \Bo$ be a face of $\Bo$ and $p \in V(E^\circ)^\bot$ be a point. Then we define
\begin{align*}
      h_{E,p}: V &\lora \RR \\
      y &\longmapsto \lvert p - y \rvert_E - \lvert p \rvert_E.
\end{align*}
A short calculation shows that only the orthogonal part of $p$ makes a contribution: 
\[
      h_{E, p} = h_{E, p^F}, 
\]
with $F = E^\circ$ and $p^F  \in V(F)^\bot$. 
If we choose not a proper face $E$ but the entire dual unit ball, we get by Remark \ref{rem:notanorm} that 
\[
     \psi_z = h_{\Bo, z} 
\]
for all $z \in V$.

Combining the results of Walsh with some calculations that can be found in \cite{JS} we obtain

\begin{thrm}[\cite{WalI}, Thm.1.1 , \cite{JS}, p.10]
      Let $(V, \norm)$ be a finite-dimensional normed space where the unit ball $B$ is a polytope. Then the set of horofunctions is given as
      \[
	  \del_{hor}(V) = \{ h_{E,p} | E \subset B \text{ is a proper face and } p \in V(E^\circ)^\bot\}.
      \]
\end{thrm}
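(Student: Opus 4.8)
The plan is to prove the asserted equality of sets by two inclusions, and the cleanest route is to lean on Walsh's description of the horofunction boundary of a finite–dimensional normed space \cite[Thm.~1.1]{WalI}: each horofunction is the Legendre--Fenchel transform of an explicit convex function attached to a pair consisting of an extreme set of $\Bo$ and a transverse vector. The theorem then amounts to checking that, when $B$ is a polytope, these functions are exactly the $h_{E,p}$ — i.e.\ to rewriting Walsh's formula in the notation of the pseudonorms $\lvert\cdot\rvert_E$, which is precisely the computation on \cite[p.~10]{JS}. Below I sketch the self-contained argument behind both inclusions, using throughout that $\lVert w\rVert = \lvert w\rvert_{\Bo} = -\inf_{q\in\Bo}\langle q|w\rangle$ and, for a nonempty proper face $E\subsetneq\Bo$ with dual face $F\coloneqq E^\circ\subsetneq B$, that $h_{E,p}(y)=\lvert p-y\rvert_E-\lvert p\rvert_E$.

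\emph{Every $h_{E,p}$ is a horofunction.} Fix such an $E$, put $F=E^\circ$, pick $p\in V(F)^\bot$ and a point $c$ in the relative interior of $F$, and set $z_n\coloneqq nc+p$. Since $c\in F\subset B$ and $c\in\ri F$, the affine form $\langle\,\cdot\,|c\rangle$ satisfies $\langle q|c\rangle\ge -1$ on $\Bo$ with equality exactly on $F^\circ=E$ — an affine form bounded below on a polytope and attaining its minimum at a relative interior point of a face is constant on that face, which is the face/polar duality behind the preceding lemma. As $c\neq 0$, $\lVert z_n\rVert\to\infty$, so no subsequence of $(z_n)$ is bounded; since $\psi$ is a proper embedding (cf.\ \cite[Prop.~2.2]{WalI}), every accumulation point of $(\psi_{z_n})$ lies in $\del_{hor}(V)$. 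It remains to show $\psi_{z_n}\to h_{E,p}$, and pointwise convergence suffices because all $\psi_{z_n}$ and $h_{E,p}$ are $1$-Lipschitz for $d_{sym}$. For fixed $y$,
\[
      \psi_{z_n}(y)=-\inf_{q\in\Bo}\bigl(n\langle q|c\rangle+\langle q|p-y\rangle\bigr)+\inf_{q\in\Bo}\bigl(n\langle q|c\rangle+\langle q|p\rangle\bigr),
\]
and one checks — this is the point where one must use that $c$ lies in the relative interior of the dual face, to control the infimum near $E$ — that for $n$ large each infimum is attained on $E$, hence equals $-n$ plus the infimum of the remaining linear term over $E$. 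Therefore $\psi_{z_n}(y)\to\lvert p-y\rvert_E-\lvert p\rvert_E=h_{E,p}(y)$.

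\emph{Every horofunction has this form.} Let $h=\lim_n\psi_{z_n}\in\del_{hor}(V)$, so $\lVert z_n\rVert\to\infty$. For each $n$ let $E_n\subseteq\Bo$ be the face on which $q\mapsto\langle q|z_n\rangle$ attains its minimum $-\lVert z_n\rVert$; since $\Bo$ has finitely many faces we pass to a subsequence with $E_n\equiv E$, a proper face because $z_n\neq 0$. Write $F=E^\circ$ and decompose $z_n=p_n+c_n$ with $p_n\in V(F)^\bot$, $c_n\in V(F)$. I would then establish: (i) $(p_n)$ is bounded, so a further subsequence gives $p_n\to p\in V(F)^\bot$; and (ii) $\psi_{z_n}\to h_{E,p}$. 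For (ii), since $c_n$ escapes to infinity through directions whose exposed face is $E$, the infimum computing $\lVert z_n-y\rVert$ stabilizes onto $E$ exactly as in the previous paragraph, now with $p_n$ playing the role of $p$; letting $n\to\infty$ and using $p_n\to p$ yields $\psi_{z_n}(y)\to h_{E,p}(y)$. For (i) one argues by contraposition: if $\lVert p_n\rVert\to\infty$ one locates a point $y$ at which $\psi_{z_n}(y)$ fails to converge. The quantitative forms of (i) and (ii) are the calculations of \cite{WalII} and \cite[p.~10]{JS}.

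The first inclusion is essentially a direct computation; the whole difficulty sits in the second. The subtle point is that one must \emph{not} normalize $z_n$ and pass to a limit direction, because the face of $\Bo$ exposed by $z_n/\lVert z_n\rVert$ can jump to a strictly larger face in the limit. Instead one works with the face $E_n$ that actually attains the minimum — genuinely constant along a subsequence — and must uniformly control the gaps $\langle q|z_n\rangle+\lVert z_n\rVert$ for $q\notin E$ in order to see that the limiting horofunction depends only on $E$ and on the bounded transverse part $p=\lim p_n$. Making this uniform is the main obstacle, and it is exactly where polyhedrality of $B$ (finiteness of the face lattice, together with the tame behavior of affine forms near faces) is used.
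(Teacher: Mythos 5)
Your first inclusion is fine: for a fixed $c$ in the relative interior of $F=E^\circ$ the linear forms $q\mapsto\langle q\,|\,c\rangle$ are bounded below by $-1+\delta$ on the (finitely many) vertices of $\Bo$ outside $E$, so the infima defining $\psi_{nc+p}$ are attained on $E$ for large $n$ and $\psi_{nc+p}\to h_{E,p}$. The genuine gap is in the converse direction. Your recipe is: pass to a subsequence along which the face $E_n$ of $\Bo$ on which $\langle\cdot\,|\,z_n\rangle$ attains its minimum is a constant face $E$, put $F=E^\circ$, $z_n=c_n+p_n$ with $c_n\in V(F)$, $p_n\in V(F)^\bot$, and claim $\psi_{z_n}\to h_{E,\lim p_n}$. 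But the argmin face of $\langle\cdot\,|\,z_n\rangle$ equals $E$ exactly when $z_n$ lies in the relative interior of the cone $K_F\subset V(F)$, so along your subsequence $p_n\equiv 0$ and your step (ii) would force the limit to be $h_{E,0}$ --- which is false in the paper's own $\ell^1$ example (Example \ref{ex:viereck2} and the example after Theorem \ref{mainthrmbusem}). Take $V=\RR^2$ with the $\ell^1$ norm and $z_n=(n,1)$: for every $n$ the minimum of $\langle q\,|\,z_n\rangle$ over $\Bo=[-1,1]^2$ is attained only at the vertex $b_1=(-1,-1)$, so $E_n\equiv\{b_1\}$, $F=L_{b_1}$, $V(F)^\bot=\{0\}$, and your output is $h_{\{b_1\},0}(y)=\langle b_1\,|\,y\rangle=-y_1-y_2$; however $\psi_{z_n}(y)=|n-y_1|+|1-y_2|-(n+1)\to -y_1+|1-y_2|-1=h_{H_{a_1},(0,1)}(y)$, the horofunction attached to the strictly larger face $H_{a_1}\supsetneq\{b_1\}$ of $\Bo$ and the nonzero transverse vector $p=(0,1)$ (check at $y=(0,2)$: the true limit is $0$, your formula gives $-2$). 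So the face where the minimum is attained does not determine the limit, and the transverse part cannot be read off as the component of $z_n$ orthogonal to $V(E_n^\circ)$.

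The remark in your last paragraph does not repair this: the face exposed by $z_n/\lVert z_n\rVert$ \emph{is} the argmin face of $\langle\cdot\,|\,z_n\rangle$ (exposure is scale invariant), so ``working with $E_n$ instead of normalizing'' is not an alternative; moreover the jump to a larger face in the limit direction is sometimes exactly what the correct answer requires ($z_n=(n,1)$ above) and sometimes not ($z_n=(n,\sqrt n)$ converges to $h_{\{b_1\},\cdot}$ even though $z_n/\lVert z_n\rVert\to(1,0)$ exposes $H_{a_1}$). The correct selection of the face is the content of Theorem \ref{mainthrmbusem} ([JS], Thm.~3.10): the relevant $F\subset B$ is the one for which $z_{n,F}\in K_F$, $d(z_{n,F},\delrel K_F)\to\infty$, and $z_n^F\to p$; i.e.\ the face is determined by which facial directions diverge and which components stay bounded, not by where the linear functional attains its minimum. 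Note also that the paper does not reprove the statement --- it quotes Walsh and the computation in \cite{JS} --- so deferring ``the quantitative forms of (i) and (ii)'' to those references does not close the gap: what they prove is the criterion just described, not your claims (i)--(ii), and (ii) as stated is false. (A small side point: in the statement $E$ should be a proper face of $\Bo$, not of $B$, as your write-up correctly assumes.)
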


\begin{ex} \label{ex:viereck2}
 As an example let us consider $\RR^2$ equipped with the $L^1$-norm. For notations of the faces see Figure \ref{fig: example_B_Bo}. 

 \begin{figure}[h!]
	\centering
	\includegraphics{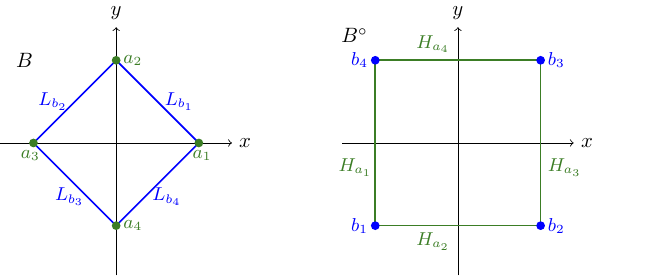}
	\caption{The unit ball $B$ and its dual $\Bo$}\label{fig: example_B_Bo}
\end{figure}

 Then its unit ball is given as the convex set 
\newpage
 \begin{align*}
      B &= \conv \big\{(1,0), (0,1), (-1,0), (0, -1) \big\} = \conv\{a_i | i = 1, \ldots, 4\} \\
      &= \bigcap_{i = 1}^4 \{ x \in \RR^2 \| \langle b_i | x \rangle  \geq -1 \}
 \end{align*}
 with $b_1 = (-1,-1), b_2 = (1, -1), b_3 = (1,1)$ and  $b_4 = (-1,1)$. 
 By Remark \ref{rem:Bo} the dual unit ball is given by
 \begin{align*}
      \Bo &= \bigcap_{i = 1}^4 \{y \in \RR^2 \| \langle y | a_i \rangle \geq -1 \} \\
      &= \conv\{b_1, b_2, b_3, b_4\}.      
 \end{align*}
 The faces of $B$ are $\F = \{ a_i, L_{b_i} | i = 1, \ldots, 4 \}$ where the facets are given by $L_{b_i} \coloneqq \{ x \in \RR^2 | \langle b_i | x \rangle = -1 \} \cap B$. Similary, the faces of $\Bo$ are \mbox{$\E = \{ b_j, H_{a_j} | j = 1, \ldots, 4 \}$} with $H_{a_j} \coloneqq \{ y \in \RR^2 | \langle y | a_j \rangle = -1 \} \cap \Bo$. \\
 As indicated by the notation, the dual faces are:
 \begin{align*}
      \{a_j\}^\circ &= H_{a_j}\\
      (L_{b_i})^\circ &= \{b_i\}.
 \end{align*}
The horofunctions then are for example given by 
 \begin{align*}
  h_{\{b_j\},p}(y)&= \lvert p - y \rvert_{\{b_j\}} - \lvert p \rvert_{\{b_j\}} = \langle b_j | y \rangle \quad \text{ for all } j = 1, \dots, 4 \\
  h_{H_{a_1}, p}(y) &= -y_1 + \lvert p_2 - y_2 \rvert - \lvert p_2 \rvert.
 \end{align*}
\end{ex}

\begin{rem}
 It is a general result that if $E \subset \Bo$ is a vertex, then $h_{E,p}$ is independent of the point $p$.  
\end{rem}

\subsection{Almost geodesics and Busemann points}

We will recall one of the main results of~\cite{WalII} about the horofunctions of a general normed vector space, not necessarily with a polyhedral norm.

Consider any metric space $(X,d)$ with a basepoint $b$.

\begin{defi}
	An \emph{almost geodesic} in a metric space $(X, d)$ is a sequence $x_n$ in $X$ such that $d(b, x_n)$ is unbounded and, for all $\epsilon>0$,
	\begin{align}
		\label{eqn:reverse_triangle_inequality}
		d(b, x_m) + d(x_m, x_n) < d(b, x_n) + \epsilon,
	\end{align}
	for $m$ and $n$ large enough, with $m \le n$.
\end{defi}

Note that this is a slight variation of the original definition by
Rieffel \cite{Rief}. The main difference is that his almost geodesics
were parameterized to be approximately unit speed.

\begin{defi}
	A \emph{Busemann point} is a horofunction in $\del_{hor}(X)$ that is the limit of some almost geodesic sequence in $X$.
\end{defi}

For normed vector spaces, Walsh obtains a very nice criterion to answer the question: when are all horofunctions Busemann points?

\begin{thrm}[\cite{WalII}, Thm.1.2] \label{thm:walsh}
	Consider any finite-dimensional normed vector space. Then every horofunction is a Busemann point if and only if the set of extreme sets of the dual unit ball is closed. 
\end{thrm}

In this Theorem, the topology on the set of extreme sets is the Chabauty topology on the space of all closed subspaces of the dual unit ball. If $X$ is any locally compact topological space, recall that the space $Sub(X)$ of all closed subspaces of $F$ is endowed with a natural compact topology called the Chabauty topology (see~\cite{Bourbaki_integration} for details). When $X$ is metrizable, then $Sub(X)$ is also metrizable, and a sequence a closed subspaces $(F_n)_{n \in \NN}$ converges to $F$ in $Sub(X)$ if:
\begin{itemize}
\item for any $x \in F$, for any $n \in \NN$, there exists $x_n \in F_n$ such that the sequence $(x_n)_{n \in \NN}$ converges to $x$, and
\item for any sequence $(x_n)_{n \in \NN}$ in $X$ such that for any $n \in \NN$ we have $x_n \in F_n$, every accumulation point of $(x_n)_{n \in \NN}$ belongs to $F$. 
\end{itemize}

\subsection{Convexity Lemma}\label{sec:convexity}

In this section, we will prove a technical convexity result, which will be used to determine the compactification of a flat.

\begin{lem}
\label{lem:reverse_triangle_inequality}
	Let $x_n$ be an almost geodesic in a metric space $(X, d)$.
	Then, for any $\epsilon>0$,
	\begin{align*}
		d(x_i, x_j) + d(x_j, x_k) < d(x_i, x_k) + \epsilon
	\end{align*}
	for $i$, $j$, and $k$ large enough, with $i \le j \le k$,
\end{lem}

\begin{proof}
	Applying~(\ref{eqn:reverse_triangle_inequality}) twice, first to the points
	$x_i$ and $x_j$, and then to the points $x_j$ and $x_k$, and then using the triangle inequality, we get, for $i$, $j$, and $k$ large enough, with $i \le j \le k$,
	\begin{align*}
		d(b, x_i) + d(x_i, x_j) + d(x_j, x_k)
   		 &< d(b, x_k) + 2 \epsilon \\
    	 &\le d(b, x_i) + d(x_i, x_k) + 2 \epsilon.
	\end{align*}
	The conclusion follows.
\end{proof}

Recall the following result.
If $x_n$ is an almost geodesic converging to a Busemann point $\xi$, then
\begin{align}
\label{eqn:optimal_along_ag}
\xi(x) = \lim_{n\to\infty} \big(d(x, x_n) + \xi(x_n)\big),
\qquad\text{for all $x \in X$}.
\end{align}

\begin{lem}
\label{lem:back_and_forth}
Let $x_n$ and $y_n$ be almost geodesics in a metric space $(X, d)$
converging to the same Busemann point $\xi$.
Then, there exists an almost geodesic $z_n$ that has infinitely many
points in common with $x_n$ and infinitely many points in common with $y_n$.
\end{lem}

\begin{proof}
Choose a sequence $\epsilon_i$ of positive real numbers such that
$\sum_{i=0}^\infty \epsilon_i$ is finite.
Define the sequence $z_i$ inductively in the following way.
Start with $z_0 := b$. Given $z_i$ with $i$ even,
use~(\ref{eqn:optimal_along_ag}) to define $z_{i+1} := x_n$,
where $n \ge i$ is large enough that
$\xi(z_i) > d(z_i, z_{i+1}) + \xi(z_{i+1}) - \epsilon_i$.
Given $z_i$ with $i$ odd, do the same but this time using the sequence $y_n$.

Observe that, from~(\ref{eqn:optimal_along_ag}),
$d(b, z_i) + \xi(z_i)$ converges to $\xi(b)= 0$, as $i$ tends to
infinity.

Since horofunctions are $1$-Lipschitz, $\xi(x) - \xi(y) \le d(x, y)$,
for all $x, y \in X$.
So, for all $m, n \in \NN$, with $m \leq n$,
\begin{align*}
d(z_m, z_n) 
    &\le \sum_{i=m}^{n-1} d(z_i, z_{i+1}) \\
    &< \xi(z_m) - \xi(z_n) + \sum_{i=m}^{n-1} \epsilon_i \\
    &\le \xi(z_m) + d(b, z_n) + \sum_{i=m}^{n-1} \epsilon_i.
\end{align*}
Adding $d(b, z_m)$ to both sides, we see that $z_i$ is an almost geodesic
because the error term $\sum_{i=m}^{n-1} \epsilon_i$
becomes arbitrarily small as $m$ and $n$ become large.
\end{proof}

We will now prove a convexity result for a pair of almost geodesics converging to the same Busemann point.

\begin{lem}[Convexity Lemma]
\label{lem:convexity lemma}
Let $x_n$ and $y_n$ be almost geodesics in a finite-dimensional normed space
$(X, ||\cdot||)$ converging to the same Busemann point $\xi$.
Let $\lambda_n$ be a sequence of coefficients in $[0, 1]$, and write
$m_n := (1 - \lambda_n) x_n + \lambda_n y_n$, for all $n\in\NN$.
Then $m_n$ converges to $\xi$, and it has an almost geodesic subsequence.
\end{lem}

\begin{proof}
Since the horofunction compactification is compact and metrizable,
to show that $m_n$ converges to $\xi$ it is enough to show that every
limit point $\eta$ of $m_n$ is equal to $\xi$.
By taking a subsequence if necessary, we may assume that $m_n$ converges
to $\eta$.

By Lemma~\ref{lem:back_and_forth}, there exists an almost geodesic sequence
$z_n$ that has infinitely many points in common with both $x_n$ and $y_n$.
Since almost geodesics always converge to a horofunction, $z_n$ has a limit,
which must necessarily be $\xi$.
By taking subsequences if necessary, we may assume that $z_n = x_n$
when $n$ is even, and $z_n = y_n$ when $n$ is odd.

Define the sequence
\begin{align}
w_n =
    \begin{cases}
    x_n, & \text{if $n$ is even}; \\
    m_n, & \text{if $n$ is odd}.
    \end{cases}
\end{align}
We will show that $w_n$ is an almost geodesic.

We first claim that, given any $\epsilon>0$, if $i,j,k\in \NN$
with $i < j < k$ are large enough and
such that $i$ and $k$ are even, and $j$ is odd, then
\begin{align}
      \label{eqn:no_short_cut}
      d(w_i, w_j) + d(w_j, w_k) < d(w_i, w_k) + \epsilon.
\end{align}
Here, $d(x, y) := || y - x||$ is the distance function associated to the
norm.

Indeed, note that the distance function $d(\cdot, \cdot)$ is convex
in each of its arguments. This implies that
\begin{align*}
      d(x_i, m_j)
      &\le (1 - \lambda_j) d(x_i, x_j) + \lambda_j d(x_i, y_j)
      \quad\text{and} \\
      d(m_j, x_k)
      &\le (1 - \lambda_j) d(x_j, x_k) + \lambda_j d(y_j, x_k).
\end{align*}
Adding, and applying Lemma~\ref{lem:reverse_triangle_inequality} to the
almost geodesics $x_n$ and $z_n$, we get
\begin{align*}
      d(x_i, m_j) + d(m_j, x_k)
      &< d(x_i, x_k) + \epsilon,
\end{align*}
for $i$, $j$, and $k$ large enough.
This establishes the claim.

Let $m$ and $n$ be natural numbers satisfying $m < n$.
There are four cases, depending on whether $m$ and $n$ are even or odd.
We consider only the case where both are odd; the other cases
are similar but less complicated.
Using the triangle inquality, the claim just established, and that $x_i$
is an almost geodesic, we have,
for any $\epsilon > 0$,
\begin{align*}
      d(b, w_m) &+ d(w_m, w_n)
	  \le d(b, w_{m-1}) + d(w_{m-1}, w_m)
	  + d(w_m, w_{m+1}) + d(w_{m+1}, w_n) \\
      &< d(b, w_{m-1}) + d(w_{m-1}, w_{m+1})
	  + d(w_{m+1}, w_{n+1}) - d(w_n, w_{n+1}) + 2 \epsilon \\
      &\le d(b, w_{n+1}) - d(w_n, w_{n+1}) + 4 \epsilon \\
      &\le d(b, w_n) + 4 \epsilon,
\end{align*}
if $m$ and $n$ are large enough.
The same inequality can be proved in the other cases.
We conclude that $w_n$ is an almost geodesic.

Observe that both $\xi$ and $\eta$ are limit points of $w_n$.
Since this sequence is an almost geodesic, it has a limit.
Hence, $\xi$ and $\eta$ are equal.
\end{proof}

\section{The compactification of flats in symmetric spaces} \label{sec:flat to space}

Throughout this section, we will assume that the $W$-invariant vector norm on the flat $\aa$ is such that every horofunction is a Busemann point. According to~\cite[Theorem~1.2]{WalII}, this is equivalent to asking the set of extreme sets of the dual unit ball to be closed. This is a very mild condition, satisfied notably by every polyhedral norm.

\medskip

We will give, for any such $G$-invariant Finsler metric on the symmetric space, an explicit homeomorphism between the intrinsic compactification of a flat and the closure of a flat in the horofunction compactification of the symmetric space. \\

The \emph{intrinsic compactification} of the flat $A\cdot p_0$ is the horofunction compactification of $A\cdot p_0$ within  the space of continuous functions on $A\cdot p_0$, i.e. $\overline{\psi(A\cdot p_0)}^{\widetilde{C}(A \cdot p_0)}$. Since the exponential map is a diffeomorphism $\exp: \aa \to A\cdot p_0$, the intrinsic compactification is homeomorphic to the horofunction compactification of the normed vector space $\aa$ with respect to the norm defined by the $W$-invariant convex ball $B$. In the cas of a polyhedral norm, this has been determined explicitely in \cite{JS}: 

\begin{thrm}[\cite{JS} Theorem 1.2.]\label{thrm:flatcompactification}
      Let $(V,\norm)$ be a normed vector space with polyhedral unit ball $B$. Then the horofunction compactification $\overline{V}^{hor}$ is homeomorphic to the dual convex polyhedron $B^\circ$. 
\end{thrm}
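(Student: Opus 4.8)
The plan is to establish the homeomorphism $\overline{V}^{hor} \cong B^\circ$ by constructing an explicit map in one direction and verifying it is a continuous bijection between compact Hausdorff spaces. By Walsh's theorem (quoted above), every point of $\overline{V}^{hor}$ is of the form $h_{E,p}$ where $E \subseteq B^\circ$ is a face (possibly $E = B^\circ$ itself, giving the interior points $\psi_z$) and $p \in V(E^\circ)^\bot$. So first I would set up the candidate map $\Phi : \overline{V}^{hor} \to B^\circ$. The natural idea is to send a horofunction $h_{E,p}$ to a point of $B^\circ$ lying in the relative interior of the face $E$, determined by the parameter $p$: for the full ball $E = B^\circ$ (interior points $z \in V$) one uses some fixed interior anchor, and for a proper face $E$ one uses the identification, coming from the dual-face lemma proved in the excerpt, that $V(E - z_0)$ is complementary to $V(E^\circ)^\bot$, so the orthogonal parameter $p \in V(E^\circ)^\bot$ can be matched with a point of $\ri E$ under a suitable homeomorphism $\ri E \cong V(E^\circ)^\bot$ (e.g. via a gnomonic/radial projection or the inverse of an explicit affine chart). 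The face decomposition $B^\circ = \bigsqcup_{E} \ri E$ then makes $\Phi$ a set-theoretic bijection, since for each face $E$ the set of horofunctions $\{h_{E,p} : p \in V(E^\circ)^\bot\}$ is in bijection with $\ri E$.

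Next I would prove continuity. The delicate direction is that $\Phi$ is continuous, equivalently (since both spaces are compact Hausdorff and $\Phi$ is a bijection) it suffices to show $\Phi$ is continuous, and then automatically $\Phi$ is a homeomorphism. To check continuity I would take a convergent sequence $\psi_{z_n} \to h_{E,p}$ in $\overline{V}^{hor}$ and apply the sequence characterization, Theorem~\ref{mainthrmbusem}: with $F = E^\circ$, we have eventually $z_{n,F} \in K_F$, $d(z_{n,F}, \delrel K_F) \to \infty$, and $z_n^F \to p$. The first two conditions say $z_n$, viewed through the cone $K_F$ over the face $F \subset B$, "escapes to infinity deep inside that cone"; dually this pins down that the image point should lie in the relative interior of $E = F^\circ \subset B^\circ$. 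The third condition $z_n^F \to p$ controls exactly which point of $\ri E$. Unwinding these through the explicit chart $\ri E \cong V(E^\circ)^\bot = V(F)^\bot$ shows $\Phi(\psi_{z_n}) \to \Phi(h_{E,p})$. One also needs to handle sequences of boundary points converging to boundary points (faces degenerating to subfaces), which is the genuinely fiddly combinatorial part: a sequence $h_{E_n, p_n} \to h_{E,p}$ forces $E$ to be a face of the closure of $E_n$ (or rather, the $E_n$ must eventually contain $E$ in the appropriate sense), and again the characterization theorem, applied to diagonal sequences approximating the boundary points, yields convergence of the images in $B^\circ$.

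The main obstacle I expect is precisely this last point: getting a clean, uniform description of convergence \emph{within the boundary} $\del_{hor}(V)$ and matching it with convergence in the face poset of $B^\circ$. Theorem~\ref{mainthrmbusem} as stated only characterizes convergence of the "interior" sequences $\psi_{z_n}$; to get continuity of $\Phi$ on all of $\overline{V}^{hor}$ one must either extend that characterization to arbitrary sequences in the compactification, or use a density/diagonal argument: every boundary point $h_{E,p}$ is a limit of interior points $\psi_{z_n}$, so given $h_{E_m,p_m} \to h_{E,p}$ one chooses for each $m$ an interior point $\psi_{z_{m}}$ close to $h_{E_m,p_m}$ and close to $\Phi^{-1}$-image, and runs the estimate on the resulting single sequence. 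The bookkeeping of which cones $K_{E_m^\circ}$ and orthogonal complements $V(E_m^\circ)^\bot$ are involved, and checking these behave continuously as $E_m$ varies over faces, is routine given the dual-face lemma but notationally heavy. A secondary point to be careful about is that $B^\circ$ has a natural stratification by relative interiors of faces and one must check the topology of $\overline{V}^{hor}$ induced by uniform convergence on bounded sets matches the subspace topology of $B^\circ \subset V^*$ across strata — again this follows from Theorem~\ref{mainthrmbusem} but is the crux of the argument.
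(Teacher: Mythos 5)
You should first be aware that the paper does not prove Theorem~\ref{thrm:flatcompactification} at all: it is imported verbatim from \cite{JS} (Theorem~1.2 there), and the only hint the authors give about its proof is the remark after Proposition~\ref{prop:comp_flat} that ``the identification of the closure \dots with $D$ relies on a moment map''. So your proposal can only be judged on its own terms. Its skeleton is the right one and matches what is true: the boundary is stratified by pairs $(E,p)$ with $E$ a proper face of $B^\circ$ and $p \in V(E^\circ)^\bot$, the dual-face lemma gives $\dim V(E^\circ)^\bot = \dim E$ so a stratum-wise bijection with $B^\circ = \bigsqcup_E \ri E$ is plausible, a continuous bijection from the compact space $\overline{V}^{hor}$ to the Hausdorff space $B^\circ$ is automatically a homeomorphism, and your density/diagonal argument does reduce boundary-to-boundary continuity to interior-to-boundary continuity (since $\widetilde{C}(V)$ is metrizable, sequences suffice).

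The genuine gap is that you never actually construct the map $\Phi$, and the construction is not a matter of picking ``some fixed interior anchor'' and ``a suitable homeomorphism $\ri E \cong V(E^\circ)^\bot$'' stratum by stratum: for generic independent choices the resulting bijection is \emph{not} continuous, so the step ``unwinding these through the explicit chart shows $\Phi(\psi_{z_n}) \to \Phi(h_{E,p})$'' is precisely the content of the theorem and is missing. Continuity across strata forces rigid compatibilities. Already in the paper's $L^1$ example: the chart on the open edge $H_{a_1}$ of $B^\circ$ is forced up to its asymptotics, because $h_{H_{a_1},p}(y) = -y_1 + \lvert p_2 - y_2\rvert - \lvert p_2\rvert$ converges to $\langle b_1 \,|\, y\rangle$ as $p_2 \to +\infty$ and to $\langle b_4 \,|\, y\rangle$ as $p_2 \to -\infty$, so the chart must send $p_2 \to +\infty$ to the vertex $b_1=(-1,-1)$ (note the sign reversal) --- and the interior identification $V \cong \intt B^\circ$ must simultaneously send every sequence escaping deep inside the cone $K_{L_{b_1}}$, in whatever direction and along whatever parallel line, to the single vertex $b_1$, while sequences along distinct lines parallel to the vertex direction $a_1$ of $B$ must converge to distinct points of $\ri H_{a_1}$ determined by $p_2$. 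In particular a radial/visual-type identification of $V$ with $\intt B^\circ$ is wrong (the horofunction boundary of a polyhedral norm is not the visual boundary), and an arbitrary anchor-based one has no reason to cohere with the boundary charts. What \cite{JS} supply, and what your proposal needs, is one globally defined map (of moment-map type: roughly a convex combination of the vertices of $B^\circ$ whose weights concentrate on the face $E = F^\circ$ as $z_{n,F}$ escapes in $K_F$, with the limit point inside $\ri E$ governed by $z_n^F \to p$), for which the three conditions of Theorem~\ref{mainthrmbusem} translate term by term into convergence in $B^\circ$. Until such a map is written down and its asymptotics computed, the argument is a plan rather than a proof.
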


Let $d$ be the distance function associated to a $G$-invariant Finsler metric on the symmetric space $X = G/K$, and $\psi: X \to \widetilde{C}(X), \quad z \mapsto \psi_z$ the embedding defined in Subsection \ref{subsec:defi_of_horofunction} on page \pageref{subsec:defi_of_horofunction}. Let us state some basic observations.

\begin{lem}\label{K_invariance}
      The function  $\psi_{p_0}$ is $K$-invariant. Moreover for every $g \in G$, the function  $\psi_{g\cdot p_0}$ is $gKg^{-1}$-invariant. 
\end{lem}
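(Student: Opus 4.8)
The plan is to unwind the definition $\psi_z(x) = d(x,z) - d(p_0,z)$ and use nothing beyond the $G$-invariance of the Finsler distance $d$ (which holds because the underlying Finsler metric on $TX$ is $G$-invariant) together with the fact that $K = \Stab_G(p_0)$. Note that $\psi_{p_0}$ is just the $g = e$ case of the second assertion, since $eKe^{-1} = K$; so it is enough to prove the statement about $\psi_{g\cdot p_0}$, and the $K$-invariance of $\psi_{p_0}$ will follow. (For orientation: $\psi_{p_0}(x) = d(x,p_0)$, and for $k \in K$ one has $d(k\cdot x, p_0) = d(k\cdot x, k\cdot p_0) = d(x,p_0)$ directly; the general argument is the same idea conjugated by $g$.)

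First I would fix $g \in G$, an element $h \in gKg^{-1}$ written as $h = gkg^{-1}$ with $k \in K$, and an arbitrary $x \in X$. Then
\[
\psi_{g\cdot p_0}(h\cdot x) = d(h\cdot x, g\cdot p_0) - d(p_0, g\cdot p_0),
\]
and only the first summand depends on $h$. Next I would simplify $d(h\cdot x, g\cdot p_0)$ by applying in turn the isometries $g^{-1}$, then $k^{-1}$, then $g$ — each step legitimate because $d$ is $G$-invariant — and using $k\cdot p_0 = p_0$ in the middle:
\[
d\big(gkg^{-1}\cdot x,\, g\cdot p_0\big) = d\big(kg^{-1}\cdot x,\, p_0\big) = d\big(kg^{-1}\cdot x,\, k\cdot p_0\big) = d\big(g^{-1}\cdot x,\, p_0\big) = d\big(x,\, g\cdot p_0\big).
\]
Substituting back yields $\psi_{g\cdot p_0}(h\cdot x) = d(x, g\cdot p_0) - d(p_0, g\cdot p_0) = \psi_{g\cdot p_0}(x)$, which is the assertion; specializing to $g = e$ gives the $K$-invariance of $\psi_{p_0}$.

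There is essentially no obstacle: the statement is a formal consequence of the $G$-invariance of $d$. The only point needing a moment's attention is that $d$ is a (possibly asymmetric) forward distance, but $G$-invariance means $d(\gamma\cdot y, \gamma\cdot y') = d(y, y')$ for all $y, y' \in X$ and all $\gamma \in G$ irrespective of symmetry, so every equality in the displayed chain is valid as written.
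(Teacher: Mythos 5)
Your proof is correct and follows essentially the same route as the paper: both arguments use only the $G$-invariance of the distance $d$ together with $k \cdot p_0 = p_0$ to get $d\big((gkg^{-1})\cdot x,\, g\cdot p_0\big) = d(x, g\cdot p_0)$, your version merely spelling out the intermediate isometry steps that the paper compresses into one line. Your remark that asymmetry of $d$ is harmless is accurate, since $G$-invariance holds for the forward distance regardless of symmetry.
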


\begin{proof}
      Fix $g \in G$ and $k \in K$. Then, for any $x \in X$, we have
      \begin{align*} 
	  \psi_{g\cdot p_0}((gkg^{-1}) \cdot x) &= d((gkg^{-1}) \cdot x,g \cdot p_0)-d(p_0,g \cdot p_0)\\
	  &=d(x,g \cdot p_0)-d(p_0,g \cdot p_0)=\psi_{g\cdot p_0}(x).
      \end{align*}
      So $\psi_{g\cdot p_0}$ is $gKg^{-1}$-invariant.
\end{proof}

\begin{lem}\label{K_equivariance}
      The map $\psi: X \to \widetilde{C}(X)$ is $K$-equivariant, that is, $\psi_{k\cdot z} (x) = k \cdot \psi_z(x)$, where the action on $\widetilde{C}(X)$ is given by $k\cdot f(x):= f(k^{-1} x)$. 
\end{lem}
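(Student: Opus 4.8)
The plan is to verify the identity $\psi_{k\cdot z}(x) = (k\cdot\psi_z)(x)$ pointwise, directly from the definition of $\psi_z$ together with two facts about the $G$-invariant Finsler distance $d$: first, that $d$ is $G$-invariant and hence in particular $K$-invariant, i.e. $d(g\cdot a, g\cdot b) = d(a,b)$ for all $g\in G$ and $a,b\in X$ (this holds even though $d$ is possibly asymmetric, since invariance applies to each argument); and second, that $K = \Stab_G(p_0)$, so $k\cdot p_0 = p_0$ for every $k\in K$.

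Concretely, I would fix $k\in K$, $z\in X$, and $x\in X$, and compute
\[
\psi_{k\cdot z}(x) = d(x,k\cdot z) - d(p_0, k\cdot z).
\]
Applying $K$-invariance of $d$ with the element $k^{-1}$ to both terms gives $d(x,k\cdot z) = d(k^{-1}\cdot x, z)$ and $d(p_0,k\cdot z) = d(k^{-1}\cdot p_0, z) = d(p_0,z)$, where the last equality uses $k^{-1}\cdot p_0 = p_0$. Hence
\[
\psi_{k\cdot z}(x) = d(k^{-1}\cdot x, z) - d(p_0,z) = \psi_z(k^{-1}\cdot x) = (k\cdot\psi_z)(x),
\]
by the definition of the $K$-action $k\cdot f(x) = f(k^{-1}x)$ on $\widetilde C(X)$. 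Since $x$ was arbitrary, $\psi_{k\cdot z} = k\cdot\psi_z$ as elements of $\widetilde C(X)$.

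There is no real obstacle here: the statement is a one-line consequence of the invariance of the metric and the choice of basepoint, entirely parallel to (and slightly simpler than) the computation already carried out in the proof of Lemma~\ref{K_invariance}. The only point to be mildly careful about is the possible asymmetry of $d$, but since the $G$-invariance $d(g\cdot a,g\cdot b)=d(a,b)$ respects the order of the arguments, the computation goes through verbatim; one should also note that the action on $\widetilde C(X)$ descends correctly modulo constants, which is immediate since it is induced by the left action on $C(X)$ by precomposition with $k^{-1}$.
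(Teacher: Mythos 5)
Your argument is correct and is essentially identical to the paper's proof: both compute $\psi_{k\cdot z}(x)$ directly and use $G$-invariance of $d$ together with $k^{-1}\cdot p_0 = p_0$ to rewrite it as $\psi_z(k^{-1}\cdot x) = (k\cdot\psi_z)(x)$. Your extra remarks on the asymmetry of $d$ and the action descending modulo constants are fine but not needed beyond what the paper already does implicitly.
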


\begin{proof}
      Fix $x,z \in X$ and $k \in K$. Then
      \begin{align*}
	  \psi_{k \cdot z}(x) &= d(x,k \cdot z) - d(p_0,k \cdot z) = d(k^{-1} \cdot x,z)-d(p_0,z)\\
	&=\psi_z(k^{-1} \cdot x)=k \cdot \psi_z(x). \qedhere
      \end{align*} 
\end{proof}

\begin{lem}
      Let $G = K\overline{A^+} K$ be a Cartan decompostion, and $X = K\overline{A^+} \cdot p_0$. Then 
      \[
	  \overline{\psi(X)}^{\widetilde{C}(X)} = \overline{\psi(K\overline{A^+} \cdot p_0)}^{\widetilde{C}(X)} = K \overline{\psi(\overline{A^+} \cdot p_0)}^{\widetilde{C}(X)}. 
      \]
      In particular, the horofunction compactification $\overline{\psi(X)}^{\widetilde{C}(X)}$ is determined by the horofunction compactification of the flat $F = A \cdot p_0$, or more precisely of a closed Weyl chamber $F^+ = \overline{A^+} \cdot p_0$. 
\end{lem}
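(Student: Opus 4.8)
The first equality is a triviality once we unwind the Cartan decomposition. By Lemma~\ref{cartan} we have $G = K\overline{A^+}K$, and since $K$ fixes the base point ($K\cdot p_0 = p_0$) it follows that
\[
X = G\cdot p_0 = K\overline{A^+}K\cdot p_0 = K\overline{A^+}\cdot p_0 .
\]
Applying $\psi$ gives $\psi(X) = \psi(K\overline{A^+}\cdot p_0)$ as subsets of $\widetilde{C}(X)$, hence they have the same closure. So the content is entirely in the second equality.

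For the second equality, the key input is the $K$-equivariance of $\psi$ (Lemma~\ref{K_equivariance}): $\psi_{k\cdot z} = k\cdot\psi_z$ for all $k\in K$ and $z\in X$, where $K$ acts on $\widetilde{C}(X)$ by $(k\cdot f)(x) = f(k^{-1}x)$. This immediately yields
\[
\psi(K\overline{A^+}\cdot p_0) = K\cdot\psi(\overline{A^+}\cdot p_0)
\]
as subsets of $\widetilde{C}(X)$. Thus it suffices to prove that, for the compact group $K$ acting on $\widetilde{C}(X)$ and the subset $S := \psi(\overline{A^+}\cdot p_0)$, one has $\overline{K\cdot S} = K\cdot\overline{S}$. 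This is the standard fact that the saturation of a closed set under a compact group action is closed, which I would establish in two steps. First, the $K$-action on $\widetilde{C}(X)$ is continuous for the topology of uniform convergence on $d_{sym}$-bounded sets: since $G$ acts on $X$ by $d$-isometries (hence $d_{sym}$-isometries), an element $k$ carries bounded sets to bounded sets, and if $k_\alpha\to k$ in the compact group $K$ and $f_\alpha\to f$ in $\widetilde{C}(X)$, then $k_\alpha\cdot f_\alpha\to k\cdot f$. Second, under the hypotheses on $X$ recalled in Subsection~\ref{subsec:defi_of_horofunction} (geodesic, proper with respect to $d_{sym}$, and $d$ symmetric with respect to convergence), the set $\overline{\psi(X)}^{\widetilde{C}(X)}$ is compact, hence so is its closed subset $\overline{S}$.

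Granting these two points, $K\times\overline{S}$ is compact, so its image $K\cdot\overline{S}$ under the (continuous) action map is compact, hence closed in the Hausdorff space $\widetilde{C}(X)$; as it contains $K\cdot S$ it contains $\overline{K\cdot S}$. Conversely, if $s = \lim_n s_n$ with $s_n\in S$ and $k\in K$, then $k\cdot s = \lim_n k\cdot s_n\in\overline{K\cdot S}$ by continuity of $k\cdot(-)$, so $K\cdot\overline{S}\subseteq\overline{K\cdot S}$. Hence $\overline{K\cdot S} = K\cdot\overline{S}$, which is exactly the desired second equality. For the ``in particular'' statement: the displayed chain shows $\overline{\psi(X)}^{\widetilde{C}(X)} = K\cdot\overline{\psi(F^+)}^{\widetilde{C}(X)}$ with $F^+ = \overline{A^+}\cdot p_0\subseteq F = A\cdot p_0$; since $\overline{A^+}$ is a fundamental domain for the $W$-action on $A$ and $W$ is represented inside $K$ (by $\N_K(A)$), one also gets $\overline{\psi(F)}^{\widetilde{C}(X)}\subseteq K\cdot\overline{\psi(F^+)}^{\widetilde{C}(X)}$, so the horofunction compactification of $X$ is determined by that of $F$, and more precisely by that of the closed Weyl chamber $F^+$.

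\textbf{Main obstacle.} The routine but genuinely necessary point is the continuity of the $K$-action on $\widetilde{C}(X)$ with respect to the topology of uniform convergence on $d_{sym}$-bounded sets; this needs the isometric action of $G$ and the compactness of $K$, but must be checked with the (possibly asymmetric) Finsler distance in mind. Everything else reduces to the elementary topology of compact group orbits together with the compactness of the horofunction compactification recalled earlier.
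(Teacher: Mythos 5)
Your proposal is correct and follows essentially the same route as the paper: reduce to the second equality via the Cartan decomposition and $K$-equivariance of $\psi$, then use compactness of $K$ and of $\overline{\psi(\overline{A^+}\cdot p_0)}^{\widetilde{C}(X)}$ together with continuity of the $K$-action to conclude that $K\overline{\psi(\overline{A^+}\cdot p_0)}^{\widetilde{C}(X)}$ is compact (hence closed) and contains $\psi(K\overline{A^+}\cdot p_0)$, with the reverse inclusion being immediate. You merely spell out the continuity of the action and the saturation argument in more detail than the paper does.
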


\begin{proof}
      Since $\overline{\psi(\overline{A^+} \cdot p_0)}^{\widetilde{C}(X)}$ is a compact subspace of $\widetilde{C}(X)$ and $K$ is a compact subgroup of $G$, which acts continuously on $\widetilde{C}(X)$, we deduce that the space $K \overline{\psi(\overline{A^+} \cdot p_0)}^{\widetilde{C}(X)}$ is a compact subspace of $\widetilde{C}(X)$. Since it contains $\psi(K\overline{A^+} \cdot p_0)$, we conclude that $\overline{\psi(K\overline{A^+} \cdot p_0)}^{\widetilde{C}(X)} \subset K \overline{\psi(\overline{A^+} \cdot p_0)}^{\widetilde{C}(X)}$. As the converse inclusion is clear, we conclude that 
      \[
      		\overline{\psi(K\overline{A^+} \cdot p_0)}^{\widetilde{C}(X)} = K \overline{\psi(\overline{A^+} \cdot p_0)}^{\widetilde{C}(X)}. \qedhere
      \]
\end{proof}

In order to understand the horofunction compactification $\overline{\psi(\overline{A^+} \cdot p_0)}^{\widetilde{C}(X)}$ of a closed Weyl chamber, we will first compare it to  its closure in the intrinsic horofunction compactification in $\widetilde{C}(A \cdot p_0)$.

\subsection{The closure of a flat} 
The aim of this section is to compare the intrinsic compactification of $A\cdot p_0$ with the closure of the flat $A\cdot p_0$ in the horofunction compactification of $X$. To minimize confusion, we introduce the following notation: 

Let $d$ be the distance function of a $G$-invariant Finsler metric on $X = G/K$ and 
\begin{align} \label{Psi_X}
      \psi^X: X &\lora \widetilde{C}(X) \nonumber \\
      z &\longmapsto \psi^X_z \coloneqq d(\cdot,z) - d(p_0,z)
\end{align}
the embedding of $X$ into the space of continuous functions on $X$ vanishing at $p_0$. 

We denote by $d$ also the restriction of the distance function to the flat $F= A\cdot p_0 \subset X$ and let 
\begin{align} \label{Psi_F}
      \psi^F: F &\lora \widetilde{C}(F) \nonumber \\
      z &\longmapsto \psi^F_z \coloneqq d(\cdot,z) - d(p_0,z)
\end{align}
denote the embedding of $F$ into the space of continuous functions on $F$ vanishing at $p_0$. The closure of $\psi^F(F) \subset  \widetilde{C}(F)$ is the intrinsic compactification of $F$. We set $F^+ \coloneqq \overline{A^+} \cdot p_0$. 

\subsection{Groups associated with subsets of simple roots}

We want to associate to each horofunction on the flat a horofunction on the whole symmetric space $X$. We will start by setting up notation.

Let $\Delta$ be the set of positive roots. Given a subset $I \subset \Delta$ we denote by 

      \begin{tabularx}{\textwidth}{lX}
	  $W_I < W$ & the subgroup generated by the reflections in the hyperplanes $\ker(\alpha)$ for $\alpha \in I$,  \\
	  $\frak{a}_I= \bigcap_{\alpha \in I} \ker \alpha$,\\ 
	  $\frak{a}^I$ & the orthogonal complement of $\frak{a}_I$ in $\frak{a}$, \\
	  $A_I$, $A^I$  & the connected subgroups of $A$ with Lie algebras $\frak{a}_I$ and $\frak{a}^I$ respectively, \\
	   $M=C_K(A)$ & the centralizer of $A$ in $K$, \\
	  $G^I$  & the derived subgroup of the centralizer of $A_I$ in $G$, \\
	  $K^I= G^I \cap K$, & so that $K^IM$ is the centralizer of $A_I$ in $K$, \\
	  \multicolumn{2}{l}{$W^I= N_{K^I}(A^I)/Z_{K^I}(A^I)$  the Weyl group of $G^I$,} \\
	  $N$ & the connected subgroup with Lie algebra $\bigoplus_{\alpha \in \Sigma^+} \frak{g}_\alpha$, \\
	  $N_I$ & the connected subgroup of $N$ with Lie algebra $\bigoplus_{\alpha \in \Sigma^+ \bs \Sigma^I} \frak{g}_\alpha$, where $\Sigma^I$ is the root subsystem spanned by $I$.
      \end{tabularx}

\begin{defi}
Two subsets $I,J$ of $\Delta$ are said to be \emph{orthogonal} if, for every $\alpha \in I$ and $\beta \in J$, the roots $\alpha$ and $\beta$ are orthogonal. A subset $I \subset \Delta$ is called \emph{irreducible} if it is not a disjoint union of two proper orthogonal subsets.
\end{defi}

\begin{lem} \label{WI_invariance}
Fix a subset $I$ of $\Delta$, and consider a linear subspace $V$ of $\frak{a}^I$ which is invariant under the action of $W^I$. Then there exists $J \subset I$ such that $V=\frak{a}^J$, and $J$ and $I \bs J$ are orthogonal.
\end{lem}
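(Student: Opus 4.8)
The plan is to reduce the statement to the representation theory of the finite reflection group $W^I$ acting on $\aa^I$, and to use the classical fact that the Weyl group of an \emph{irreducible} root system acts irreducibly, with no nonzero fixed vectors, on the real span of the roots.

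First I would pin down the concrete picture: $W^I$ acts on $\aa^I$ exactly as the Weyl group $W_I$ of the root system $\Sigma^I$, whose simple system is $I$ and whose span (after identifying $\aa$ with $\aa^*$ via the Killing form) is $\aa^I$; this is part of the structure theory recalled in Section~\ref{sec:finsler} (see also \cite{Helg}). Decompose $I$ into its irreducible components $I = I_1 \sqcup \cdots \sqcup I_r$, pairwise orthogonal. Then $\Sigma^I = \Sigma^{I_1} \oplus \cdots \oplus \Sigma^{I_r}$, the subspace $\aa^I$ splits as the orthogonal direct sum $\aa^I = \aa^{I_1} \oplus \cdots \oplus \aa^{I_r}$ with $\aa^{I_k} = \spann\{\,H_\alpha : \alpha \in I_k\,\}$, and $W^I = W^{I_1} \times \cdots \times W^{I_r}$, the factor $W^{I_k}$ acting on $\aa^{I_k}$ as the (irreducible) Weyl group of $\Sigma^{I_k}$ and trivially on $\aa^{I_\ell}$ for $\ell \neq k$.

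The core step is to show that any $W^I$-invariant subspace $V \subseteq \aa^I$ is of the form $\bigoplus_{k \in S}\aa^{I_k}$ for some $S \subseteq \{1,\dots,r\}$. Since each $W^{I_k}$ has no nonzero fixed vector on $\aa^{I_k}$, the subspace of $\aa^I$ fixed pointwise by $\prod_{\ell \neq k} W^{I_\ell}$ is precisely $\aa^{I_k}$; averaging over this finite group therefore realizes the orthogonal projection $\Pi_k \colon \aa^I \to \aa^{I_k}$. As $V$ is invariant under $\prod_{\ell\neq k}W^{I_\ell} \le W^I$, we get $\Pi_k(V) \subseteq V \cap \aa^{I_k}$, hence $\Pi_k(V) = V \cap \aa^{I_k}$; summing $\sum_k \Pi_k = \id$ gives $V = \bigoplus_k (V \cap \aa^{I_k})$. (Equivalently, Maschke's theorem plus the observation that the $\aa^{I_k}$ are pairwise non-isomorphic irreducible $W^I$-modules — non-isomorphic because $W^{I_\ell}$ acts nontrivially on $\aa^{I_\ell}$ but trivially on $\aa^{I_k}$ for $k \ne \ell$ — forces every invariant subspace to be a coordinate subsum.) Finally, each $V \cap \aa^{I_k}$ is $W^{I_k}$-invariant, so it is $0$ or all of $\aa^{I_k}$ by irreducibility; letting $S$ be the set of indices for which it is everything and $J := \bigcup_{k\in S} I_k \subseteq I$, we obtain $\aa^J = \spann\{\,H_\alpha : \alpha\in J\,\} = \bigoplus_{k\in S}\aa^{I_k} = V$, while $I \setminus J = \bigcup_{k\notin S}I_k$ is orthogonal to $J$ because distinct irreducible components are orthogonal.

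The only nonformal ingredients are the classical irreducibility/no-fixed-vector fact for Weyl groups of irreducible root systems (cited, not proved) and the translation between $W^I$, $\aa^I$ and the root system $\Sigma^I$ — so I expect no genuine obstacle here. The one point that must be handled carefully is that $\aa^I$ decomposes as a module over the \emph{full product} $W^{I_1}\times\cdots\times W^{I_r}$ rather than over a single group; this is exactly what makes the summands pairwise non-isomorphic and rules out a ``diagonal'' invariant subspace that would fail to be of the form $\aa^J$.
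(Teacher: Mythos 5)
Your proof is correct and follows essentially the same route as the paper: decompose $I$ into irreducible orthogonal components, note that $\aa^I$ splits as the corresponding direct sum of irreducible $W^I$-modules, and conclude that any invariant subspace is a subsum, hence of the form $\aa^J$. The only difference is that you spell out (via the averaging projections, or equivalently Maschke plus the pairwise non-isomorphy of the summands) the step that the paper simply asserts, namely that a $W^I$-invariant subspace must be a sum of some of the $\aa^{J_j}$.
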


\begin{proof}
Let $I = J_1 \sqcup J_2 \sqcup \dots \sqcup J_r$ be the decomposition of $I$ into irreducible subsets. The linear representation of $W^I$ on $\frak{a}^I$ decomposes as the direct sum of the irreducible representations $\frak{a}^I = \displaystyle \bigoplus_{j=1}^r \frak{a}^{J_j}$. Since $V$ is a $W^I$-invariant subspace, there exists $R \subset \{1,2,\dots,r\}$ such that $V=\displaystyle \bigoplus_{j \in R} \frak{a}^{J_j}$. As a consequence, we have $V=\frak{a}^J$, where $J=\displaystyle \bigsqcup_{j \in R} J_j$.
\end{proof}

\begin{lem} \label{lem:invariance subgroup} 
      Let $C$ be a non-discrete subset of $A^I$. Let $J' \subset I$ denote the smallest subset such that
      \begin{itemize}
	  \item[i)] $C \subset cA^{J'}$ forall $c \in C$,  and
	  \item[ii)] the roots in $J'$ and in $I \bs J'$ are orthogonal.
      \end{itemize} 
      Then the smallest closed subgroup of $W^I A$ containing all conjugates \linebreak 
	$\{cW^Ic^{-1}, c \in C\}$ is equal to $W^I A^{J'}$. 
\end{lem}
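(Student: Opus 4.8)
The plan is to analyze the group generated inside $W^I A$ by the conjugates $cW^Ic^{-1}$ for $c\in C$, and to show this group is exactly $W^IA^{J'}$. First I would record two elementary facts: (a) for $c\in A^I$ and $w\in W^I$, the conjugate $cwc^{-1}$ is an affine isometry of $\aa^I$ whose linear part is $w$ (since $A^I$ is abelian and acts by translations on $\aa^I$, conjugation only shifts the fixed-point set of $w$); and (b) $W^IA^{J'}$ is genuinely a subgroup, because $A^{J'}$ is normalized by $W^I$ — indeed $\aa^{J'}$ is $W^I$-invariant, which uses the orthogonality of $J'$ and $I\setminus J'$ exactly as in Lemma~\ref{WI_invariance}. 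These two facts make the statement well-posed.

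Next I would prove the inclusion ``$\subseteq$''. Fix $c_0\in C$. For any $c\in C$ and $w\in W^I$, write $cwc^{-1} = (cc_0^{-1})(c_0wc_0^{-1})(cc_0^{-1})^{-1}$; the element $c_0wc_0^{-1}$ lies in $c_0W^Ic_0^{-1}$, and $cc_0^{-1}\in A^{J'}$ by property (i) of $J'$. Since $A^{J'}$ commutes with translations and $\aa^{J'}$ is $W^I$-invariant, one checks $c_0W^Ic_0^{-1}\subseteq W^IA^I$ already, and more precisely, after translating the whole picture by $c_0$ (i.e. conjugating by $c_0$, which is harmless since $c_0W^Ic_0^{-1}$ generates the same group translated), every generator lies in $W^I A^{J'}$. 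Hence the generated group is contained in $W^I A^{J'}$. The real content is the reverse inclusion: the generated group $H$ contains $W^I$ (conjugate by any single $c$, or note $c_0W^Ic_0^{-1}$ together with… — more carefully, $H$ contains products $cwc^{-1}\cdot (cw'c^{-1})^{-1}$ and, choosing $c,c'\in C$ distinct, commutators that produce pure translations). Concretely, for $w\in W^I$ and $c,c'\in C$, the element $(cwc^{-1})(c'wc'^{-1})^{-1}$ is the translation by $(1-w)(\log c'-\log c)$ (computing in $\aa^I$, identifying $A^I$ with $\aa^I$). As $w$ ranges over $W^I$ and $c,c'$ over $C$, the vectors $(1-w)(\log c'-\log c)$ span a $W^I$-invariant subspace $V\subseteq\aa^I$; by Lemma~\ref{WI_invariance}, $V=\aa^J$ for some $J\subseteq I$ with $J\perp(I\setminus J)$, and by construction of $C$ and minimality of $J'$ one gets $V=\aa^{J'}$, so $H\supseteq A^{J'}$; combined with $W^I\subseteq H$ this gives $H\supseteq W^IA^{J'}$, and finally one must check $H$ is closed, or rather that $W^IA^{J'}$ is already closed (it is, being a product of the finite group $W^I$ with the closed connected subgroup $A^{J'}$) so the smallest \emph{closed} subgroup coincides with $H$'s closure, which is $W^IA^{J'}$.

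The step I expect to be the main obstacle is pinning down exactly that the span of the translation vectors $\{(1-w)(\log c'-\log c) : w\in W^I,\ c,c'\in C\}$ equals $\aa^{J'}$ and not something smaller or larger. The inclusion into $\aa^{J'}$ follows from property (i) ($\log c' - \log c\in\aa^{J'}$ for all $c,c'\in C$, since $C\subseteq cA^{J'}$) together with $W^I$-invariance of $\aa^{J'}$. For the reverse, one uses that $C$ is \emph{non-discrete}: the differences $\log c'-\log c$ therefore span a linear subspace $V_0$ of $\aa^{J'}$ of positive dimension, and $V:=\sum_{w\in W^I}(1-w)V_0$ is $W^I$-invariant; Lemma~\ref{WI_invariance} forces $V=\aa^{J''}$ for some $J''\subseteq I$ orthogonal to its complement, and then the minimality clause defining $J'$ — that $J'$ is the smallest subset of $I$ with $C\subseteq cA^{J'}$ for all $c$ and $J'\perp(I\setminus J')$ — is precisely what identifies $J''$ with $J'$. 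I would need to argue carefully that $(1-w)$ applied to $V_0$, summed over $W^I$, cannot collapse $V_0$ to something strictly inside a proper $\aa^{J''}$: this is where $W^I$-irreducibility of the summands $\aa^{J_j}$ from Lemma~\ref{WI_invariance} does the work, since on each irreducible piece $\aa^{J_j}$ the operator $\sum_{w}(1-w)$ is a nonzero $W^I$-equivariant endomorphism of an irreducible module (nonzero because $W^I$ has no nonzero fixed vector in a nontrivial irreducible), hence invertible by Schur, so $V$ meets $\aa^{J_j}$ in all of it as soon as $V_0$ does.
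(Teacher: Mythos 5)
Your overall route is the same as the paper's: extract pure translations from products of the conjugates, observe that the translation part obtained is $W^I$-invariant, identify it via Lemma~\ref{WI_invariance}, and pin it down as $\aa^{J'}$ using minimality of $J'$ and non-discreteness of $C$; the paper's proof is a compressed version of exactly this (translate $C$, note that $\Gamma$ is non-discrete with linear part $W^I$, apply Lemma~\ref{WI_invariance} to the identity component). Your Schur-type argument on the irreducible pieces $\aa^{J_j}$ is a correct, slightly more explicit way of seeing that the \emph{linear span} of the vectors $(1-w)(\log c'-\log c)$ is exactly $\aa^{J'}$.

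However, the two steps you treat as routine are precisely the delicate ones, and as written they do not close. First, passing from ``the translation vectors span $\aa^{J'}$'' to ``$A^{J'}\subseteq H$'' is a genuine gap: the closed subgroup of $(\aa^I,+)$ generated by a set of vectors is in general a lattice times a subspace, not the linear span, and non-discreteness of $C$ only yields a nontrivial identity component, not the full span. Concretely, for two orthogonal simple roots $\alpha,\beta$, with $e_\alpha,e_\beta$ spanning $\aa^{\{\alpha\}},\aa^{\{\beta\}}$, the non-discrete set $C=\{t e_\alpha : t\in[0,1]\}\cup\{e_\beta\}$ has $J'=I$, yet the closed group generated by the conjugates meets the $e_\beta$-translations only in $2\ZZ e_\beta$; so the closure step needs $C$ to be connected (as it is in every application in the paper, where $C$ is a segment or ray through the identity, so that for each fixed $w$ the vectors $(1-w)(c-c')$ already generate a dense subgroup of each line they span). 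Second, ``conjugating by $c_0$ is harmless'' is not: $W^IA^{J'}$ is not normalized by $c_0$ unless the component of $\log c_0$ in $\aa^{I\setminus J'}$ vanishes (equivalently $C\subseteq A^{J'}$). The translation part of $c_0wc_0^{-1}$ is $(1-w)\log c_0$, which need not lie in $\aa^{J'}$: for $C=\{e_\alpha+te_\beta : t\in[0,1]\}$ one has $J'=\{\beta\}$, while $c\,s_\alpha c^{-1}$ has translation part $2e_\alpha\notin\aa^{\{\beta\}}$, so the inclusion $H\subseteq W^IA^{J'}$ fails. To be fair, the paper's own proof performs the same silent normalization (``up to conjugating'') and asserts that $\Gamma_0$ contains $C$ without argument; both proofs are really only valid under the extra assumption that $C$ is connected and contains (or passes through) the identity, which holds in all the uses of the lemma; a self-contained proof should either add that hypothesis or state the conclusion as $c\,(W^IA^{J'})\,c^{-1}$ for $c\in C$.
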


\begin{proof}
      In this proof, we will identify $A$ with its Lie algebra, and thus consider $A$ as a vector space. Up to conjugating, we can assume that the affine subspace of $A$ spanned by $C$ contains $0$.
      Let $\Gamma \subset W^I A$ denote the smallest closed subgroup containing all conjugates \mbox{$\{cW^Ic^{-1}, c \in C\}$.} Since $C$ is non-discrete, $\Gamma$ is not discrete and the linear part of $\Gamma$ is equal to $W^I$. So the identity component $\Gamma_0$ of $\Gamma$ is a vector subspace of $A^I$ containing $C$. Since $\Gamma_0$ is invariant under $W^I$, we deduce according to Lemma~\ref{WI_invariance} that $\Gamma_0=A^{J'}$, for some $J' \subset I$ such that $J'$ and $I \bs J'$ are orthogonal.
\end{proof}

\subsubsection{Generalized horocyclic decompositions} 

We will make use of the generalized Iwasawa decompositions of $G$, respectively the generalized horocyclic decompositions of $X$. 

\begin{lem} \label{lem:relative Iwasawa}
      For every $I \subset \Delta$ and $a^I \in A^I$, we have the following decomposition:
      \[
	  X=a^IK^I{a^I}^{-1}N_IA\cdot p_0,
      \] 
      where the $A$ component is unique up to the following condition: for every $a,a' \in A$, we have $a^IK^I{a^I}^{-1}N_Ia\cdot p_0 = a^IK^I{a^I}^{-1}N_Ia'\cdot p_0$ if and only if $(a^I)^{-1}a$ and $(a^I)^{-1}a'$ are conjugated by some element in $W^I$. The classical Iwasawa and horocyclic decompositions $G = NAK$ resp. $X= NA\cdot p_0$ correspond to $I = \emptyset$.
\end{lem}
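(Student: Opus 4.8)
\emph{Proof plan.} The idea is to bootstrap from the classical horocyclic decomposition $X=NA\cdot p_0$, applied also to the smaller semisimple group $G^I$. First I would record the relevant structure: $G^I$ is connected semisimple with finite center, $K^I=G^I\cap K$ is a maximal compact subgroup (both $G^I$ and $K$ being $\theta$-stable), $\mathfrak{a}^I$ is a maximal abelian subspace of $\mathfrak{p}\cap\mathfrak{g}^I$ with torus $A^I$, the restricted root system of $(\mathfrak{g}^I,\mathfrak{a}^I)$ is $\Sigma^I$ with positive system $\Sigma^I\cap\Sigma^+$, and the associated horospherical subgroup is $N^I:=\exp\big(\bigoplus_{\alpha\in\Sigma^I\cap\Sigma^+}\mathfrak{g}_\alpha\big)$, so that $N=N_IN^I$ with $N_I\trianglelefteq N$, i.e.\ $N\cong N_I\rtimes N^I$. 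I would also use that $A_I$ is central in the Levi $Z_G(A_I)\supseteq G^I$ (hence commutes with $K^I,A^I,N^I$), that $G^I$ normalizes the unipotent radical $N_I$ of $P_I$, and that $\Stab_{G^I}(p_0)=K^I$, so that $X^I:=G^I\cdot p_0$ is the totally geodesic subsymmetric space $G^I/K^I$, with its own horocyclic decomposition $X^I=N^IA^I\cdot p_0$ and Cartan decomposition $X^I=K^IA^I\cdot p_0$ (the latter with $A^I$-component unique up to $W^I$). Left translation by $(a^I)^{-1}$ is an isometry of $X$ carrying $a^IK^I(a^I)^{-1}$ to $K^I(a^I)^{-1}$, and since $(a^I)^{-1}$ lies in $A$ and normalizes $N_I$ it carries both assertions to the case $a^I=e$. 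So it suffices to prove $X=K^IN_IA\cdot p_0$, with $K^IN_Ib\cdot p_0=K^IN_Ib'\cdot p_0$ if and only if $b,b'\in A$ are $W^I$-conjugate (where $W^I$ acts on $A=A_IA^I$ through its action on $A^I$ and trivially on $A_I$).

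For existence: starting from $X=NA\cdot p_0$ and writing $N=N_IN^I$, $A=A_IA^I$, I commute $A_I$ past $N^I$ to get $X=N_IA_I(N^IA^I\cdot p_0)=N_IA_IX^I$; I apply the Cartan decomposition of $G^I$ to get $X=N_IA_IK^IA^I\cdot p_0$; and, using that $A_I$ commutes with $K^I$ and $K^I$ normalizes $N_I$, I rearrange to $K^IN_IA_IA^I\cdot p_0=K^IN_IA\cdot p_0$.

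For uniqueness, one direction is easy: if $a'=\tilde wa\tilde w^{-1}$ for a representative $\tilde w\in N_{K^I}(A^I)\subseteq K^I$ of $w\in W^I$ — which is exactly what $W^I$-conjugacy of $a,a'$ means — then $a'\cdot p_0=\tilde wa\cdot p_0$, and since $\tilde w\in K^I$ normalizes $N_I$ one has $K^IN_I\tilde w=K^IN_I$, hence $K^IN_Ia'\cdot p_0=K^IN_Ia\cdot p_0$. For the converse, suppose $k_1n_1a_1\cdot p_0=k_2n_2a_2\cdot p_0$; rearranging and using that $K^I$ normalizes $N_I$, this reduces to an identity $k\tilde na_1\cdot p_0=a_2\cdot p_0$ with $k\in K^I$, $\tilde n\in N_I$. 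Now I push $k$ to the right, $k\tilde n=(k\tilde nk^{-1})k$ with $k\tilde nk^{-1}\in N_I$; use that $A_I$ is central in $G^I$ to write $ka_1\cdot p_0=(a_1)_I\big(k(a_1)^I\cdot p_0\big)$; decompose $k(a_1)^I\cdot p_0=n^I\nu\cdot p_0$ inside $X^I$ with $n^I\in N^I,\ \nu\in A^I$; and commute $(a_1)_I$ past $n^I$, obtaining
\[
 k\tilde na_1\cdot p_0=\big((k\tilde nk^{-1})\,n^I\big)\cdot\big((a_1)_I\,\nu\big)\cdot p_0 .
\]
Here the first bracket lies in $N$ and the second in $A$, so the display is the horocyclic decomposition of the left-hand point; comparing with $a_2\cdot p_0$ and invoking uniqueness of the horocyclic decomposition of $X$ yields $(k\tilde nk^{-1})n^I=e$ and $(a_1)_I\nu=a_2$. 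The first equality, read in $N\cong N_I\rtimes N^I$, forces $n^I=e$, so $k(a_1)^I\cdot p_0=\nu\cdot p_0$ already lies in the flat $A^I\cdot p_0$; the second gives $(a_2)_I=(a_1)_I$ and $(a_2)^I=\nu$. Thus $k(a_1)^I\cdot p_0=(a_2)^I\cdot p_0$ with $k\in K^I$ and both points in $A^I\cdot p_0$, so uniqueness of the $A^I$-component in the Cartan decomposition of $G^I$ gives that $(a_1)^I$ and $(a_2)^I$ are $W^I$-conjugate; together with $(a_2)_I=(a_1)_I$ this says exactly that $a_1$ and $a_2$ are $W^I$-conjugate.

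The step I expect to be the main obstacle is the bookkeeping in the uniqueness argument: tracking which of $K^I,A_I,A^I,N_I,N^I$ centralizes or normalizes which, and invoking uniqueness of the classical horocyclic decomposition at precisely the right point so as to annihilate the $N^I$-component and reduce the residual ambiguity to the Cartan decomposition of $G^I$. The auxiliary structural facts about $G^I$ (that it is semisimple with the stated Iwasawa and Cartan data, and that $N=N_I\rtimes N^I$) are standard — they amount to the Langlands decomposition of $P_I$ and could alternatively be quoted from the literature — but the identification of the restricted root system of $(\mathfrak{g}^I,\mathfrak{a}^I)$ with $\Sigma^I$ deserves care.
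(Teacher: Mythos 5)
Your argument is correct and follows essentially the same route as the paper: reduce to $a^I=e$ by translating, combine a horospherical decomposition relative to $I$ with the Cartan decomposition $G^I=K^IA^IK^I$, and use that $A_I$ commutes with $K^I$ while $K^I$ and $A$ normalize $N_I$. The only difference is that the paper imports the refined decomposition $X=A_IN_IX^I$ with uniqueness of components from \cite[Cor.~2.16]{GJT}, whereas you rederive exactly that (existence and the uniqueness needed) from the classical Iwasawa decomposition of $G$, the splitting $N=N_I\rtimes N^I$, and the Iwasawa decomposition of $G^I$ -- a slightly more self-contained packaging of the same proof.
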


\begin{proof}
      Up to translating by ${a^I}^{-1}$, we can assume for simplicity that $a^I=e$. According to \cite[Corollary 2.16]{GJT}, we have the following generalized horocyclic decomposition: $X=A_IN_IX^I$, where $X^I$ is the relative symmetric space $X^I=G^I / K^I$ identified as the orbit $X^I = G^I \cdot p_0$ of $p_0$ in $X$. Furthermore, in this decomposition  $X=A_IN_IX^I = A_I N_I G^I \cdot p_0$, the components in $A_I$, $N_I$ and $X^I \simeq G^I \cdot p_0$ are unique.

The group $K^I$ is a maximal compact subgroup of the semisimple group $G^I$, and $A^I$ is a Cartan subgroup of $G^I$, so we can consider the Cartan decomposition of $G^I$ as $G^I = K^I A^I K^I$, where the component in $A^I$ is unique up to conjugation by some element in $W^I$. 

Fix some point $p \in X$. According to the two previous decompositions, we have $p=b_Iu_Ik^Ib^I \cdot p_0$, where $b_I \in A_I$, $u_I \in N_I$, $k^I \in K^I$ and $b^I \in A^I$, and furthermore $b_I$ and $u_I$ are unique and $b^I$ is unique up to conjugation by some element in $W^I$. Since $A_I$ commutes with $K^I$, we also have $p=(b_Iu_Ib_I^{-1})k^Ib_Ib^I \cdot p_0$. Furthermore, since $A^I$ and $K^IM$ normalize $N_I$, we have $(b_Iu_Ib_I^{-1})k^I \in K^IN_I$.

As a consequence, $p \in K^IN_I b_Ib^I \cdot p_0$, where $b_Ib^I \in A$ is unique up to conjugation by some element in $W^I$ (notice that $W^I$ commutes with $b_I \in A_I$).
\end{proof}

\subsubsection{Types of sequences and horofunctions}

We have seen in Lemma~\ref{K_invariance} that each function $\psi_{g \cdot p_0}$ is invariant under the conjuagte $gKg^{-1}$ of the maximal compact subgroup $K$. In order to study the invariance properties of horofunctions, we will use the study of limits of conjugates of $K$ (see~\cite[Chapter~IX]{GJT}). In order to describe such limits, we need to introduce the notion of type of a diverging sequence of elements in $A$. Roughly speaking, the type of a sequence encodes the roots "along which" the sequence goes to infinity.

\begin{defi} 
      A sequence $(a_n)_{n \in \NN}$ in $\overline{A^+}$ is said to be of \emph{type $(I,a^I)$}, where $I$ is a proper subset of $\Delta$ and $a^I \in A^I$, if
      \begin{itemize}
	  \item[i)] for $\alpha \in I$, $\lim_{n \rightarrow \infty} \alpha(\log a_n)$ exists and is equal to $\alpha(\log a^I)$,
	  \item[ii)] for $\alpha \in \Delta \bs I$ there holds $\alpha(\log a_n) \rightarrow +\infty$.
      \end{itemize}
\end{defi}

The main result on limits of conjugates of $K$ is the following.

\begin{prop}{\cite[Proposition~9.14]{GJT}} \label{limits of K}
Let $(a_n)_{n \in \NN}$ be a sequence in $\overline{A^+}$ of type $(I,a^I)$. In the space of closed subgroups of $G$, endowed with the Chabauty topology, the sequence $(a_nK{a_n}^{-1})_{n \in \NN}$ converges to $a^IK^IM(a^I)^{-1}N_I$.
\end{prop}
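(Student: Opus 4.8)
We may assume $G$ is connected. The plan is to work at the level of Lie algebras, using that $\Ad(a_n)$ acts on each restricted root space $\gg_\alpha$ by the scalar $e^{\alpha(\log a_n)}$, and then to lift the computation to the group level. First I would reduce to the case $a^I=e$: setting $a_n' = (a^I)^{-1}a_n$, the defining conditions become $\alpha(\log a_n')\to 0$ for $\alpha\in I$ and $\alpha(\log a_n')\to +\infty$ for $\alpha\in\Delta\setminus I$, and since conjugation by the fixed element $a^I$ is a homeomorphism of the space of closed subgroups with the Chabauty topology and $a_n K a_n^{-1} = a^I(a_n' K a_n'^{-1})(a^I)^{-1}$, it suffices to show $a_n' K a_n'^{-1}\to K^I M N_I$. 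So from now on $(a_n)$ has type $(I,e)$; note this forces the $\aa^I$-component of $\log a_n$ to tend to $0$, $\alpha(\log a_n)\to 0$ for every $\alpha\in\Sigma^+\cap\Sigma^I$, and $\alpha(\log a_n)\to +\infty$ for every $\alpha\in\Sigma^+\setminus\Sigma^I$.

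Next I would compute the limit of the Lie subalgebras $\Ad(a_n)\kk$ in the Grassmannian of $(\dim\kk)$-planes of $\gg$. Writing $\kk = \mm\oplus\bigoplus_{\alpha\in\Sigma^+}\kk_\alpha$ with $\kk_\alpha = \{X+\theta X : X\in\gg_\alpha\}$ ($\theta$ the Cartan involution), and using that $\Ad(a_n)$ fixes $\mm$ and sends $X+\theta X$ to $e^{\alpha(\log a_n)}X + e^{-\alpha(\log a_n)}\theta X$, one sees after rescaling each root line by $e^{-\alpha(\log a_n)}$ that $\Ad(a_n)\kk_\alpha\to\kk_\alpha$ when $\alpha\in\Sigma^+\cap\Sigma^I$ and $\Ad(a_n)\kk_\alpha\to\gg_\alpha$ when $\alpha\in\Sigma^+\setminus\Sigma^I$; since all these spaces stay inside the fixed complementary pieces $\gg_\alpha\oplus\gg_{-\alpha}$, the sum stays direct and $\Ad(a_n)\kk$ converges to $\mm\oplus\bigoplus_{\Sigma^+\cap\Sigma^I}\kk_\alpha\oplus\bigoplus_{\Sigma^+\setminus\Sigma^I}\gg_\alpha$, which is exactly $\mathrm{Lie}(K^I M N_I)$ (recall $\kk\cap(\gg_0\oplus\bigoplus_{\Sigma^I}\gg_\alpha)=\mathrm{Lie}(Z_K(A_I))=\mathrm{Lie}(K^I M)$ and $\mathrm{Lie}(N_I)=\bigoplus_{\Sigma^+\setminus\Sigma^I}\gg_\alpha$). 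A short general fact — if the Lie algebras of closed subgroups $H_n$ converge in the Grassmannian, then the Lie algebra of any Chabauty limit equals that limit — then shows that any Chabauty limit $L$ of a subsequence of $(a_n K a_n^{-1})$ satisfies $\mathrm{Lie}(L)=\mathrm{Lie}(K^I M N_I)$, so $L$ and $K^I M N_I$ have the same identity component.

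For the inclusion $K^I M N_I\subseteq L$ I would exhibit explicit convergent sequences: each element of $M = Z_K(A)$ commutes with every $a_n$ and hence already lies in $a_n K a_n^{-1}$; for $Y\in\mathrm{Lie}(K^I)$ one has $a_n\exp(Y)a_n^{-1} = \exp(\Ad(a_n)Y)\to\exp(Y)$ because $\Ad(a_n)\to\mathrm{id}$ on $\mathrm{Lie}(Z_K(A_I))$, and $\exp$ maps onto the connected compact group $K^I$; and for $X_\alpha\in\gg_\alpha$ with $\alpha\in\Sigma^+\setminus\Sigma^I$, the element $\exp\!\big(e^{-\alpha(\log a_n)}(X_\alpha+\theta X_\alpha)\big)\in K$ is conjugated by $a_n$ to $\exp\!\big(X_\alpha + e^{-2\alpha(\log a_n)}\theta X_\alpha\big)\to\exp(X_\alpha)$, so $N_I\subseteq L$ since $L$ is a closed subgroup. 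Combining these gives $K^I M N_I\subseteq L$.

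The reverse inclusion $L\subseteq K^I M N_I$ is the main obstacle: a priori a Chabauty limit can be strictly larger than expected (dimensions may jump, new components may appear), so it must be controlled directly via the parabolic $P_I = Z_G(A_I)N_I$, for which $K\cap P_I = Z_K(A_I) = K^I M$ and $G/P_I\cong K/Z_K(A_I)$. Given a convergent sequence $a_n k_n a_n^{-1}\to x$ with $k_n\in K$, I would analyse its image $a_n k_n a_n^{-1}P_I = a_n k_n P_I$ in $G/P_I$: because $\Ad(a_n)$ contracts the opposite nilradical $\overline{\nn_I}$, convergence of $a_n k_n a_n^{-1}$ forces $k_nP_I$ to lie eventually in the big cell $\overline{N_I}P_I/P_I$ and pushes $a_n k_n P_I$ to the base coset, whence $x\in\cl(P_I)=P_I$. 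Writing $L = (L\cap Z_G(A_I))\ltimes N_I$ inside $P_I = Z_G(A_I)\ltimes N_I$, the Levi part $H:=L\cap Z_G(A_I)$ is a closed subgroup of the reductive group $Z_G(A_I)$ containing its maximal compact $Z_K(A_I)$ with $\mathrm{Lie}(H)=\mathrm{Lie}(Z_K(A_I))$; a final argument shows $H$ cannot contain unbounded elements — pulling them back through the $a_n$-conjugations would contradict convergence — so $H = Z_K(A_I)$ and $L = K^I M N_I$. As this holds for every convergent subsequence and the space of closed subgroups is compact, the whole sequence $(a_n K a_n^{-1})$ converges to $K^I M N_I$, and conjugating back by $a^I$ gives the statement. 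The genuine work is in the big-cell argument and in ruling out the extra components of $H$.
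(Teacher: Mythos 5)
First, note that the paper itself does not prove this statement: it is quoted as Proposition~9.14 of \cite{GJT}, so there is no in-paper argument to match yours against; what follows measures your sketch against what such a proof actually requires. The parts of your proposal that are solid are the reduction to $a^I=e$, the Grassmannian computation of $\lim \Ad(a_n)\kk$, and the lower bound: $M$ commutes with every $a_n$, $\Ad(a_n)\to\mathrm{id}$ on the Lie algebra of $Z_K(A_I)$, and your conjugated elements $\exp\bigl(e^{-\alpha(\log a_n)}(X_\alpha+\theta X_\alpha)\bigr)$ do limit onto $\exp(X_\alpha)$, so every Chabauty sublimit $L$ contains $K^IMN_I$; combined with compactness of the space of closed subgroups this half is complete.

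The genuine gap is in the upper bound $L\subseteq K^IMN_I$, and it occurs twice. First, your ``short general fact'' is false: convergence of the Lie algebras of $H_n$ in the Grassmannian does not control the Lie algebra of a Chabauty limit. For the circle subgroups of slopes $p_n/q_n$ in the $2$-torus, the Lie algebras converge to an irrational line while the Chabauty limit is the whole torus; nothing a priori exempts conjugates of a maximal compact subgroup from such dimension jumps. Hence the assertions $\mathrm{Lie}(L)=\mathrm{Lie}(K^IMN_I)$, ``same identity component'', and later $\mathrm{Lie}(H)=\mathrm{Lie}(Z_K(A_I))$ are unjustified, and they are exactly what the proposition claims. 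Second, the two dynamical statements that would carry the upper bound are only asserted: (i) that convergence of $a_nk_na_n^{-1}$ forces $k_nP_I$ eventually into the big cell with $a_nk_nP_I\to eP_I$ --- the contraction of $a_n$ is uniform only on compact subsets of the big cell, so one must exclude $k_nP_I$ degenerating to the boundary at a rate competing with the contraction, which requires quantitative estimates (e.g.\ orthogonality relations for $k_n\in K$ in a suitable representation); and (ii) that $H=L\cap Z_G(A_I)$ contains ``no unbounded elements''. Even granting the Lie algebra equality, (ii) as phrased does not suffice: one must still exclude closed noncompact pieces invisible to the Lie algebra, such as a discrete subgroup generated by a nontrivial element of $A_I$ or by a unipotent element of $G^I$ (the latter has all eigenvalues equal to $1$, so even the useful observation that the spectrum of $a_nk_na_n^{-1}$ equals that of $k_n$ and hence stays on the unit circle, which does rule out limits in $A$, does not rule it out). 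These estimates are precisely the content of the cited Proposition~9.14 of \cite{GJT}, so the essential half of the proof is missing from your proposal.
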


Recall that we gave a short definition of the Chabauty topology after Theorem~\ref{thm:walsh}.

\begin{rem}
Since the groups $a^IK^IM{a^I}^{-1}N_I$ arise as limits of the maximal compact subgroups under conjugations by sequences of type $I$ in $A$, 
the (generalized) Iwasawa decompositions can thus be seen as  limits of the Cartan decomposition.
\end{rem}

We will now use this result to deduce some invariance for horofunctions.

\begin{lem} \label{lem:limit invariance} 
      Let $(a_n)_{n \in \NN}$ be a sequence in $\overline{A^+}$ of type $(I,a^I)$ such that $(\psi^X_{a_n \cdot p_0})_{n \in \NN}$ converges to $\xi$. Then $\xi$ is $a^IK^IM(a^I)^{-1}N_I$-invariant.
\end{lem}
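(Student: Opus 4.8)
The strategy is to combine Lemma~\ref{K_invariance} (which says $\psi^X_{a_n\cdot p_0}$ is invariant under $a_nKa_n^{-1}$) with Proposition~\ref{limits of K} (which identifies the Chabauty limit of the groups $a_nKa_n^{-1}$ as $a^IK^IM(a^I)^{-1}N_I$), via a continuity argument: invariance of a function under a group is a closed condition when we let both the function and the group vary continuously. So the whole proof is a limiting argument, and the only real work is making the joint continuity precise.

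\begin{proof}
By Proposition~\ref{limits of K}, in the space of closed subgroups of $G$ equipped with the Chabauty topology, the sequence $(a_nK{a_n}^{-1})_{n\in\NN}$ converges to $L\coloneqq a^IK^IM(a^I)^{-1}N_I$. By Lemma~\ref{K_invariance}, for each $n$ the function $\psi^X_{a_n\cdot p_0}$ is invariant under $a_nK{a_n}^{-1}$.

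Fix $g\in L$ and $x\in X$; we must show $\xi(g\cdot x)=\xi(x)$. By the definition of the Chabauty topology, since $g\in L=\lim_n a_nK{a_n}^{-1}$, there exist $g_n\in a_nK{a_n}^{-1}$ with $g_n\to g$ in $G$. Then, using the invariance of $\psi^X_{a_n\cdot p_0}$ under $a_nK{a_n}^{-1}$, we have for every $n$
\[
    \psi^X_{a_n\cdot p_0}(g_n\cdot x)=\psi^X_{a_n\cdot p_0}(x).
\]
Now I pass to the limit on both sides. On the right, $\psi^X_{a_n\cdot p_0}(x)\to\xi(x)$ since $\psi^X_{a_n\cdot p_0}\to\xi$ in $\widetilde{C}(X)$, i.e. uniformly on $d_{sym}$-bounded sets. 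On the left, write
\[
    \psi^X_{a_n\cdot p_0}(g_n\cdot x)-\xi(g\cdot x)
    =\big(\psi^X_{a_n\cdot p_0}(g_n\cdot x)-\xi(g_n\cdot x)\big)+\big(\xi(g_n\cdot x)-\xi(g\cdot x)\big).
\]
Since $g_n\to g$ and the action $G\acts X$ is continuous, the points $g_n\cdot x$ converge to $g\cdot x$, hence they lie in a $d_{sym}$-bounded set; therefore the first bracket tends to $0$ by uniform convergence of $\psi^X_{a_n\cdot p_0}$ to $\xi$ on that set, and the second bracket tends to $0$ by continuity of the horofunction $\xi$. Combining, $\psi^X_{a_n\cdot p_0}(g_n\cdot x)\to\xi(g\cdot x)$, and comparing the two limits gives $\xi(g\cdot x)=\xi(x)$. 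As $g\in L$ and $x\in X$ were arbitrary, $\xi$ is $L$-invariant.
\end{proof}

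The step I expect to be the only delicate point is the control of the left-hand side: one needs that the convergence $\psi^X_{a_n\cdot p_0}\to\xi$ is good enough to allow the argument $g_n\cdot x$ to vary with $n$, which is exactly why the topology on $\widetilde{C}(X)$ is taken to be uniform convergence on $d_{sym}$-bounded sets rather than mere pointwise convergence; once the sequence $(g_n\cdot x)_n$ is confined to a bounded set this is immediate. The extraction of the approximating elements $g_n\in a_nK{a_n}^{-1}$ with $g_n\to g$ is precisely the definition of Chabauty convergence, so no separate argument is needed there.
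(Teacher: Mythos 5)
Your proof is correct and follows essentially the same route as the paper: combine the $a_nKa_n^{-1}$-invariance of $\psi^X_{a_n\cdot p_0}$ (Lemma~\ref{K_invariance}) with the Chabauty convergence of Proposition~\ref{limits of K} and pass to the limit. The only (harmless) difference is how the limit interchange is justified -- you use uniform convergence on bounded sets together with continuity of $\xi$, whereas the paper replaces $g\cdot p$ by $a_nk_na_n^{-1}\cdot p$ inside the limit, implicitly using the triangle inequality for the distance functions.
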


\begin{proof}
      For each $n \in \NN$, the function $\psi^X_{a_n \cdot p_0}$ is invariant under $a_nKa_n^{-1}$. Since the sequence $(a_nKa_n^{-1})_{n \in \NN}$ converges to $a^IK^IM(a^I)^{-1}N_I$ in the Chabauty topology (see~Proposition~\ref{limits of K}), for every $g \in a^IK^IM(a^I)^{-1}N_I$ there exists a sequence $(k_n)_{n \in \NN}$ in $K$ such that the sequence $(a_nk_na_n^{-1})_{n \in \NN}$ converges to $g$. Therefore, for every $p \in X$ we have
      \begin{eqnarray*} 
	  \xi(g \cdot p)-\xi(p) &=& \lim_{n \rightarrow +\infty} d(a_n \cdot p_0,g \cdot p)-d(a_n \cdot p_0,p) \\
	  &=& \lim_{n \rightarrow +\infty} d(a_n \cdot p_0,a_nk_na_n^{-1} \cdot p)-d(a_n \cdot p_0,p) = 0. 
      \end{eqnarray*}
      As a consequence, $\xi$ is invariant under $a^IK^IM(a^I)^{-1}N_I$.
\end{proof}

\begin{defi} 
      A horofunction $\eta \in \partial \overline{\psi^F(F^+)}^{\widetilde{C}(F)}$ is said to be of \emph{type $(I,a^I)$}, where $I$ is a proper subset of $\Delta$ and $a^I \in A^I$, if there exists an almost geodesic sequence $(a_n)_{n \in \NN}$ in $A$ of type $(I,a^I)$ such that the sequence $(\psi^F_{a_n \cdot p_0})_{n \in \NN}$ converges to $\eta$ in $\widetilde{C}(F)$. Note that, since we assumed that every horofunction is a Busemann point, a horofunction may have several types, but has at least one type.
\end{defi}

\begin{lem} \label{lem:intersectiontype}
      Let $\eta \in \partial \overline{\psi^F(F^+)}^{\widetilde{C}(F)}$ be a horofunction which has two types $(I,a^I)$ and $(J,b^J)$, with $I, J \subset \Delta$ and $a^I \in A^I, b^J, \in A^J$. Then $\eta$ also has type $(I \cap J, c^{I \cap J})$ for some $c^{I \cap J} \in A^{I \cap J}$.
\end{lem}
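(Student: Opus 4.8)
The plan is to realize $\eta$ as a limit of $\psi^F$ along the sequence of midpoints of a sequence of type $(I,a^I)$ and a sequence of type $(J,b^J)$, using the Convexity Lemma to keep the limit under control. Fix sequences $(a_n)_{n\in\NN}$ and $(b_n)_{n\in\NN}$ in $\overline{A^+}$, of type $(I,a^I)$ and $(J,b^J)$ respectively, with $\psi^F_{a_n\cdot p_0}\to\eta$ and $\psi^F_{b_n\cdot p_0}\to\eta$ in $\widetilde C(F)$; such sequences exist by the definition of the type of a horofunction. Identify the flat $F=A\cdot p_0$ with the normed vector space $(\aa,\norm)$ via $H\mapsto\exp(H)\cdot p_0$, so that $\psi^F$ becomes the horofunction embedding $\psi$ of $(\aa,\norm)$, and set $H_n\coloneqq\log a_n$, $H_n'\coloneqq\log b_n$, and $c_n\coloneqq\exp\!\big(\tfrac12(H_n+H_n')\big)$. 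Since the closed Weyl chamber $\overline{\aa^+}$ is convex and $H_n,H_n'\in\overline{\aa^+}$, we get $\tfrac12(H_n+H_n')\in\overline{\aa^+}$, hence $c_n\in\overline{A^+}$.

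First I would check that $(c_n)_{n\in\NN}$ has type $(I\cap J,c^{I\cap J})$ for a suitable $c^{I\cap J}\in A^{I\cap J}$. For $\alpha\in I\cap J$, the two type hypotheses give $\alpha(\log a_n)\to\alpha(\log a^I)$ and $\alpha(\log b_n)\to\alpha(\log b^J)$, so $\alpha(\log c_n)\to\tfrac12\big(\alpha(\log a^I)+\alpha(\log b^J)\big)$. The linear map $\aa^{I\cap J}\to\RR^{I\cap J}$, $H\mapsto(\alpha(H))_{\alpha\in I\cap J}$, is injective, since its kernel is $\aa^{I\cap J}\cap\aa_{I\cap J}=\{0\}$, and hence an isomorphism because $\dim\aa^{I\cap J}=|I\cap J|$; therefore there is a unique $c^{I\cap J}\in A^{I\cap J}$ with $\alpha(\log c^{I\cap J})=\tfrac12\big(\alpha(\log a^I)+\alpha(\log b^J)\big)$ for every $\alpha\in I\cap J$. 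For $\alpha\in\Delta\setminus(I\cap J)$ we have $\alpha\notin I$ or $\alpha\notin J$; in the first case $\alpha(\log a_n)\to+\infty$ while $\alpha(\log b_n)\ge 0$, because $\alpha$ is a simple root and $H_n'\in\overline{\aa^+}$, so $\alpha(\log c_n)\ge\tfrac12\,\alpha(\log a_n)\to+\infty$, and the second case is symmetric. Since $I\cap J\subset I$ is a proper subset of $\Delta$, this shows that $(c_n)_{n\in\NN}$ has type $(I\cap J,c^{I\cap J})$.

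It then remains to show $\psi^F_{c_n\cdot p_0}\to\eta$. This is exactly the Convexity Lemma applied in the normed vector space $(\aa,\norm)$ with $x_n=H_n$, $z_n=H_n'$, $t_n=\tfrac12$ and $\xi=\eta$: since $\psi_{H_n}=\psi^F_{a_n\cdot p_0}\to\eta$ and $\psi_{H_n'}=\psi^F_{b_n\cdot p_0}\to\eta$, and $d$ satisfies the Convexity Lemma by assumption (see Section~\ref{sec:convexity}), we obtain $\psi_{\frac12(H_n+H_n')}\to\eta$, i.e.\ $\psi^F_{c_n\cdot p_0}\to\eta$. By the definition of the type of a horofunction, $\eta$ therefore has type $(I\cap J,c^{I\cap J})$, which is the claim.

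There is no genuinely hard step in this argument; the only points requiring care are the bookkeeping in the identification $F\cong(\aa,\norm)$ and the verification of the $(I\cap J)$-type conditions for the midpoint sequence — in particular the use of positivity of the simple roots on $\overline{\aa^+}$ to force $\alpha(\log c_n)\to+\infty$ for every $\alpha\in\Delta\setminus(I\cap J)$. The essential input is the Convexity Lemma, whose validity for the Finsler norm on $F$ is precisely the standing assumption of this section.
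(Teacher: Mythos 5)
Your proof is correct and is essentially the paper's own argument: both form the midpoint sequence $c_n=\exp(\tfrac12\log a_n+\tfrac12\log b_n)$ of the two type-$(I,a^I)$ and type-$(J,b^J)$ sequences converging to $\eta$, check it has type $(I\cap J,c^{I\cap J})$, and invoke the Convexity Lemma to conclude $\psi^F_{c_n\cdot p_0}\to\eta$. Your only addition is to spell out the verification of the type conditions for $(c_n)$, which the paper states without detail.
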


\begin{proof}
      Let $(a_n)_{n \in \NN}$ and $(b_n)_{n \in \NN}$ be two almost geodesic sequences in $A$ of different types $(I,a^I)$ and $(J,b^J)$ respectively, such that the sequences $(\psi^F_{a_n \cdot p_0})_{n \in \NN}$ and $(\psi^F_{b_n \cdot p_0})_{n \in \NN}$ both converge to $\eta$. For every $n \in \NN$, we define \linebreak
      $c_n = \exp(\frac{1}{2}\log(a_n)+\frac{1}{2}\log(b_n))$. The sequence $(c_n)_{n \in \NN}$ has type $(I \cap J, c^{I \cap J})$, where $c^{I \cap J} \in \left(\exp(\frac{1}{2}\log(a^I)+\frac{1}{2}\log(b^J)) A_{I \cap J}\right) \cap A^{I \cap J}$. According to the Convexity Lemma (Lemma \ref{lem:convexity lemma}), the sequence $(\psi^A_{c_n \cdot p_0})_{n \in \NN}$ also converges to $\eta$. As a consequence, $\eta$ has type $(I \cap J, c^{I \cap J})$.
\end{proof}

\begin{lem} \label{lem:eta smaller type}
      Let $\eta \in \partial \overline{\psi^F(F^+)}^{\widetilde{C}(F)}$ be a horofunction of type $(I,a^I)$, where $I \subsetneq \Delta$ and $a^I \in A^I$. If $\eta$ is invariant under $A^{J'}$ with $J' \subset I$, then $\eta$ has type $(I \bs J',c^{I \bs J'})$ for some $c^{I \bs J'} \in A^{I \bs J'}$.
\end{lem}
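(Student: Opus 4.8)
The plan is to realize $\eta$ as a limit of a sequence $(a_n)$ in $A$ of type $(I,a^I)$ and then use the extra $A^{J'}$-invariance to replace $(a_n)$ by a new sequence that moves to infinity also along the roots in $J'$, producing a type $(I\bs J', c^{I\bs J'})$. First I would fix a sequence $(a_n)_{n\in\NN}$ in $\overline{A^+}$ of type $(I,a^I)$ with $\psi^F_{a_n\cdot p_0}\to\eta$; after identifying $A$ with $\aa$, write $\log a_n = \log a^I + r_n + s_n$ with $r_n\in\aa^I$ bounded (indeed $r_n\to 0$ from condition i) of the type) and $s_n\in\aa_I$ satisfying $\alpha(s_n)\to+\infty$ for all $\alpha\in\Delta\bs I$. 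Pick a vector $v\in\aa^{J'}$ lying in the positive chamber direction for the roots in $J'$ — concretely, $v$ with $\alpha(v)>0$ for $\alpha\in J'$ and $\alpha(v)=0$ for $\alpha\in I\bs J'$, which exists because $\aa^{J'}\subset\aa^{I\bs J'}$ reads off exactly the $J'$-coordinates and these can be chosen positive. Set $b_n\coloneqq\exp(\log a_n + n v)$. Then $(b_n)$ has type $(I\bs J', c^{I\bs J'})$ for a suitable $c^{I\bs J'}\in A^{I\bs J'}$: along $\alpha\in\Delta\bs I$ nothing changes qualitatively (still $\to+\infty$, using $\alpha(v)\ge 0$ after a mild choice of $v$), along $\alpha\in J'$ we now have $\alpha(\log b_n)=\alpha(\log a_n)+n\alpha(v)\to+\infty$, and along $\alpha\in I\bs J'$ we have $\alpha(\log b_n)=\alpha(\log a_n)\to\alpha(\log a^I)$.

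It then remains to check $\psi^F_{b_n\cdot p_0}\to\eta$. This is where the $A^{J'}$-invariance of $\eta$ enters, and it is the heart of the argument. The point $b_n\cdot p_0$ equals $\exp(nv)\cdot(a_n\cdot p_0)$, and $\exp(nv)\in A^{J'}$. The idea is that translating the source sequence by elements of the subgroup under which the limit horofunction is already invariant does not change the limit. More precisely, I would argue as follows. Since $\eta$ is $A^{J'}$-invariant, for every $p\in F$ and every $t\in\RR$ we have $\eta(\exp(tv)\cdot p)=\eta(p)$. Now compute, for fixed $p$,
\[
\psi^F_{b_n\cdot p_0}(p)=d(p,\exp(nv)a_n\cdot p_0)-d(p_0,\exp(nv)a_n\cdot p_0).
\]
Applying the isometry $\exp(-nv)$ to both arguments in each distance,
\[
\psi^F_{b_n\cdot p_0}(p)=d(\exp(-nv)\cdot p,\,a_n\cdot p_0)-d(\exp(-nv)\cdot p_0,\,a_n\cdot p_0)
=\psi^F_{a_n\cdot p_0}(\exp(-nv)\cdot p)-\psi^F_{a_n\cdot p_0}(\exp(-nv)\cdot p_0).
\]
The trouble is that the argument $\exp(-nv)\cdot p$ is itself escaping to infinity, so I cannot invoke pointwise convergence of $\psi^F_{a_n\cdot p_0}$ directly. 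The clean way around this is to pass through the Convexity Lemma (Lemma~\ref{lem:convexity lemma}) exactly as in the proof of Lemma~\ref{lem:intersectiontype}: produce, by averaging or interpolating, a sequence ``between'' $(a_n\cdot p_0)$ and its $A^{J'}$-translates that is forced to converge to the same horofunction. Concretely, set $a_n'\coloneqq\exp(\log a^I + s_n')\cdot p_0$ for a suitable diverging sequence $s_n'$ realizing the pure type $(I\bs J', a^I)$ extended with large $J'$-coordinates (a second reference sequence converging to $\eta$, which exists because $\eta$ is $A^{J'}$-invariant and hence, morally, "does not see" the $\aa^{J'}$ direction); then $b_n$ lies on the segment between $a_n$ and something $A^{J'}$-equivalent to $a_n$, and the Convexity Lemma — together with the $A^{J'}$-invariance identifying the endpoints' limits — forces $\psi^F_{b_n\cdot p_0}\to\eta$.

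The main obstacle I anticipate is precisely this last convergence: making rigorous the slogan "shifting by the invariance subgroup does not move the limit". The subtlety is that in the flat $F$ the relevant horofunctions live on a normed vector space and the $A^{J'}$-invariance of $\eta=h_{E,p}$ translates (via the description in Theorem~\ref{mainthrmbusem} and the formula $h_{E,p}=h_{E,p^F}$) into the statement that $V(F)=V(E^\circ)$ already contains the subspace $\aa^{J'}$ — equivalently, the proper face $F\subset B$ defining $\eta$ spans a subspace containing $\aa^{J'}$. Once this is observed, condition iii) of Theorem~\ref{mainthrmbusem} is unaffected by shifting $z_n$ inside $\aa^{J'}\subset V(F)$ (the orthogonal part $z_n^F$ is unchanged), and conditions i), ii) for the new sequence follow because adding $nv$ with $v$ in the chamber direction only pushes $z_{n,F}$ deeper into the interior of the cone $K_F$ — the distance to $\delrel K_F$ still tends to infinity. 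So in the end the cleanest route is: translate the $A^{J'}$-invariance of $\eta$ into the statement $\aa^{J'}\subset V(F)$, choose $v\in\aa^{J'}$ in the positive chamber direction, set $b_n=\exp(\log a_n+nv)$, verify directly via Theorem~\ref{mainthrmbusem} that $\psi^F_{b_n\cdot p_0}\to\eta$, and read off that the type of $(b_n)$ is $(I\bs J', c^{I\bs J'})$ with $c^{I\bs J'}$ the $A^{I\bs J'}$-component of $a^I$. Lemma~\ref{WI_invariance} or Lemma~\ref{lem:invariance subgroup} may be invoked to guarantee that $J'$ and $I\bs J'$ are orthogonal so that the decomposition $A^I=A^{J'}A_{I}\cap\dots$ used above is valid and $c^{I\bs J'}$ is well defined.
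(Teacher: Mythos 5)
Your overall idea (push a type-$(I,a^I)$ sequence off to infinity along $A^{J'}$ and use the invariance to keep the limit) is the same as the paper's, but your implementation has genuine gaps, and the paper's actual proof avoids them by a different mechanism. The paper argues purely metrically: for a \emph{fixed} translation $c^k$ with $c \in (A^{J'})^+$, the sequence $(\psi^F_{c^k a_n\cdot p_0})_n$ converges to $c^k\cdot\eta=\eta$ (translation is an isometry and $\eta$ is $A^{J'}$-invariant), and then one extracts a diagonal sequence $c^k a_{n_k}$ with $n_k$ chosen \emph{after} $k$; no polyhedral machinery is used, which matters because this section only assumes the Convexity Lemma, not polyhedrality. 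Your construction $b_n=\exp(\log a_n+nv)$ couples the translation rate to $n$, and that is where it breaks. First, the required $v$ need not exist: if $v\in\aa^{J'}$ satisfies $\alpha(v)>0$ for all $\alpha\in J'$, then $v$ is a nonnegative combination of the coroots of $J'$, hence $\alpha(v)\le 0$ for \emph{every} simple root $\alpha\notin J'$; so your ``mild choice'' with $\alpha(v)\ge 0$ on $\Delta\setminus I$ is generally impossible, and since $\alpha(\log a_n)$ may diverge sublinearly for $\alpha\in\Delta\setminus I$, the values $\alpha(\log b_n)=\alpha(\log a_n)+n\alpha(v)$ can tend to $-\infty$, so $(b_n)$ need not have type $(I\setminus J',\cdot)$ at all (the same issue forces the orthogonality of $J'$ and $I\setminus J'$, which you only mention in passing). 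Second, the key claim that adding $nv$ ``only pushes $z_{n,F}$ deeper into $K_F$'' is false: invariance of $h_{E,p}$ under translation by $v$ says exactly that $v$ is parallel to the affine hull of $F=E^\circ$ (not that $\aa^{J'}\subset V(F)$, which is a different statement), and translating parallel to the face at a rate comparable to $d(z_{n,F},\delrel K_F)$ can change the limit. Concretely, for the $L^1$ norm on $\RR^2$, $h_{\{b_1\},p}$ is invariant under $v=(1,-1)$ and $a_n=(n,n)$ converges to it, yet $a_n+nv=(2n,0)$ converges to $h_{H_{a_1},0}$; so conditions i)--ii) of Theorem~\ref{mainthrmbusem} do not survive your substitution without a rate comparison.

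Your fallback route through the Convexity Lemma is circular as written: the ``second reference sequence realizing the pure type $(I\setminus J',\cdot)$ \dots which exists because $\eta$ is $A^{J'}$-invariant'' is precisely what Lemma~\ref{lem:eta smaller type} asks you to produce. A correct repair of your route in the polyhedral case would be to translate by $t_nv$ with $t_n\to\infty$ chosen small compared with both $\min_{\alpha\in\Delta\setminus I}\alpha(\log a_n)$ and $d(a_{n,F},\delrel K_F)$, and to assume the orthogonality of $J'$ and $I\setminus J'$; but this decoupling of rates is exactly the content of the paper's diagonal argument, which moreover works for any invariant Finsler metric and not only polyhedral ones.
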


\begin{proof}
      Fix $c \in {A^{J'}}^+$. For each $k \in \NN$, the sequence $(\psi^F_{c^k a_n \cdot p_0})_{n \in \NN}$ converges to $c^k \cdot \eta=\eta$, since $\eta$ is invariant under $A^{J'}$. As a consequence, there exists $n_k \in \NN$ such that, for every $n \geq n_k$, and for every $a \in A$ such that $d(p_0,a \cdot p_0) \leq k$, we have 
      \[
	  |d(a \cdot p_0,c^k a_n \cdot p_0) - d(a \cdot p_0,a_n \cdot p_0)| \leq \frac{1}{k+1}.
      \]
      We can furthermore assume that the sequence $(n_k)_{k \in \NN}$ is increasing. Fix $a \in A$. For every $k \geq d(p_0,a \cdot p_0)$, we have $|d(a \cdot p_0,c^k a_{n_k} \cdot p_0)-d(a \cdot p_0,a_{n_k} \cdot p_0)|$ $\leq \frac{1}{k+1}$ and $|d(p_0,c^k a_{n_k} \cdot p_0)-d(p_0,a_{n_k} \cdot p_0)| \leq \frac{1}{k+1}$. Therefore, we have 
      \begin{align*}
	  &\lim_{k \rightarrow +\infty} d(a \cdot p_0, c^k a_{n_k} \cdot p_0)-d(p_0,c^k a_{n_k} \cdot p_0)\\
		= &\lim_{k \rightarrow +\infty} d(a \cdot p_0,a_{n_k} \cdot p_0)-d(p_0,a_{n_k} \cdot p_0) \\
	  = & \ \eta(a \cdot p_0)-\eta(p_0).
      \end{align*}
      As a consequence, the sequence $(\psi^F_{c^k a_{n_k} \cdot p_0})_{k \in \NN}$ converges to $\eta$.
      
      To conclude, observe that the sequence $(c^k a_{n_k})_{k \in \NN}$ has type $(I \bs J',c^{I \bs J'})$, for some $c^{I \bs J'} \in A^{I \bs J'}$.
\end{proof}

\subsection{The intrinsic compactification versus the closure of a flat}
In this section we define an explicit map from the intrinsic compactification of the flat $F$ into the horofunction compactification of $X$. For this we use the invariance shown in Lemma \ref{lem:limit invariance} and the generalized horocyclic decomposition $X=a^IK^I{a^I}^{-1}N_IA\cdot p_0$ in Lemma~\ref{lem:relative Iwasawa}.

\begin{thrm} \label{thm:embedding flat in X} 
      The following map
      \begin{eqnarray*}
	  \phi : \overline{\psi^F(F^+)}^{\widetilde{C}(F)}& \longrightarrow & \overline{\psi^X(X)}^{\widetilde{C}(X)} \\
	  \psi^F_z, \mbox{ where }z \in F^+  & \longmapsto & \psi^X_z \\
	  \eta \mbox{ of type } (I,a^I) & \longmapsto &  \left(a^Ik^I{a^I}^{-1}u_Ia \cdot p_0 \in X \mapsto \eta(a \cdot p_0)\right)
      \end{eqnarray*}
      is a well-defined, continuous embedding.
\end{thrm}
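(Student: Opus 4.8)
The plan is to verify the three assertions --- well-definedness, continuity, and injectivity (embedding) --- in sequence, reducing each to the structural lemmas established above. For well-definedness, the subtle point is that the formula for $\phi(\eta)$ depends a priori on the chosen type $(I,a^I)$ of $\eta$, and also on the decomposition $a^Ik^I{a^I}^{-1}u_Ia\cdot p_0$ of a point of $X$, which by Lemma~\ref{lem:relative Iwasawa} is only unique up to $W^I$-conjugacy of the $A$-component. So I would proceed in two steps. First, fix a type $(I,a^I)$ and show the recipe $a^Ik^I{a^I}^{-1}u_Ia\cdot p_0 \mapsto \eta(a\cdot p_0)$ is unambiguous: if $(a^I)^{-1}a$ and $(a^I)^{-1}a'$ are $W^I$-conjugate, then $\eta(a\cdot p_0) = \eta(a'\cdot p_0)$, which follows because a horofunction of type $(I,a^I)$ arises as a limit $\psi^X_{a_n\cdot p_0} \to \phi(\eta)$ (via the embedding $\phi$ restricted to $F$, or rather as the extension of $\eta$), and Lemma~\ref{lem:limit invariance} tells us $\phi(\eta)$ is $a^IK^IM(a^I)^{-1}N_I$-invariant, in particular $W^I$-invariant on the $A$-orbit through $a^I$; one must be a little careful to phrase the $W^I$-invariance of $\eta$ itself on $F$, but this is exactly what the relation in Lemma~\ref{lem:relative Iwasawa} together with Lemma~\ref{lem:limit invariance} delivers. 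Second, show independence of the type: if $\eta$ has two types $(I,a^I)$ and $(J,b^J)$, then by Lemma~\ref{lem:intersectiontype} it has type $(I\cap J, c^{I\cap J})$, and one checks the formulas built from all three types agree on $X$ --- here the key is that the generalized horocyclic decomposition associated to $I\cap J$ refines those associated to $I$ and to $J$, so that a point written in $I\cap J$-coordinates can be rewritten in $I$-coordinates, and the two prescriptions for $\phi(\eta)$ coincide because of the extra invariance forced by having the smaller type. This compatibility is, I expect, the main obstacle: it requires carefully tracking how the components $a^I, k^I, u_I, a$ transform when passing between the decompositions for $I$, $J$ and $I\cap J$, and invoking Lemma~\ref{lem:eta smaller type} in the form that a horofunction of type $(I,a^I)$ invariant under $A^{J'}$ has type $(I\setminus J', \cdot)$ to reconcile invariance groups.

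For continuity, I would argue sequentially. The map $\phi$ restricted to $\psi^F(F^+)$ is just $\psi^F_z \mapsto \psi^X_z$, which is continuous (indeed, it is the restriction of the identity-type correspondence on flats). For a boundary point, take $\eta_m \to \eta$ in $\overline{\psi^F(F^+)}^{\widetilde{C}(F)}$; I want $\phi(\eta_m) \to \phi(\eta)$ in $\widetilde{C}(X)$, i.e. uniform convergence on $d_{sym}$-bounded subsets of $X$. Using that each $\eta_m$ (and $\eta$) is a limit of $\psi^F_{a_n\cdot p_0}$ for suitable sequences, and that $\phi(\psi^F_{a_n\cdot p_0}) = \psi^X_{a_n\cdot p_0}$, one realizes $\phi(\eta)$ as a limit in $\widetilde{C}(X)$ of functions $\psi^X_{a_n\cdot p_0}$; then a diagonal argument combined with the compactness of $\overline{\psi^X(X)}^{\widetilde{C}(X)}$ shows the limit of the $\phi(\eta_m)$ exists and must equal $\phi(\eta)$, because on the flat $F$ the values already determine it (the restriction of $\phi(\eta)$ to $F$ is essentially $\eta$, by the description via $a\cdot p_0 \mapsto \eta(a\cdot p_0)$). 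The cleanest route is: show $\phi$ is the unique continuous extension to $\overline{\psi^F(F^+)}^{\widetilde{C}(F)}$ of the map $\psi^F_z \mapsto \psi^X_z$ on $\psi^F(F^+)$, using that the source is compact (Theorem~\ref{thrm:flatcompactification}) and the target is Hausdorff, so it suffices to check that every convergent net $\psi^F_{z_n} \to \eta$ has $\psi^X_{z_n}$ convergent (to what will be named $\phi(\eta)$); convergence of $\psi^X_{z_n}$ follows from compactness of $\overline{\psi^X(X)}^{\widetilde{C}(X)}$ plus uniqueness of the limit, the latter coming from the well-definedness analysis above (any two subsequential limits agree on $F$ hence, by the invariance under $a^IK^IM(a^I)^{-1}N_I$ and the decomposition $X = a^IK^I(a^I)^{-1}N_IA\cdot p_0$, agree on all of $X$).

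For injectivity, suppose $\phi(\xi_1) = \phi(\xi_2)$. If both $\xi_i$ lie in $\psi^F(F^+)$ this is immediate since $z \mapsto \psi^X_z$ is injective on $X$. If $\xi_1 = \psi^F_z$ and $\xi_2 = \eta$ is a boundary horofunction, then $\phi(\xi_1) = \psi^X_z$ is an interior point of $\overline{X}^{hor}$ while $\phi(\xi_2)$ is a boundary point (it is the limit of $\psi^X_{a_n\cdot p_0}$ with $a_n\cdot p_0$ leaving every compact set, since the $\alpha(\log a_n)\to\infty$ for $\alpha\in\Delta\setminus I$), contradiction. If both are boundary horofunctions $\eta_1,\eta_2$ of types $(I_1,a^{I_1}),(I_2,a^{I_2})$, restrict $\phi(\eta_1)=\phi(\eta_2)$ to the flat $F = A\cdot p_0$: by the defining formula, $\phi(\eta_i)(a\cdot p_0) = \eta_i(a\cdot p_0)$ for all $a\in A$, so $\eta_1$ and $\eta_2$ agree as functions on $F$, hence $\eta_1 = \eta_2$ in $\widetilde{C}(F)$. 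The one thing to be careful about here is that the formula $\phi(\eta)(a^Ik^I(a^I)^{-1}u_Ia\cdot p_0) = \eta(a\cdot p_0)$ does restrict correctly to $F$: taking $k^I = e$, $u_I = e$ and absorbing $a^I$ we recover $\phi(\eta)(a'\cdot p_0)=\eta((a^I)^{-1}a'\cdot p_0)$ for $a'\in a^IA = A$, and one checks this is consistent with $\eta$'s own $W^I$-invariance so that it genuinely equals $\eta(a'\cdot p_0)$ --- this is where the translation normalization in the type definition is used. With well-definedness, continuity, and injectivity established, and the source compact and the target Hausdorff, $\phi$ is automatically a topological embedding, completing the proof.
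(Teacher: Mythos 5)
Your outline of the easy parts is fine (independence of the $A$-component via Lemmas~\ref{lem:relative Iwasawa} and~\ref{lem:limit invariance}; continuity via the extension criterion for a map from a dense subset of a compact metrizable space, which reduces to the sequential statement that $\psi^F_{a_n\cdot p_0}\to\eta$ forces $\psi^X_{a_n\cdot p_0}\to\phi(\eta)$; injectivity because restriction to $F^+$ is a left inverse, and then compactness of the source gives the embedding). But the heart of the theorem is the type-independence step, and there you have a genuine gap. What must be shown is that if $(a_n)$ and $(b_n)$ are sequences of types $(I,a^I)$ and $(J,b^J)$ both converging to $\eta$ in $\widetilde{C}(F)$, then their subsequential limits $\xi,\xi'$ in $\widetilde{C}(X)$ coincide. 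Your proposed mechanism --- pass to the type $(I\cap J, c^{I\cap J})$ via Lemma~\ref{lem:intersectiontype}, then ``rewrite $I\cap J$-coordinates in $I$-coordinates and use the extra invariance forced by the smaller type'' --- does not work as stated: the invariance groups are not nested (one has $K^{I\cap J}\subset K^{I}$ but $N_{I}\subset N_{I\cap J}$), so having a smaller type does not by itself reconcile the two prescriptions, and you never establish the hypothesis of Lemma~\ref{lem:eta smaller type}, namely that $\eta$ is invariant under a suitable $A^{J'}$. In the paper this invariance is exactly where the standing assumption that the metric satisfies the Convexity Lemma enters (Lemma~\ref{lem:convexity lemma}): one applies it to interpolated sequences $c_n=\exp((1-\lambda)\log a_n+\lambda\log b_n)$ (and, in the case $J\subsetneq I$, with $\lambda$ replaced by $\lambda t_n\to 0$ chosen from the growth rates of the $\alpha(\log b_n)$), obtaining a one-parameter family of conjugate invariances which Lemma~\ref{lem:invariance subgroup} converts into invariance under $W^I A^{J'}$ (Lemmas~\ref{lem:invariance when J is I} and~\ref{lem:invariance when J not I}). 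Your proposal never invokes the Convexity Lemma at all, yet it is the hypothesis that makes the theorem true.

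Moreover, even once the extra invariance and the smaller type are available, the comparison of $\xi$ and $\xi'$ is not a formal coordinate change: the paper runs an induction on $|I|+|J|$ (after reducing to $J\subset I$ by Lemma~\ref{lem:intersectiontype}), with separate treatment of $J=I$ and $J\subsetneq I$, and the terminal case where $J$ and $I\setminus J$ are orthogonal and $\eta$ is $A^{I\setminus J}$-invariant still requires a nontrivial direct computation (Lemma~\ref{lem:case J orthogonal in I}) using the decomposition $K^I=K^JK^{J'}$, the Iwasawa decomposition of $G^{J'}$, and Chabauty convergence of $b_nKb_n^{-1}$ to $K^JMN_J$ to move a unipotent factor past the argument of $\xi'$. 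None of this structure --- the interpolation/convexity argument, the induction, or the orthogonal terminal case --- appears in your plan, so the well-definedness (and with it the uniqueness-of-limit input that your continuity argument relies on) is not actually established.
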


\begin{proof} 
      The fact that $\phi$ is well-defined will be proved in Section~\ref{sec:well-definedness}, and the fact that $\phi$ is continuous will be proved in Section~\ref{sec:continuity}. Since the restiction to $F^+$ is a left inverse to $\phi$, we deduce that $\phi$ is injective. Since $\overline{\psi^F(F^+)}^{\widetilde{C}(F)}$ is compact, $\phi$ is then an embedding. 
\end{proof}

\subsubsection{Well-definedness} \label{sec:well-definedness}

We want to prove that the map $\phi$ in Theorem~\ref{thm:embedding flat in X} is well-defined.

\medskip

Consider first a horofunction $\eta \in \partial \overline{\psi^F(F^+)}^{\widetilde{C}(F)}$ which has some type $(I,a^I)$, and consider two decompositions $a^Ik^I{a^I}^{-1}u_Ia \cdot p_0 = a^Ik'^I{a^I}^{-1}u'_Ia' \cdot p_0$ of the same point in $X$. According to Lemma~\ref{lem:relative Iwasawa}, there exists $w \in W^I$ such that $(a^I)^{-1}a'=w (a^I)^{-1}a w^{-1}$. According to Lemma~\ref{lem:limit invariance}, $\eta$ is invariant under $a^IW^I(a^I)^{-1}$, so 
\[
\eta(a' \cdot p_0)=\eta(a^I w (a^I)^{-1}a w^{-1} \cdot p_0)=\eta((a^I w (a^I)^{-1})a \cdot p_0)=\eta(a \cdot p_0).
\]
 This means that the formula defining $\phi$ does not depend on the choice of the $A$ component in the decomposition $X=a^IK^I{a^I}^{-1}N_IA\cdot p_0$.

\medskip

Consider now a horofunction $\eta \in \partial  \overline{\psi^F(F^+)}^{\widetilde{C}(F)}$, which has two types $(I,a^I)$ and $(J,b^J)$. We will prove that the two formulas defining $\phi(\eta)$, for each type, agree. Let the notations be as in Lemma \ref{lem:intersectiontype}. Up to passing to a subsequence, we may assume that the sequences $(\psi^X_{a_n \cdot p_0})_{n \in \NN}$ and $(\psi^X_{b_n \cdot p_0})_{n \in \NN}$ converge to $\xi$ and $\xi'$ respectively. 
We precisely need to prove that $\xi=\xi'$, which will be done by induction on $|I|+|J|$. By Lemma \ref{lem:intersectiontype} assume from now on that $J \subset I$.

\medskip

Assume first that $|I|+|J|=0$, so $I=J=\emptyset$. According to Lemma~\ref{lem:limit invariance}, $\xi$ and $\xi'$ are both $N$-invariant, so for every $p = ua \cdot p_0 \in X=NA \cdot p_0$, we have $\xi(p)=\eta(a \cdot p_0)=\xi'(p)$. So $\xi=\xi'$.

\medskip

By induction, fix $m \in \NN$ and assume that if $|I| + |J| \leq m$, then $\xi=\xi'$. Consider now $I,J$ such that $|I|+|J|=m+1$. We will distinguish the two cases $J=I$ and $J \subsetneq I$. \\

\paragraph*{\underline{The case $J=I$}}

Assume that $J=I$. We will first show that $\eta$ has extra invariance and then define a subset $J' \subset I$ to show that $\eta$ has also a type smaller than $I$. The result will then follow by two inductions. 
      
\begin{lem} \label{lem:invariance when J is I} 
      Assume that $J=I$. Then there exists $J' \subset I$ such that :
      \begin{itemize}
	  \item[i)] $a^I \in b^IA^{J'}$,
	  \item[ii)] the roots in $J'$ and $I \bs J'$ are orthogonal, and
	  \item[iii)] $\eta$ is $W^IA^{J'}$-invariant.
      \end{itemize}
\end{lem}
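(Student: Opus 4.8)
The plan is to exploit the two types $(I, a^I)$ and $(J,b^J) = (I, b^I)$ of $\eta$ together with the Convexity Lemma (Lemma~\ref{lem:convexity lemma}) to produce a whole family of sequences of type $I$ converging to $\eta$, and then read off extra invariance and a smaller type from this family. First I would pick sequences $(a_n)_{n\in\NN}$ and $(b_n)_{n\in\NN}$ in $A$ of types $(I,a^I)$ and $(I,b^I)$ respectively, both with $(\psi^F_{a_n\cdot p_0})_{n\in\NN}$ and $(\psi^F_{b_n\cdot p_0})_{n\in\NN}$ converging to $\eta$. For each $t \in [0,1]$ the convex-combination sequence $c_n(t) = \exp\bigl((1-t)\log a_n + t\log b_n\bigr)$ lies on the segment between $a_n\cdot p_0$ and $b_n\cdot p_0$ inside the flat $F$ (after identifying $F$ with $\aa$ via $\exp$), so by the Convexity Lemma $(\psi^F_{c_n(t)\cdot p_0})_{n\in\NN}$ converges to $\eta$ as well. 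Moreover $(c_n(t))_{n\in\NN}$ still has type $(I, c^I(t))$ where $c^I(t)$ is the appropriate representative in $A^I$ of $\exp\bigl((1-t)\log a^I + t\log b^I\bigr)$, because on each root $\alpha \in I$ the limit is the convex combination of the two finite limits, while on $\alpha \in \Delta\bs I$ both terms go to $+\infty$.

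Next I would use Lemma~\ref{lem:limit invariance}: for each $t$, lifting $(c_n(t))$ to $X$ and passing to a convergent subsequence yields a limit $\xi_t \in \overline{\psi^X(X)}^{\widetilde C(X)}$ which is $c^I(t)K^IM\,c^I(t)^{-1}N_I$-invariant, and in particular its restriction to $F^+$ is $\eta$. Because the various $c^I(t)$ all differ by elements of $A^I$ conjugate to $a^I$ and $b^I$, combining the invariance for $t$ and $t'$ shows $\eta$ (hence $\xi_t$) is invariant under the group generated by $c^I(t)W^Ic^I(t)^{-1}$ as $t$ ranges over $[0,1]$; equivalently, setting $C = \{(c^I(t)) : t \in [0,1]\} \subset A^I$, $\eta$ is invariant under the smallest closed subgroup of $W^IA$ containing $\{cW^Ic^{-1}: c\in C\}$. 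Here $C$ is non-discrete (it is a nondegenerate segment as soon as $a^I \neq b^I$; the degenerate case $a^I = b^I$ is trivial, taking $J' = \emptyset$). Now Lemma~\ref{lem:invariance subgroup} applies: with $J' \subset I$ the smallest subset such that $C \subset cA^{J'}$ for all $c \in C$ and such that $J'$ and $I\bs J'$ are orthogonal, the said closed subgroup equals $W^IA^{J'}$. This gives (ii) directly, gives (iii) since $\eta$ is invariant under $W^IA^{J'}$, and gives (i) because $a^I$ and $b^I$ both lie in $C$, so $a^I \in b^I A^{J'}$.

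The main obstacle I anticipate is bookkeeping around the $A^I$-representative $c^I(t)$: the type of a sequence in $\overline{A^+}$ is only well-defined up to the $A_I$-ambiguity (and $W^I$-conjugation), and one must check that the segment $\{\log c^I(t)\}$ in $\aa^I$ genuinely sees the span direction $\log b^I - \log a^I$ that makes $C$ non-discrete, rather than this direction being absorbed into $A_{I\cap J} = A_I$. Since $J = I$ here, $\aa_{I\cap J} = \aa_I$, and the relevant direction lies in $\aa^I$ precisely because both $a^I, b^I \in A^I$; I would spell this out to justify that $C$ is a genuine nondegenerate segment in $A^I$ when $a^I\neq b^I$, which is exactly the hypothesis needed to invoke Lemma~\ref{lem:invariance subgroup}. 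A secondary point is ensuring the invariance of $\eta$ (a function on $F^+$) under $A^{J'}$ follows from the invariance of the $X$-limits $\xi_t$ under the larger groups: this is immediate since $A^{J'} \subset A$ acts on $F$ and the restriction of each $\xi_t$ to $F^+$ is $\eta$, so $A^{J'}$-invariance of $\xi_t$ restricts to $A^{J'}$-invariance of $\eta$; combined with $W^I$-invariance from Lemma~\ref{lem:limit invariance} (already known from either type), one gets the full $W^IA^{J'}$-invariance.
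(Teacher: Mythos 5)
Your proposal is correct and follows essentially the same route as the paper's proof: convex-combination sequences $c_n(\lambda)=\exp((1-\lambda)\log a_n+\lambda\log b_n)$ for $\lambda\in[0,1]$, the Convexity Lemma to keep convergence to $\eta$, identification of their type, Lemma~\ref{lem:limit invariance} to get the family of conjugate invariances, and Lemma~\ref{lem:invariance subgroup} applied to the non-discrete segment of types to obtain $W^IA^{J'}$-invariance together with conditions i) and ii). The only cosmetic difference is that the paper first normalizes $a^I=1$ so the segment becomes $\{(b^I)^\lambda\}$, whereas you keep general endpoints and also note the trivial case $a^I=b^I$ explicitly.
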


\begin{proof}
      For simplicity, up to translating by $(a^{I})^{-1}$, we may assume that $a^I=e$.
  
      Fix $\lambda \in [0,1]$. For each $n \in \NN$, let $c_n=\exp((1-\lambda)\log a_n + \lambda \log b_n) \in A$. According to the Convexity Lemma \ref{lem:convexity lemma}, the sequence $(\psi^F_{c_n \cdot p_0})_{n \in \NN}$ converges to $\eta$.
      The sequence $(c_n)_{n \in \NN}$ is of type $(I, (b^I)^\lambda)$, where $(b^I)^\lambda$ denotes $\exp(\lambda \log b^I)$. Since the sequence $(\psi^F_{c_n \cdot p_0})_{n \in \NN}$ converges to $\eta$, by Lemma~\ref{lem:limit invariance} we deduce that $\eta$ is $(b^I)^\lambda W^{I}((b^I)^\lambda)^{-1}$-invariant, for every $\lambda \in [0,1]$.

      According to Lemma~\ref{lem:invariance subgroup}, we deduce that $\eta$ is invariant under $W^IA^{J'}$, where $J' \subset I$ is the smallest subset such that $b^I \in A^{J'}$ and such that the roots in $J'$ and in $I \bs J'$ are orthogonal.
\end{proof}

According to Lemma \ref{lem:eta smaller type}, we deduce that $\eta$ has also type $(I \bs J',c^{I \bs J'})$, for some $c^{I \bs J'} \in A^{I \bs J'}$. Let $(c_n)_{n \in \NN}$ denote a sequence of type $(I \bs J',c^{I \bs J'})$ such that the sequence $(\psi^F_{c_n \cdot p_0})_{n \in \NN}$ converges to $\eta$. Up to passing to a subsequence, assume that the sequence $(\psi^X_{c_n \cdot p_0})_{n \in \NN}$ converges to some $\xi''$.

Since $a^I \in b^IA^{J'}$ and $a^I \neq b^I$, we know that $J' \neq \emptyset$. Therefore we have $|I|+|I \bs J'| < |I|+|I|$ so $|I|+|I \bs J'| \leq m$. By induction applied to the sequences $(a_n)_{n \in \NN}$ and $(c_n)_{n \in \NN}$, we deduce that $\xi=\xi''$. By induction applied to the sequences $(b_n)_{n \in \NN}$ and $(c_n)_{n \in \NN}$, we deduce that $\xi'=\xi''$. In conclusion, we have $\xi=\xi'$. This concludes the induction, and finishes the proof that $\xi=\xi'$ in the case where $J = I$.\\

\paragraph*{\underline{The case $J \subsetneq I$}}

Assume that $J \subsetneq I$. Similarly to the case before, we will first show an extra invariance of $\eta$ and that it has a smaller type with respect to a new subset $J' \subset I$. To conclude the result by induction, we have to distinguish again two cases depending on whether $I \setminus J' = J$ or not. 

\begin{lem} \label{lem:invariance when J not I} There exists $J' \subset I$ such that :
      \begin{itemize}
	  \item[i)] $J \cup J' =I$,
	  \item[ii)] the roots in $J'$ and $I \bs J'$ are orthogonal, and
	  \item[iii)] $\eta$ is $W^IA^{J'}$-invariant.
      \end{itemize}
\end{lem}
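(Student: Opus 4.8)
The plan is to mirror the proof of Lemma~\ref{lem:invariance when J is I}. Fix a sequence $(a_n)_{n}$ in $\overline{A^+}$ of type $(I,a^I)$ and a sequence $(b_n)_{n}$ of type $(J,b^J)$ with $\psi^F_{a_n\cdot p_0}\to\eta$ and $\psi^F_{b_n\cdot p_0}\to\eta$, and, after translating by $(a^I)^{-1}$, assume $a^I=1$. For $\lambda\in[0,1]$ put $c_n^\lambda=\exp\!\big((1-\lambda)\log a_n+\lambda\log b_n\big)$. Since $c_n^\lambda\cdot p_0$ lies on the segment of the flat $F\cong\aa$ joining $a_n\cdot p_0$ and $b_n\cdot p_0$, the Convexity Lemma~\ref{lem:convexity lemma} gives $\psi^F_{c_n^\lambda\cdot p_0}\to\eta$ for every $\lambda$. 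Computing simple-root values: for $\alpha\in J$ one has $\alpha(\log c_n^\lambda)\to\lambda\,\alpha(\log b^J)$, while for $\alpha\in\Delta\setminus J$ one has $\alpha(\log c_n^\lambda)\to+\infty$ as soon as $\lambda>0$ (using $\alpha(\log b_n)\to+\infty$ and $\alpha(\log a_n)\geq0$ there). Hence $(c_n^\lambda)_n$ has type $(J,(b^J)^\lambda)$ for $\lambda\in(0,1]$, and type $(I,1)$ for $\lambda=0$.

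By Lemma~\ref{lem:limit invariance}, used exactly as in the case $J=I$ to read off the invariance of $\eta$ on the flat, $\eta$ is invariant under $W^I$ and under $(b^J)^\lambda\,W^J\,(b^J)^{-\lambda}$ for every $\lambda\in(0,1]$; let $\Gamma\subseteq W^IA$ be the smallest closed subgroup containing these, so $\eta$ is $\Gamma$-invariant. Applying Lemma~\ref{lem:invariance subgroup} to the non-discrete set $C=\{(b^J)^\lambda:\lambda\in[0,1]\}\subseteq A^J$, the conjugates $\{c\,W^J c^{-1}:c\in C\}$ generate $W^JA^{J_0}$, where $J_0\subseteq J$ is the smallest subset with $b^J\in A^{J_0}$ and $J_0$ orthogonal to $J\setminus J_0$; hence $\Gamma\supseteq\langle W^I,A^{J_0}\rangle$. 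Since $\aa^{J_0}\subseteq\aa^I$, Lemma~\ref{WI_invariance} identifies $\langle W^I,A^{J_0}\rangle=W^IA^{J_1}$, where $\aa^{J_1}$ is the smallest $W^I$-invariant subspace of $\aa^I$ containing $\aa^{J_0}$; concretely, $J_1$ is the union of the irreducible components of $I$ meeting $J_0$, so $J_1$ is orthogonal to $I\setminus J_1$. Thus $\eta$ is $W^IA^{J_1}$-invariant, with $J_1$ already satisfying properties (ii) and (iii).

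The remaining point, property (i) with $J\cup J'=I$, is the main obstacle, and here the hypothesis that $\eta$ carries \emph{both} types must be used in an essential way. Equivalently one has to show that $\eta$ is invariant under translations along $\aa^{I_i}$ for every irreducible component $I_i$ of $I$ not contained in $J$ --- i.e.\ that the divergence of the type-$(J,b^J)$ sequence along the roots of $I\setminus J$ yields genuine translation invariance of $\eta$ in those directions. This cannot come from the single interpolation above (when $b^J$ is close to $1$ the set $J_0$ is too small), so I expect it to require a diagonal refinement: letting $\lambda=\lambda_n\to0$ at a rate slow enough that the roots of $I\setminus J$ still diverge along $c_n^{\lambda_n}$ produces further type-$(J,\cdot)$ sequences converging to $\eta$, and combining these with the $N_J$-invariance of the associated horofunction on $X$ furnished by Proposition~\ref{limits of K} and Lemma~\ref{lem:limit invariance} should force the desired translations. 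Granting this, one takes $J'$ to be the union of $J_1$ with the irreducible components of $I$ meeting $I\setminus J$; then $J'\supseteq I\setminus J$ gives (i), $J'$ is still a union of irreducible components so (ii) holds, and $\eta$ is $W^IA^{J'}$-invariant, giving (iii). Everything except this last translation-invariance step is the same routine computation as in the case $J=I$.
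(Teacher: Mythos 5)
Your first two paragraphs are sound and match the paper's general strategy: interpolating between the two sequences, invoking the Convexity Lemma~\ref{lem:convexity lemma} to keep convergence to $\eta$, reading off invariances via Lemma~\ref{lem:limit invariance}, and converting conjugate-invariances into $W^IA^{J'}$-invariance through Lemma~\ref{lem:invariance subgroup} and Lemma~\ref{WI_invariance}. But you yourself flag the crux --- translation invariance of $\eta$ along $A^{I\setminus J}$, which is what makes $J\cup J'=I$ possible --- and your proposal for it is a genuine gap. The appeal to the $N_J$-invariance of the horofunction $\xi'$ on $X$ (via Proposition~\ref{limits of K}) does not deliver invariance of $\eta$ on the flat: that kind of argument is used in the paper only later (Lemma~\ref{lem:case J orthogonal in I}) to compare $\xi$ and $\xi'$ once the invariance of $\eta$ is already in hand, and no mechanism is given for turning unipotent invariance of a horofunction on $X$ into $A$-translation invariance of a horofunction on $F$. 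Moreover your description of the ``diagonal refinement'' as producing ``further type-$(J,\cdot)$ sequences'' is off the mark: the whole point is that suitably renormalized interpolations have a \emph{strictly larger} type, whose extra $A$-component is a regular element that moves with a parameter.

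Concretely, the paper's proof fills this step as follows. Partition $I\setminus J=I_1\sqcup\dots\sqcup I_p$ according to the comparability classes of the growth rates $\alpha(\log b_n)$, $\alpha\in I\setminus J$ (after passing to a subsequence). For a fixed class $I_i$, choose $\alpha\in I_i$, set $t_n=1/\alpha(\log b_n)\to 0$, and for $\lambda>0$ consider $c_n=\exp\big((1-\lambda t_n)\log a_n+\lambda t_n\log b_n\big)$; the Convexity Lemma still applies since the interpolation parameter is allowed to depend on $n$. One checks that $\big(\pi^{I_i}(b_n)\big)^{t_n}$ converges to a \emph{regular} element $c^{I_i}\in (A^{I_i})^{+}$, and that $(c_n)$ has type $\big(J_i,(c^{I_i})^{\lambda}\big)$ with $J_i=J\sqcup I_1\sqcup\dots\sqcup I_i$. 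Lemma~\ref{lem:limit invariance} then gives that $\eta$ is $(c^{I_i})^{\lambda}W^{I_i}\big((c^{I_i})^{\lambda}\big)^{-1}$-invariant for all $\lambda>0$, and since $c^{I_i}$ is regular in $A^{I_i}$, Lemma~\ref{lem:invariance subgroup} applied to $C=\{(c^{I_i})^{\lambda}\}$ yields invariance of $\eta$ under all of $A^{I_i}$. Running over $i=1,\dots,p$ gives $A^{I\setminus J}$-invariance, and combining with the $W^I$-invariance coming from $(a_n)$ produces $J'$ as the smallest subset of $I$ containing $I\setminus J$ that is orthogonal to its complement, which is exactly what properties i)--iii) require. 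Without this renormalized interpolation and the regularity of the limits $c^{I_i}$, your argument does not reach $J'\supseteq I\setminus J$, so the proposal as written does not prove the lemma.
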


\begin{proof}
Let $a_n, b_n$ be the sequences of type $I$ and $J$ converging to $\eta$. 
      For simplicity, up to translating by $(a^{I})^{-1}$, we may assume that $a^I=e$.
	Up to passing to a subsequence, let us partition $I \bs J$ into $I \bs J= I_1 \sqcup I_2 \sqcup \dots \sqcup I_p$ such that:
      \begin{itemize}
	  \item $\forall 1 \leq i \leq p, \forall \alpha,\beta \in I_i, \lim_{n \rightarrow +\infty} \frac{\alpha(\log b_n)}{\beta(\log b_n)} \in (0,+\infty)$,
	  \item $\forall 1 \leq i < j \leq p, \forall \alpha \in I_i, \forall \beta \in I_j, \lim_{n \rightarrow +\infty} \frac{\alpha(\log b_n)}{\beta(\log b_n)} =0$.
      \end{itemize}

      \medskip

      Fix $1 \leq i \leq p$, and for some $\alpha \in I_i$ define $t_n \coloneqq \frac{1}{\alpha(\log b_n)}$ such that $t_n \longrightarrow 0$ as $n \rightarrow +\infty$. Fix $\lambda >0$. For each $n \in \NN$, let $c_n=\exp((1-\lambda t_n)\log a_n + \lambda t_n \log b_n) \in A$. According to Lemma \ref{lem:convexity lemma}, the sequence $(\psi^F_{c_n \cdot p_0})_{n \in \NN}$ converges to $\eta$. Let us define 
      \[
	  c^{I_i} \coloneqq \lim_{n \ra +\infty} \left( \pi^{I_i}(b_n) \right)^{t_n} \in A^{I_i},
      \]
      where $\pi^{I_i}(b_n)$ denotes the orthogonal projection of $b_n$ onto $A^{I_i}$. Note that this sequence converges: for any $\beta \in I_i$, we have
\[\beta \left(\log \left( \pi^{I_i}(b_n) \right)^{t_n}\right) = t_n \beta(\log b_n) = \frac{\beta(\log b_n)}{\alpha(\log b_n)},\]
so $\lim_{n \rightarrow +\infty} \beta \left(\log \left( \pi^{I_i}(b_n) \right)^{t_n}\right) \in (0,+\infty)$. On the other hand, for any $\beta \in \Delta \bs I_i$, we have \mbox{$\beta \left(\log \left( \pi^{I_i}(b_n) \right)^{t_n}\right)=0$}, so the limit $c^{I_i} \in A^{I_i}$ exists.

Furthermore, we have $c^{I_i} \in (A^{I_i})^{+}$. Let 
      \[
	  J_i \coloneqq J \sqcup I_1 \sqcup \dots \sqcup I_i.
      \]
      For every $\alpha \in \Delta \bs J_i$, we have 
      \[
	  \alpha(\log c_n)=(1-\lambda t_n)\alpha(\log a_n) + \lambda t_n \alpha(\log b_n) \longrightarrow +\infty.
      \]
      For every $\alpha \in J \cup I_1 \cup \dots \cup I_{i-1}$, we have 
      \[
         \alpha(\log c_n) =(1-\lambda t_n)\alpha(\log a_n) + \lambda t_n \alpha(\log b_n) \longrightarrow \alpha(\log a^I) = 0.\]
      For every $\alpha \in I_i$, we have
      \begin{align*}
	  \alpha(\log c_n) &=(1-\lambda t_n)\alpha(\log a_n) + \lambda t_n \alpha(\log b_n) \\
	  &\longrightarrow \alpha(\log a^I) + \lambda \alpha(\log c^{I_i}) = \lambda \alpha(\log c^{I_i}).
      \end{align*}
      As a consequence, the sequence $(c_n)_{n \in \NN}$ is of type $(J_i, (c^{I_i})^\lambda)$, where $(c^{I_i})^\lambda$ denotes $\exp(\lambda \log c^{I_i})$. Since the sequence $(\psi^F_{c_n \cdot p_0})_{n \in \NN}$ converges to $\eta$, by Lemma~\ref{lem:limit invariance} we deduce that $\eta$ is $(c^{I_i})^\lambda W^{I_i}((c^{I_i})^\lambda)^{-1}$-invariant.

      As $c^{I_i} \in (A^{I_i})^{+}$, we deduce by Lemma \ref{lem:invariance subgroup} that $\eta$ is invariant under $A^{I_i}$. Because this is true for every $1 \leq i \leq p$, we conclude that $\eta$ is invariant under $A^{I \bs J}$.

      Since the sequence $(a_n)_{n \in \NN}$ is of type $(I,e)$, and the sequence $(\psi^F_{a_n \cdot p_0})_{n \in \NN}$ converges to $\eta$, we know by Lemma \ref{lem:limit invariance}  that $\eta$ is $W^I$-invariant. In conclusion, $\eta$ is invariant under $W^I$ and $A^{I \bs J}$. The smallest closed subgroup of $W^IA^I$ containing both $W^I$ and $A^{I \bs J}$ is $W^IA^{J'}$, where $J' \subset I$ is the smallest subset containing $I \bs J$ such that the roots in $J'$ and in $I \bs J'$ are orthogonal. Therefore $\eta$ is invariant under $W^IA^{J'}$.
\end{proof}

Since $\eta$ is invariant under $A^{J'}$,  we deduce according to Lemma \ref{lem:eta smaller type} that $\eta$ has type $(I \bs J',c^{I \bs J'})$, for some $c^{I \bs J'} \in A^{I \bs J'}$.

\medskip

If $I \bs J' \subsetneq J$, then $|I|+|I \bs J'|<|I|+|J|$ and $|J|+|I \bs J'|<|I|+|J|$, so by applying the induction twice, we know that $\xi=\xi'$.

\medskip

So we are left with the case $I \bs J' = J$. In this case $J$ and $I \bs J$ are orthogonal.

\begin{lem} \label{lem:case J orthogonal in I} 
      If $\eta$ is $A^{I \bs J}$-invariant and $J$ and $I \bs J$ are orthogonal, then $\xi=\xi'$. 
\end{lem}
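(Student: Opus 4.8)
The plan is to avoid comparing $\xi$ and $\xi'$ head on, and instead route through a third horofunction, coming from a sequence of type $J$ with \emph{trivial} $A^J$-component, which the induction hypothesis (the statement $\xi=\xi'$ for smaller $|I|+|J|$) already controls. First I would translate by $(a^I)^{-1}$ so that, without loss of generality, $a^I=1$; this leaves all hypotheses intact ($(a^I)^{-1}\in A^I$ commutes with $A^{I\setminus J}$, so $\eta$ stays $A^{I\setminus J}$-invariant), replaces $b^J$ by a possibly different element of $A^J$ still written $b^J$, and replaces $\xi,\xi'$ by translates with $\xi=\xi'$ iff the translates agree. I also record that $\xi|_F=\xi'|_F=\eta$: since $F$ is a flat, the restriction of $d$ to $F$ is the flat Finsler distance, so $\psi^X_z|_F=\psi^F_z$ for $z\in F$, and uniform convergence on bounded sets restricts from $X$ to $F$.

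Next I would apply Lemma~\ref{lem:eta smaller type} with $J'=I\setminus J$ (using that $\eta$ is $A^{I\setminus J}$-invariant): $\eta$ has type $(I\setminus(I\setminus J),c)=(J,c)$ for some $c\in A^J$, and tracing the construction there (the auxiliary sequences $c^k a_{n_k}$ with $c\in(A^{I\setminus J})^+$, for which $\alpha(\log c)=0$ on $J$ since $J\perp I\setminus J$) forces $c=1$; so $\eta$ has type $(J,1)$. Pick a sequence $(c_n)$ in $A$ of type $(J,1)$ with $\psi^F_{c_n\cdot p_0}\to\eta$, and let $\xi''$ be a subsequential limit of $(\psi^X_{c_n\cdot p_0})$. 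The pair of types $\bigl((J,1),(J,b^J)\bigr)$ has total size $2|J|\le|I|+|J|-1$ because $J\subsetneq I$, so the induction hypothesis applies to $\eta$ with these two types and the sequences $(c_n),(b_n)$, yielding $\xi''=\xi'$. By Lemma~\ref{lem:limit invariance}, $\xi''$ is $K^J M N_J$-invariant and $\xi$ is $K^I M N_I$-invariant.

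The heart of the argument is to show that $\xi$ is moreover $G^{I\setminus J}$-invariant. Here I use that $J\perp I\setminus J$ gives the commuting product structure $G^I=G^JG^{I\setminus J}$, $K^I=K^JK^{I\setminus J}$, $A^I=A^JA^{I\setminus J}$ (orthogonal), with $G^{I\setminus J}$ centralizing $A_I$, $A^J$ and $K^J$ and normalizing $N_I$. Writing a point of $X=K^IN_IA\cdot p_0$ as $p=k^Jk^{I\setminus J}u_I\,a_I a^J a^{I\setminus J}\cdot p_0$, one gets $\xi(p)=\eta(a\cdot p_0)=\eta(a_Ia^J\cdot p_0)$ from the $K^I M N_I$-invariance of $\xi$, the identity $\xi|_F=\eta$, and the $A^{I\setminus J}$-invariance of $\eta$. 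For $g\in G^{I\setminus J}$: push $g$ past $k^J$, absorb it into $k^{I\setminus J}$, carry the resulting element of $G^{I\setminus J}$ past $u_I$ (it normalizes $N_I$) and past $a_Ia^J$ (it commutes with them), and apply the Cartan decomposition $G^{I\setminus J}=K^{I\setminus J}\,\overline{(A^{I\setminus J})^+}\,K^{I\setminus J}$ to $(gk^{I\setminus J})a^{I\setminus J}$; this rewrites $g\cdot p$ as $\tilde k^I\tilde u_I(a_Ia^J\beta)\cdot p_0$ with $\tilde k^I\in K^I$, $\tilde u_I\in N_I$, $\beta\in A^{I\setminus J}$. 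Only the $A^{I\setminus J}$-component changed, so $\xi(g\cdot p)=\eta(a_Ia^J\beta\cdot p_0)=\eta(a_Ia^J\cdot p_0)=\xi(p)$. Since $N_J$ is generated by $N_I$ and the Iwasawa-unipotent $N^+_{I\setminus J}\subseteq G^{I\setminus J}$ of $G^{I\setminus J}$, this makes $\xi$ invariant under $N_J$, and since $K^J\subseteq K^I$, invariant under $K^J M N_J$.

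Finally, both $\xi$ and $\xi''=\xi'$ are $K^J M N_J$-invariant and restrict to $\eta$ on $F$. Using the decomposition $X=K^JN_JA\cdot p_0$ of Lemma~\ref{lem:relative Iwasawa}, for any $p=k^Ju_J a\cdot p_0$ one has $\xi(p)=\xi(a\cdot p_0)=\eta(a\cdot p_0)=\xi''(p)$, whence $\xi=\xi''=\xi'$, as required. The main obstacle is the bookkeeping in the third step: tracking an element of $G^{I\setminus J}$ through all the factors of the generalized horocyclic decomposition and verifying that it affects only the $A^{I\setminus J}$-component. This is exactly where the orthogonality hypothesis is indispensable, and where all the commutation relations between $G^{I\setminus J}$ and $A_I$, $A^J$, $K^J$, $N_I$ are used.
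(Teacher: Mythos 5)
Your proof is correct, but it takes a genuinely different route from the paper's. The paper normalizes $b^J=1$ and $a^I=a^{J'}\in A^{J'}$ and then evaluates $\xi$ and $\xi'$ directly at an arbitrary point $p$: it strips off the $K^JMN_J$-part of $p$ using the invariance of $\xi'$ from Lemma~\ref{lem:limit invariance}, pushes the remaining factor $a^{J'}k^{J'}(a^{J'})^{-1}c$ into the flat via the Iwasawa decomposition of $G^{J'}$ (justifying the extra unipotent factor by a Chabauty argument along $(b_n)$), and then uses the $A^{J'}$-invariance of $\eta$ to see that both $\xi(p)$ and $\xi'(p)$ equal $\eta(c_{J'}\cdot p_0)$. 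You instead (i) normalize only $a^I=1$, (ii) use Lemma~\ref{lem:eta smaller type} to extract a type $(J,1)$ for $\eta$ and then invoke the ambient induction hypothesis for the pair of types $\bigl((J,1),(J,b^J)\bigr)$, of total size $2|J|\le |I|+|J|-1$, to replace $\xi'$ by the $K^JMN_J$-invariant limit $\xi''$, and (iii) upgrade the invariance of $\xi$ from $K^IMN_I$ to all of $G^{I\setminus J}$ (hence to $K^JMN_J$, since $\mathfrak{n}_J=\mathfrak{n}_I\oplus\mathfrak{n}^{I\setminus J}$ under orthogonality) via the Cartan decomposition of $G^{I\setminus J}$ and the $A^{I\setminus J}$-invariance of $\eta$; the conclusion then follows because $\xi$ and $\xi''$ are invariant under the same group and agree with $\eta$ on the flat, so the decomposition $X=K^JN_JA\cdot p_0$ of Lemma~\ref{lem:relative Iwasawa} forces $\xi=\xi''=\xi'$. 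Both arguments hinge on the same mechanism (the commuting decompositions $K^I=K^JK^{I\setminus J}$, $A^I=A^JA^{I\setminus J}$ coming from orthogonality, a decomposition inside $G^{I\setminus J}$ -- Iwasawa for the paper, Cartan for you -- and the $A^{I\setminus J}$-invariance of $\eta$ as the decisive input), but they buy different things: the paper's proof is self-contained and never appeals to the induction hypothesis inside this lemma, whereas yours does (legitimately and without circularity, since the total size strictly drops and the equal-index-set case is handled elsewhere in the induction); in exchange, your detour through $\xi''$ avoids the simultaneous normalization $b^J=1$, $a^I\in A^{J'}$, i.e.\ any implicit matching of the $A^J$-components of the two types, and it isolates the somewhat stronger and cleaner intermediate statement that $\xi$ is $G^{I\setminus J}$-invariant, of which the paper only uses a pointwise avatar. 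Only a cosmetic slip: what your trace of Lemma~\ref{lem:eta smaller type} pins down is the $A^J$-part $c^J=1$ of the new type, not the auxiliary element $c\in (A^{I\setminus J})^+$ itself.
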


\begin{proof}
      As $J$ and $J'=I \bs J$ are orthogonal, we have the orthogonal decomposition $A^I=A^JA^{J'}$. Let us decompose $a^I=a^J a^{J'} \in A^JA^{J'}$. Up to translating by $(b^Ja^{J'})^{-1}$, we can assume that $b^J=e$ and $a^I = a^{J'} \in A^{J'}$.

      \medskip

      As $\Sigma^J$ and $\Sigma^{J'}$ are orthogonal, we have the decomposition $K^I=K^JK^{J'}$, with $K^J$ and $K^{J'}$ commuting. Furthermore $K^J$ and $A_J$ are commuting. Since $A^{J'} \subset A_J$, we deduce that $a^{J'}$ commutes with $K^J$. In particular, 
      \[
	  a^{J'}K^I(a^{J'})^{-1}=K^Ja^{J'}K^{J'}(a^{J'})^{-1}.
      \]
      Fix any point $p \in X$, we will show that $\xi'(p)=\xi(p)$. Consider the decomposition $X = a^{J'}K^I(a^{J'})^{-1}N_IA \cdot p_0 = K^JN_Ia^{J'}K^{J'}(a^{J'})^{-1}A \cdot p_0$, and write 
      \[
	  p=k^Ju_Ia^{J'}k^{J'}(a^{J'})^{-1}c \cdot p_0 \in X,
      \]
      where $k^J \in K^J$, $u_I \in N_I$, $k^{J'} \in K^{J'}$ and $c \in A$. According to Lemma \ref{lem:limit invariance}, $\xi'$ is invariant under $K^JMN_J$. Since $N_I \subset N_J$,  we deduce that 
      \[
	  \xi'(p)=\xi'(a^{J'}k^{J'}(a^{J'})^{-1}c \cdot p_0). 
      \]
      In the decomposition $A=A_{J'}A^{J'}$, let us write $c=c_{J'}c^{J'}$. So 
      \[
	  a^{J'}k^{J'}(a^{J'})^{-1}c = c_{J'}a^{J'}k^{J'}(a^{J'})^{-1}c^{J'} \in c_{J'}G^{J'}.
      \]
      According to the Iwasawa decomposition $G^{J'}=N^{J'}A^{J'}K^{J'}$, there exists $u^{J'} \in N^{J'}$ and $d^{J'} \in A^{J'}$ such that $u^{J'}a^{J'}k^{J'}(a^{J'})^{-1}c^{J'} \in d^{J'}K^{J'}$. As a consequence, 
      \[
	  u^{J'}\cdot (a^{J'}k^{J'}(a^{J'})^{-1}c \cdot p_0)=c_{J'}d^{J'} \cdot p_0.
      \]
      We claim that 
      \[
	  \xi'(p)=\xi'(c_{J'}d^{J'} \cdot p_0).
      \]
      Showing this is equivalent to showing that $\xi'(a^{J'}k^{J'}(a^{J'})^{-1}c \cdot p_0) =$ \linebreak $\xi'(u^{J'}a^{J'}k^{J'}(a^{J'})^{-1}c \cdot p_0)$. Since the sequence $(b_n K b_n^{-1})_{n \in \NN}$ converges to $K^JMN_J$ in the Chabauty topology (see~Proposition~\ref{limits of K}), and as $u^{J'} \in N^{J'} \subset N_J$, there exists a sequence $(k_n)_{n \in \NN}$ such that the sequence $(b_nk_n{b_n}^{-1})_{n \in \NN}$ converge to $u^{J'}$. As a consequence,
      
      \begin{eqnarray*}
	  \xi'(u^{J'}a^{J'}k^{J'}(a^{J'})^{-1}c \cdot p_0)-\xi'(a^{J'}k^{J'}(a^{J'})^{-1}c \cdot p_0)&=& \\
	  \lim_{n \ra +\infty} d(b_n \cdot p_0,u^{J'}a^{J'}k^{J'}(a^{J'})^{-1}c \cdot p_0) - d(b_n \cdot p_0,a^{J'}k^{J'}(a^{J'})^{-1}c \cdot p_0)&=& \\
	  \lim_{n \ra +\infty} d(b_n \cdot p_0,b_nk_nb_n^{-1}a^{J'}k^{J'}(a^{J'})^{-1}c \cdot p_0) - d(b_n \cdot p_0,a^{J'}k^{J'}(a^{J'})^{-1}c \cdot p_0)&=&0.
      \end{eqnarray*}
      Hence $\xi'(u^{J'}a^{J'}k^{J'}(a^{J'})^{-1}c \cdot p_0) = \xi'(a^{J'}k^{J'}(a^{J'})^{-1}c \cdot p_0)$, so 
      \[
	  \xi'(p)=\xi'(c_{J'}d^{J'} \cdot p_0).
      \]
      By assumption, $\eta$ is invariant under $A^{I \bs J}=A^{J'}$. As a consequence, we have 
      \[
	  \xi'(p)=\xi'(c_{J'}d^{J'} \cdot p_0) = \eta(c_{J'}d^{J'} \cdot p_0)=\eta(c_{J'} \cdot p_0).
      \]
      On the other hand, according to Lemma \ref{lem:limit invariance}, we have 
      \[
	  \xi(p)=\xi(k^Ju_Ia^{J'}k^{J'}(a^{J'})^{-1}c \cdot p_0)=\xi(c \cdot p_0)=\eta(c \cdot p_0).
      \]
      Since $c=c_{J'}c^{J'}$ and $\eta$ is invariant under $A^{J'}$, we conclude that \linebreak $\xi(p)=\eta(c_{J'} \cdot p_0)$. Therefore, $\xi'(p)=\xi(p)$. So $\xi=\xi'$.
\end{proof}

This conludes the proof by induction that $\xi=\xi'$. So we have proved that the map $\phi$ in Theorem \ref{thm:embedding flat in X} is well-defined.

\subsubsection{Continuity} \label{sec:continuity}

We want to prove that the map $\phi$ in Theorem \ref{thm:embedding flat in X} is continuous. It is clear that $\phi$ is continuous on the interior $\psi^F(F^+)$. Fix $\eta \in \partial\overline{\psi^F(F^+)}^{\widetilde{C}(F)}$, we will show that $\phi$ is continuous at $\eta$.

\begin{lem}
      Let $(a_n)_{n \in \NN}$ be an almost geodesic sequence in $\overline{A^+}$ such that the sequence $(\psi^F_{a_n \cdot p_0})_{n \in \NN}$ converges to $\eta$. Then $(\psi^X_{a_n \cdot p_0})_{n \in \NN}$  converges to $\phi(\eta)$.
\end{lem}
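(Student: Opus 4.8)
The plan is to prove that if $(a_n)_{n \in \NN} \subset \overline{A^+}$ satisfies $\psi^F_{a_n \cdot p_0} \to \eta$ in $\widetilde{C}(F)$, then $\psi^X_{a_n \cdot p_0} \to \phi(\eta)$ in $\widetilde{C}(X)$. Since $\overline{\psi^X(X)}^{\widetilde{C}(X)}$ is compact, it suffices to show that \emph{every} convergent subsequence of $(\psi^X_{a_n \cdot p_0})_{n}$ has limit $\phi(\eta)$; so I pass to a subsequence along which $\psi^X_{a_n \cdot p_0} \to \xi$ for some $\xi \in \overline{\psi^X(X)}^{\widetilde{C}(X)}$, and aim to show $\xi = \phi(\eta)$.

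First I would arrange, by passing to a further subsequence, that $(a_n)_n$ has a type: by compactness of the relevant parameter set, there is a proper subset $I \subset \Delta$ and $a^I \in A^I$ such that (after relabeling) $(a_n)_n$ is of type $(I, a^I)$ in the sense of the definition preceding Proposition~\ref{limits of K} — for $\alpha \in I$ the values $\alpha(\log a_n)$ converge to $\alpha(\log a^I)$, and for $\alpha \in \Delta \bs I$ they tend to $+\infty$. Then $\eta$ has type $(I, a^I)$ by definition, so $\phi(\eta)$ is computed by the formula $\phi(\eta)\big(a^I k^I (a^I)^{-1} u_I a \cdot p_0\big) = \eta(a \cdot p_0)$, and well-definedness of $\phi$ (already established) guarantees this value does not depend on which type of $\eta$ we use.

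Next I would identify $\xi$. By Lemma~\ref{lem:limit invariance}, $\xi$ is invariant under $a^I K^I M (a^I)^{-1} N_I$. Using the generalized horocyclic decomposition of Lemma~\ref{lem:relative Iwasawa}, write an arbitrary point $p \in X$ as $p = a^I k^I (a^I)^{-1} u_I a \cdot p_0$ with $a \in A$. The invariance gives $\xi(p) = \xi(a \cdot p_0)$, reducing the problem to comparing $\xi$ and $\phi(\eta)$ on the flat $A \cdot p_0$. But for $a \in A$ we have $\xi(a \cdot p_0) - \xi(p_0) = \lim_n \big(d(a_n \cdot p_0, a \cdot p_0) - d(a_n \cdot p_0, p_0)\big) = \lim_n \psi^X_{a_n \cdot p_0}(a \cdot p_0)$, and since $a, p_0 \in F$ and $d$ restricted to $F$ is the intrinsic Finsler distance, this equals $\lim_n \psi^F_{a_n \cdot p_0}(a \cdot p_0) = \eta(a \cdot p_0)$. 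Normalizing $\xi(p_0) = 0 = \eta(p_0)$ (both are classes in $\widetilde{C}$ vanishing at $p_0$), we get $\xi(a \cdot p_0) = \eta(a \cdot p_0)$ for all $a \in A$, hence $\xi(p) = \eta(a \cdot p_0) = \phi(\eta)(p)$ for all $p \in X$. Therefore $\xi = \phi(\eta)$.

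The main obstacle is making the reduction to the flat fully rigorous: one must check that the restriction of the $G$-invariant Finsler distance $d$ on $X$ to the flat $F = A \cdot p_0$ agrees with the intrinsic Finsler distance on $F$ used to define $\psi^F$ (this is where the structure theory — $F$ being a totally geodesic flat and $d$ being realized by the $W$-invariant norm on $\aa$ via Theorem~\ref{planche} — enters), and that the identification of $\phi(\eta)$ via the formula is legitimate for the specific decomposition of $p$ chosen, which is exactly what well-definedness of $\phi$ secures. A secondary point is the passage to subsequences: one needs that every subsequence of $(a_n)$ admits a further subsequence of some type $(I, a^I)$, which follows from compactness of $\overline{\aa^+} \cup \{\infty\}$-type considerations (bounding the convergent directions and sending the rest to infinity), together with the fact that the map $\eta \mapsto \phi(\eta)$ is already known to be well-defined independent of the chosen type.
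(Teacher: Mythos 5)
Your proposal is correct and follows essentially the same route as the paper: pass to a subsequence so that $(a_n)$ has a type $(I,a^I)$ and $\psi^X_{a_n\cdot p_0}\to\xi$, use Lemma~\ref{lem:limit invariance} to get invariance of $\xi$ under $a^IK^IM(a^I)^{-1}N_I$, reduce to the flat via the decomposition of Lemma~\ref{lem:relative Iwasawa}, and invoke well-definedness of $\phi$ to conclude $\xi=\phi(\eta)$. The point you flag as the main obstacle is harmless here, since $\psi^F$ is by definition built from the restriction of $d$ to $F$, so $\psi^X_{a_n\cdot p_0}$ and $\psi^F_{a_n\cdot p_0}$ agree on $F$ automatically.
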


\begin{proof}
      Up to passing to a subsequence, we may assume that the sequence $(a_n)_{n \in \NN}$ has some type $(I,a^I)$ and that the sequence $(\psi^X_{a_n \cdot p_0})_{n \in \NN}$  converges to some $\xi$. According to Lemma \ref{lem:limit invariance}, $\xi$ is invariant under $a^IK^IM(a^I)^{-1}N_I$, so for every $p=a^Ik^I(a^I)^{-1}u_Ia \cdot p_0 \in X=a^IK^I(a^I)^{-1}N_IA \cdot p_0$, we have $\xi(p)=\xi(a \cdot p_0)=\eta(a \cdot p_0)$.

      Furthermore, since $\phi$ is well-defined and $\eta$ has type $(I,a^I)$, we can use this type in the definition of $\phi(\eta)$, and thus $\phi(\eta)(p)=\eta(a \cdot p_0)=\xi(p)$. In conclusion, $\xi=\phi(\eta)$, so $(\psi^X_{a_n \cdot p_0})_{n \in \NN}$  converges to $\phi(\eta)$ in $C(X)$.
\end{proof}

\begin{lem}
      Let $(\eta_n)_{n \in \NN}$ be a sequence in $\partial\overline{\psi^F(F^+)}^{\widetilde{C}(F)}$ converging to $\eta$ in $\widetilde{C}(F)$. Then $(\phi(\eta_n))_{n \in \NN}$ converges to $\phi(\eta)$ in $\widetilde{C}(X)$.
\end{lem}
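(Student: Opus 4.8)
The plan is to prove continuity of $\phi$ at a boundary point $\eta$ by a diagonal argument that reduces everything to the previous lemma (the one asserting that if $(a_n)_{n\in\NN} \subset \overline{A^+}$ satisfies $\psi^F_{a_n \cdot p_0} \lora \eta$ in $\widetilde{C}(F)$, then $\psi^X_{a_n \cdot p_0} \lora \phi(\eta)$ in $\widetilde{C}(X)$). First I would set up the bookkeeping: work in the $C_{p_0}$-model, so that convergence in $\widetilde{C}(F)$ and in $\widetilde{C}(X)$ means uniform convergence, on every $d_{sym}$-bounded set, of the representatives vanishing at $p_0$. Since $X$ and $F$ are proper, these bounded sets are compact, and both function spaces are metrizable, so sequential and diagonal arguments are legitimate. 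I would write $D_r$ for the closed $d_{sym}$-ball of radius $r$ about $p_0$, in whichever of $X$ or $F$ is relevant.

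The core of the argument is a diagonal extraction. For each $n$, since $\eta_n$ lies in $\overline{\psi^F(F^+)}^{\widetilde{C}(F)}$, which is the closure of $\{\psi^F_{a \cdot p_0} : a \in \overline{A^+}\}$, and this space is metrizable, I can pick a sequence $(a_{n,m})_{m\in\NN}$ in $\overline{A^+}$ with $\psi^F_{a_{n,m}\cdot p_0} \lora \eta_n$ in $\widetilde{C}(F)$ as $m \to \infty$; by the previous lemma this gives $\psi^X_{a_{n,m}\cdot p_0} \lora \phi(\eta_n)$ in $\widetilde{C}(X)$ as $m \to \infty$. I would then choose $m(n)$ large enough that, setting $b_n := a_{n,m(n)} \in \overline{A^+}$, both $\sup_{D_n}|\psi^F_{b_n\cdot p_0} - \eta_n| \le 1/n$ and $\sup_{D_n}|\psi^X_{b_n\cdot p_0} - \phi(\eta_n)| \le 1/n$.

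Finally I would pass to the limit in $n$. Because the first estimate holds on the exhausting family $(D_n)$ with error $1/n \to 0$ and $\eta_n \lora \eta$ in $\widetilde{C}(F)$ by hypothesis, a short two-$\varepsilon$ estimate gives $\psi^F_{b_n\cdot p_0} \lora \eta$ in $\widetilde{C}(F)$; applying the previous lemma once more yields $\psi^X_{b_n\cdot p_0} \lora \phi(\eta)$ in $\widetilde{C}(X)$. Combining this with the second estimate: for any compact $Q \subset X$ and $\varepsilon > 0$, once $Q \subset D_n$, $1/n < \varepsilon/2$ and $\sup_Q|\psi^X_{b_n\cdot p_0} - \phi(\eta)| < \varepsilon/2$, the triangle inequality gives $\sup_Q|\phi(\eta_n) - \phi(\eta)| < \varepsilon$. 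Hence $\phi(\eta_n) \lora \phi(\eta)$ in $\widetilde{C}(X)$. Together with the already observed continuity of $\phi$ on the open part $\psi^F(F^+)$, this establishes that $\phi$ is continuous, completing the proof of Theorem~\ref{thm:embedding flat in X}.

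I do not expect a deep new obstacle here: the entire content of the preceding lemma is precisely that ``convergence in the intrinsic flat compactification'' implies ``convergence in $\overline{\psi^X(X)}^{\widetilde{C}(X)}$'', so the only real work is to organize the diagonal so that a \emph{single} choice of $b_n$ simultaneously approximates $\eta_n$ in $\widetilde{C}(F)$ and $\phi(\eta_n)$ in $\widetilde{C}(X)$ --- which is possible exactly because those two convergences are linked by that lemma. The points one must be careful about are purely topological: that $\widetilde{C}(F)$ is metrizable (so $\eta_n$ really is a sequential limit of points $\psi^F_{a\cdot p_0}$, $a\in\overline{A^+}$), and that convergence in these function spaces is exactly uniform convergence on the balls $D_n$, so that the ``error $\le 1/n$ on $D_n$'' bookkeeping upgrades to genuine convergence.
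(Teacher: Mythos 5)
Your proof is correct, but it follows a genuinely different route from the paper. You derive the lemma formally from the preceding one by a diagonal extraction: approximate each $\eta_n$ by $\psi^F_{a_{n,m}\cdot p_0}$ with $a_{n,m}\in\overline{A^+}$, use that lemma (applied to $\eta_n$) to get the matching convergence $\psi^X_{a_{n,m}\cdot p_0}\to\phi(\eta_n)$, pick $b_n=a_{n,m(n)}$ with both errors at most $1/n$ on the $d_{sym}$-balls of radius $n$, observe $\psi^F_{b_n\cdot p_0}\to\eta$, apply the lemma once more to the diagonal sequence, and conclude by a triangle inequality. The ingredients you need are soft: metrizability of $\widetilde{C}(F)$ and $\widetilde{C}(X)$ (uniform convergence on a countable exhaustion by bounded sets), and the fact that the preceding lemma holds for an \emph{arbitrary} boundary point, not just the fixed $\eta$ --- true, since its proof is uniform in the horofunction, but worth stating explicitly since the paper phrases it only for the fixed $\eta$. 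The paper instead argues structurally: it extracts (by compactness) a convergent subsequence $\phi(\eta_n)\to\xi$, arranges common types $(I,a_n^I)$ for the $\eta_n$ and a type $(J,a^J)$ for $\eta$ via a diagonal sequence, and then identifies $\xi=\phi(\eta)$ pointwise using the Chabauty convergence $a_n^IK^IM(a_n^I)^{-1}N_I\to a^JK^JM(a^J)^{-1}N_J$ together with the explicit horocyclic-decomposition formula defining $\phi$. Your approach is more elementary and needs no compactness or Lie-theoretic input at this stage, showing the continuity lemma is essentially a formal consequence of the sequence lemma; the paper's approach stays within the type/Chabauty framework used throughout the section and exhibits the limit function explicitly, at the cost of several subsequence extractions. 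Your aside that bounded sets are compact (properness of $d_{sym}$) is true but not actually needed --- only the countable exhaustion by balls matters.
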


\begin{proof}
      Up to passing to a subsequence, we may assume that the sequence $(\phi(\eta_n))_{n \in \NN}$ converges to some horofunction $\xi$ in $\widetilde{C}(X)$. Up to passing again to a subsequence, we may assume that there exists $I \subsetneq \Delta$ such that for each $n \in \NN$, $\eta_n$ is of type $(I,a_n^I)$ for some $a_n^I \in A^I$. For each $n \in \NN$, consider a sequence $(a_{n,m})_{m \in \NN}$ of type $(I,a_n^I)$ converging to $\eta_n$. Up to passing to a subsequence, we may assume that the sequence $(a_n^I)_{n \in \NN}$ is of type $(J,a^J)$ for some $J \subset I$ and some $a^J \in A^J$. For each $n \in \NN$, one can find some $m_n \in \NN$ such that the sequence $(a_{n,m_n})_{n \in \NN}$ is of type $(J,a^J)$ and converges to $\eta$.
 
      Fix 
      \[
           p=a^Jk^J{a^J}^{-1}u_Jc \cdot p_0 \in X=a^JK^J{a^J}^{-1}N_JA \cdot p_0.
      \]
 Since the sequence $(a_n^IK^IM{a_n^I}^{-1}N_I)_{n \in \NN}$ converges to $a^JK^JM{a^J}^{-1}N_J$ in the Chabauty topology (see~Proposition~\ref{limits of K}), there exist sequences $(k_n^I)_{n \in \NN}$ in $K^IM$ and $(u_{n,I})_{n \in \NN}$ in $N_I$ such that the sequence $(a_n^Ik_n^I{a_n^I}^{-1}u_{n,I})_{n \in \NN}$ converges to $a^Jk^J{a^J}^{-1}u_J$. Hence 
 \begin{align*}
      \xi(p) &= \lim_{n \rightarrow +\infty} \phi(\eta_n)(a_n^Ik_n^I{a_n^I}^{-1}u_{n,I} c \cdot p_0) \\
      &= \lim_{n \rightarrow +\infty} \eta_n(c \cdot p_0) \\
      &= \eta(c \cdot p_0) \\
      &= \phi(\eta)(a^Jk^J{a^J}^{-1}u_Jc \cdot p_0) = \phi(\eta)(p).
 \end{align*}

      As a consequence, we have $\xi=\phi(\eta)$, so the sequence $(\phi(\eta_n))_{n \in \NN}$ converges to $\phi(\eta)$.
\end{proof}

So we have proved that the map $\phi$ in Theorem~\ref{thm:embedding flat in X} is continuous. This concludes the proof of Theorem~\ref{thm:embedding flat in X}.

\section{Realizing classical compactifications of symmetric spaces} \label{sec:satake}
In this section we prove that all Satake and generalized Satake compactifications can be realized as horofunction compactifications of polyhedral $G$-invariant Finsler metrics on $X$.

\subsection{Generalized Satake compactifications}
We first recall the construction of generalized Satake compactifications. See also \cite{GuichardKasselWienhard}. Let $X = G/K$ be a symmetric space of non-compact type.

Consider the space
\[
 \Pn \coloneqq \rquotient{\PSL(n,\CC)}{\PSU(n)}
\]
and identify it via the map $m \PSU(n) \longmapsto m m^*$ with the space of positive definite Hermitian matrices, where $m^*$ denotes the conjugate transpose of $m \in  \PSL(n, \CC)$. Let $\Hn$ be the real vector space of Hermitian matrices and $\PHn$ the corresponding compact projective space.  For $A \in \Hn$ we denote the corresponding equivalence class in $\PHn$ by $[A]$. As $\Pn \subset\Hn$, the map $A \longmapsto [A]$ is a $\PSL(n, \CC)$-equivariant embedding and we define 
\[
      \PSat \coloneqq  \overline{i(\Pn)} \subset \PHn
\]
to be the \emph{Standard-Satake compactification}. 

Now let $\tau: G \lora \PSL(n, \CC)$ be a faithful projective representation of $G$. Via the map 
\begin{equation} \label{itau}
      \begin{aligned} 
	  i_\tau: X = G/K &\lora \Pn \\
	  gK &\longmapsto \tau(g)\tau(g)^*
      \end{aligned}
\end{equation}
we can embed $X$ into $\Pn$ as totally geodesic submanifold. There is a 1-to-1-correspondence between such embeddings and faithful projective representations  of $G$ into $\PSL(n, \CC)$ with the additional condition $\tau(\vartheta(g)) = (\tau(g)^*)^{-1}$ for all $g \in G$, where $\vartheta$ denotes the Cartan involution on $G$. With this we define 
\[
 \xsat \coloneqq \overline{i_\tau(X)} \subset \PSat
\]
as the \emph{generalized Satake compactification} of $X$ with respect to the representation $\tau$. By the action of $G$ on $\Pn$, $g \cdot A = \tau(g) A \tau(g)^*$ for $g \in G$ and $A \in \Pn$, the first embedding $i_\tau$ is $G$-equivariant and therefore $\xsat$ is a \mbox{$G$-compactification}, that is, the $G$-action on $X$ extends to a continuous action on $\xsat$.  
When $\tau$ is an irreducible representation, the compactification $\xsat$ is a classical Satake compactification, which has been introduced and described by Satake in \cite{Sat}. In general, when $\tau$ is reducible, the compactification $\xsat$ is a generalized Satake compactification as introduced and described in \cite{GuichardKasselWienhard}. 

Note that there are finitely many isomorphism classes of Satake compactifications, one associated to any proper subset $I\subset \Delta$, but infinitely many isomorphism classes of generalized Satake compactifications.

\subsection{The compactification of a flat in a generalized Satake compactification} 
We now compare the generalized Satake compactification with the horofunction compactification of $X$ with respect to an appropriate polyhedral $G$-invariant Finsler metric.

With the Cartan decomposition (see Lemma \ref{cartan} on page \pageref{cartan}) we can write $X = K \overline{A^+}.p_0$, and since $K$ is compact $\overline{X} =\overline{K \overline{A^+}.p_0}= K.\overline{\overline{A^+} p_0}$. Thus it is sufficient to show that the we have an W-equivariant homeomorphism between the closures of $A.p_0$  in the horofunction compactification and the generalized Satake compactification respectively. 

For the closure of the flat $F= A.p_0$ in the generalized Satake compactification we have the following:

\begin{thrm}[\cite{Ji-Art} Prop.4.1, \cite{GuichardKasselWienhard}]\label{thm:flat_Satake}
Let $\tau: G \to \PSL(n,\CC)$ be a faithful projective representation. Let $\mu_1, \ldots, \mu_k$ be the weights of $\tau$. 
Then the closure of the flat $A.p_0$ in the generalized Satake compactification $\xsat$ is $W$-equivariantly isomorphic to $\conv(2\mu_1, \ldots, 2\mu_k) \subset \aa^*$. 
\end{thrm}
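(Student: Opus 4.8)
The plan is to reduce the statement to an explicit torus-orbit-closure computation inside $\PHn$, and then to identify the resulting orbit closure with the weight polytope by means of a moment (barycenter) map.

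\emph{Step 1: the embedding restricted to a flat.} Restrict the map $i_\tau$ of \eqref{itau} to the flat $e^{\aa}\cdot p_0$. Differentiating the normalization relation $\tau(\vartheta(g))=(\tau(g)^*)^{-1}$ and using that $\vartheta$ acts as $-\id$ on $\pp\supseteq\aa$ shows that $d\tau(H)$ is Hermitian for every $H\in\aa$. Since $\aa$ is abelian, $\{d\tau(H):H\in\aa\}$ is a commuting family of Hermitian matrices, hence simultaneously diagonalizable, with joint eigenvalues the restricted weights $\mu_1,\dots,\mu_k\in\aa^*$ (of multiplicities $m_1,\dots,m_k$). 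In a weight basis of $\CC^n$ we get
\[
  i_\tau(e^H\cdot p_0)=\tau(e^H)\tau(e^H)^*=e^{2d\tau(H)}=\diag\!\big(e^{2\mu_1(H)}I_{m_1},\dots,e^{2\mu_k(H)}I_{m_k}\big),
\]
so the closure of the flat in $\xsat$ equals, via $i_\tau$, the closure in $\PHn$ of
\[
  T\coloneqq\big\{\big[\diag(e^{2\mu_1(H)}I_{m_1},\dots,e^{2\mu_k(H)}I_{m_k})\big]:H\in\aa\big\},
\]
which lies inside the closed simplex $\overline{\Theta}$ obtained by projectivizing the cone of positive semidefinite matrices that are diagonal in this basis.

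\emph{Step 2: the barycenter map.} Consider the continuous affine surjection
\[
  \Phi:\overline{\Theta}\lora P\coloneqq\conv(2\mu_1,\dots,2\mu_k)\subset\aa^*,\qquad\big[\diag(c_1I_{m_1},\dots,c_kI_{m_k})\big]\longmapsto\frac{\sum_i m_ic_i\cdot 2\mu_i}{\sum_i m_ic_i}
\]
(the $c_i$ being $\geq 0$ and not all $0$). The theorem amounts to showing that $\Phi|_{\overline{T}}:\overline{T}\to P$ is a $W$-equivariant homeomorphism. For the equivariance, represent $w\in W=\N_K(\aa)/\C_K(\aa)$ by some $k\in\N_K(\aa)$; from $d\tau(\Ad(k)H)=\tau(k)\,d\tau(H)\,\tau(k)^{-1}$ one sees that $\tau(k)$ permutes the weight spaces according to the $W$-action on the multiset $\{\mu_i\}$. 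Hence this multiset is $W$-stable (so $P$ is), $\overline{T}$ is $W$-stable, the $G$-action on $\PHn$ restricts on $\overline{\Theta}$ to the corresponding coordinate permutation, and $\Phi$ intertwines it with the linear $W$-action on $\aa^*\supseteq P$.

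\emph{Step 3: the homeomorphism.} The crucial observation is that $\Phi$, pulled back along the parametrization of $T$, is the gradient of a log-sum-exp function:
\[
  \Phi\big([\,e^{2d\tau(H)}\,]\big)=\sum_i\frac{m_ie^{2\mu_i(H)}}{\sum_j m_je^{2\mu_j(H)}}\cdot 2\mu_i=\nabla g(H),\qquad g(H)\coloneqq\log\Big(\sum_j m_je^{2\mu_j(H)}\Big).
\]
The function $g$ is smooth and convex; by the Legendre-duality description of the mean map of an exponential family one has $\nabla g(\aa)=\ri P$, and since $g$ is strictly convex on the relevant quotient of $\aa$, $\Phi$ maps $T$ homeomorphically onto $\ri P$. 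For a boundary point $\zeta=\lim_n[\,e^{2d\tau(H_n)}\,]\in\overline{T}\setminus T$, after passing to a subsequence and normalizing representatives one finds that its support $S=\{i:\zeta_i\neq 0\}$ is exactly $\{i:2\mu_i\in F\}$ for a uniquely determined proper face $F$ of $P$; this is forced by the asymptotics of the linear functions $2(\mu_i-\mu_j)$ along $(H_n)$, namely the ones escaping to $+\infty$ cut out precisely the face $F$. Applying the same log-sum-exp identity to the sub-family indexed by $S$ then identifies the corresponding stratum of $\overline{T}$ with $\ri F$. Conversely, every face $F$ is realized: choosing $v\in\aa$ with $\langle v,\mu_i\rangle$ maximal exactly on $S=\{i:2\mu_i\in F\}$, the limits $\lim_{t\to+\infty}[\,e^{2d\tau(H+tv)}\,]$, $H\in\aa$, sweep out $\ri F$ inside $\overline{T}$. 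Therefore $\overline{T}=\bigsqcup_{F\preceq P}(\text{stratum over }\ri F)$ and $\Phi|_{\overline{T}}$ is a continuous bijection onto $\bigsqcup_F\ri F=P$; since $\overline{T}$ is compact (closed in the compact space $\PHn$) and $P$ is Hausdorff, it is a homeomorphism.

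The step I expect to be the main obstacle is the boundary analysis in Step 3: proving that the supports of limit points of $T$ are precisely the index sets $\{i:2\mu_i\in F\}$ attached to the faces $F$ of $P$, and that every such face is realized --- that is, establishing the full toric stratification of $\overline{T}$ rather than merely a surjection onto $P$. This is governed entirely by the asymptotic comparison of the functionals $\mu_i$ along diverging directions in $\aa$, and can alternatively be imported from the known description of closures of maximal flats in Satake compactifications (see \cite{GJT,BJ}); the remaining points --- Hermiticity of $d\tau(\aa)$, the properties of $\Phi$, the gradient identity, and the fact $\nabla g(\aa)=\ri P$ --- are routine.
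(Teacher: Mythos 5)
The paper gives no proof of this statement: it is imported verbatim from \cite{Ji-Art} (Prop.~4.1) and \cite{GuichardKasselWienhard}, so there is no internal argument to compare yours against. What you propose is essentially the standard argument underlying those references --- restrict $i_\tau$ to the flat, use $\tau(\vartheta(g))=(\tau(g)^*)^{-1}$ to see that $d\tau|_{\aa}$ is a commuting family of Hermitian matrices, so the flat maps to a positive ``torus orbit'' of projectivized diagonal matrices with exponents the $2\mu_i$, and then identify the closure of that orbit in $\PHn$ with $\conv(2\mu_1,\dots,2\mu_k)$ via the barycenter (algebraic moment) map --- and it is correct in outline. The $W$-equivariance argument via $\tau(k)$ permuting weight spaces is fine, as is the gradient/log-sum-exp identity. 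One detail you hedge on can be made clean: since $\tau$ is faithful and $\sum_i m_i\mu_i=0$ (trace zero), the weights $2\mu_i$ affinely span $\aa^*$, so $g$ is strictly convex on all of $\aa$, the parametrization $H\mapsto[e^{2d\tau(H)}]$ is injective, and $\ri P=\intt P$; no quotient of $\aa$ is needed.

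The genuine gap is exactly the one you flag in Step 3. The asymptotic claim that the support $S$ of a limit point equals $\{i: 2\mu_i\in F\}$ for a face $F$ of $P$ does not follow from the one-step argument of extracting a limit direction $u=\lim H_n/\lVert H_n\rVert$: that only yields that $2\mu_i(u)$ is constant on $S$ and maximal there, i.e.\ $S\subseteq\{i:2\mu_i\in F_u\}$ for the face exposed by $u$, not equality with a face. To close this you must either iterate (extract a flag of directions recording the successive growth rates of the differences $2(\mu_i-\mu_j)(H_n)$, and argue by induction on the dimension of the face), or import the description of flat closures from \cite{GJT} --- which is legitimate provided you take it from there and not from the statement being proved, since \cite{Ji-Art} is precisely one of the sources for this theorem. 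The converse direction (every face is realized, and its stratum is swept out by the limits you describe) is unproblematic because all faces of a polytope are exposed. With that boundary analysis spelled out, your proof is complete and is the same route as the cited sources, and it also matches the moment-map identification the paper later invokes in the proof of Proposition~\ref{prop:comp_flat}.
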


\begin{rem}
 Because of the symmetry of the weights with respect to the Weyl chambers, the convex hull of all weights is the same as the convex hull of the Weyl-group orbit of $\chi_1, \cdots, \chi_l$, where $\chi_i$ are the highest weights of the irreducible components $\tau_i$ of $\tau$:  
 \[
  \conv(2 \mu_1, \ldots, 2\mu_k) = \conv\big( W(2 \chi_1), \cdots ,W(2 \chi_l)\big).
 \]
\end{rem}

\begin{ex}\label{Ex:SL3}
 Let us look at an example. Take  $X = \SL(3,\CC) / \SU(3)$ with the adjoint representation $\ad$ of $\gg$ which induces a representation on $G$. The weights of the adjoint representation are exactly the roots $\alpha_{ij} \in \aa^*$ with $1\leq i \neq j \leq 3$, where 
 \[
  \alpha_{ij} (H) = h_i - h_j 
 \]
 for any diagonal matrix $H = \diag(h_1, h_2, h_3) \in \aa$. The highest weight with respect to the positive Weyl chamber 
\[
      \aa^+ \coloneqq \left \{ \diag(h_1, h_2, h_3) \in \sl(3,\CC) \left | h_1 > h_2 > h_3, \ \sum_{i=1}^3 h_i = 0 \right. \right \}
\]
 is $\alpha_{13}$.

We identify $\aa$ with $\aa^*$ using the Killing form $\kappa$. 
Then the convex hull of the weights is shown in Figure \ref{fig:conv(mu)_SL(3,R)}.

 \begin{figure}[ht!]
	\centering
		\includegraphics{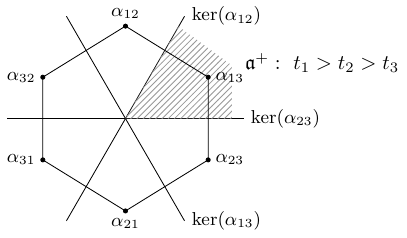}
	\caption{$\conv(\mu_1, \ldots, \mu_6)$ for the representation $\tau = \ad$ in $\aa^*$.} \label{fig:conv(mu)_SL(3,R)}
 \end{figure}

 By Theorem \ref{thm:flat_Satake} we know that the closure of the flat in $\xsat$ is homeomorphic to \mbox{$\conv(2\mu_1, \ldots, 2\mu_k) \subset \aa^*$.} If the highest weight is regular, like for $\tau= \ad$ as in this example, we obtain the maximal Satake compactification. \\
 To get the two minimal Satake compactifications, the highest weight has to lie on a singular direction, see Figure \ref{Fig:triangles} for a picture. The representations here are the standard and the dual standard representation. 
 
 \begin{figure}[h!]
  \centering
 	\includegraphics{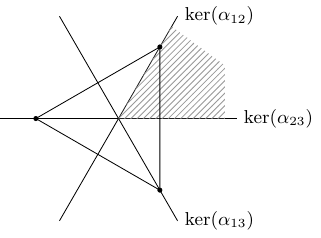}
 	\includegraphics{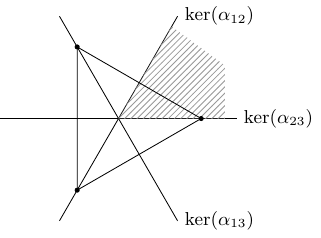}
 \caption{These two convex hulls correspond to the standard and the dual standard representations.}
 \label{Fig:triangles}
 \end{figure}
 
 If we now take the convex hull of these two triangles, we again obtain a hexagon but now with its vertices on the singular directions, see Figure \ref{Fig:hexagon_singular}. This compactification of the flat corresponds to a generalized Satake compactification associated to the direct sum of the standard and the dual standard representation. It is the same as the polyhedral compactification of the flat with respect to the polyhedral decomposition of $\aa$ with respect to the Weyl chambers.
 
  \begin{figure}[h!]
 \centering
	  \includegraphics{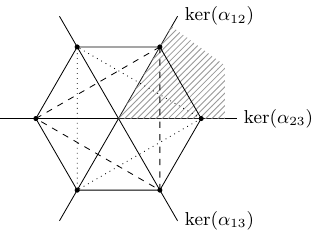}
  \caption{The convex hull of the two balls above give a hexagon with vertices on the singular directions.}
  \label{Fig:hexagon_singular}
 \end{figure}

 \end{ex}
 
\begin{prop} \label{prop:comp_flat}
 Let $X = G / K$ be a symmetric space of non-compact type. 
 Let $\tau$ be a faithful projective representation of $G$, and  $\mu_1, \ldots, \mu_n$ its weights. 
Let $D \coloneqq \conv(2\mu_1, \ldots, 2\mu_n) \subset \aa^*$. 
 Let $B = -D^\circ$ the dual closed convex set  in the maximal abelian subalgebra $\aa \subset \pp \subset \gg$. 
 Then the closure of the flat $A.p_0$ in the generalized Satake compactification is $W$-equivariantly isomorphic to the closure of the flat $A.p_0$ 
 in the horofunction compactification of $X$ with respect to the Finsler metric defined by $B$. 
 \end{prop}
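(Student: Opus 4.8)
The plan is to assemble three $W$-equivariant identifications of flat-closures and reconcile them by one polarity identity.

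\textbf{Step 1: the Finsler metric defined by $B$ is polyhedral.} First I would verify that $B = -D^\circ$ is a bounded, $W$-invariant convex polytope with $0$ in its interior, so that by Theorem~\ref{planche} it defines a polyhedral $G$-invariant Finsler metric $d$. Since $\tau$ is faithful, so is its restriction to the vector group $A = \exp(\aa)$, and as $\tau|_A$ acts by commuting positive Hermitian matrices (because $\tau(\vartheta(g)) = (\tau(g)^*)^{-1}$ forces $\tau(a)=\tau(a)^*$ for $a\in A$), the weights $\mu_1,\dots,\mu_n$ span $\aa^*$; thus $D = \conv(2\mu_1,\dots,2\mu_n)$ is full-dimensional. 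It contains $0$ since the weights of $\tau$, counted with multiplicity, sum to zero (the image lies in $\PSL(n,\CC)$), so $0$ is a convex combination of them; and a full-dimensional $W$-invariant convex body containing $0$ contains it in its interior, because the $W$-action on $\aa$ has no nonzero fixed vector and hence no nonzero $W$-invariant supporting functional at $0$. Therefore $-D$ is a polytope with $0$ in its interior, its polar $B = (-D)^\circ = -D^\circ$ is again a bounded polytope, and $B$ is $W$-invariant because the weight set is.

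\textbf{Step 2: from $\overline{X}^{hor}$ to the dual polytope $\Bo$.} Since $d$ is polyhedral, it satisfies the Convexity Lemma (Lemma~\ref{lem:convexity lemma}), so Theorem~\ref{thm:flat_compactification} — obtained from the explicit chamberwise embedding $\phi$ of Theorem~\ref{thm:embedding flat in X}, $W$-translated over the Weyl chambers — gives an isomorphism between the closure of $F = e^{\aa}\cdot p_0$ in $\overline{X}^{hor}$ and the intrinsic horofunction compactification $\overline{\psi^F(F)}^{\widetilde{C}(F)}$ of $F$. This can be taken $W$-equivariant: $N_K(A)$ acts by isometries on $(F,d)$ (as $B$ is $W$-invariant) and on $X$, and $\phi$ is built solely from the distance function. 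Under $\exp\colon\aa\to F$ the intrinsic compactification of $F$ is the horofunction compactification of the normed space $(\aa,\lVert\cdot\rVert)$ with unit ball $B$, which Theorem~\ref{thrm:flatcompactification} (Ji--Schilling) identifies homeomorphically, and again $W$-equivariantly, with the dual polytope $\Bo\subset\aa^*$.

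\textbf{Step 3: polarity and comparison with the Satake side.} By the bipolar theorem applied to the compact convex polytope $-D$ with $0$ in its interior, $\Bo = \bigl((-D)^\circ\bigr)^\circ = -D$, and the linear map $\mu\mapsto-\mu$ is a $W$-equivariant isomorphism $-D\cong D = \conv(2\mu_1,\dots,2\mu_n)$. Finally, Theorem~\ref{thm:flat_Satake} gives a $W$-equivariant isomorphism between the closure of $e^{\aa}\cdot p_0$ in the generalized Satake compactification $\xsat$ and $D$. Composing these four isomorphisms yields the stated $W$-equivariant isomorphism between the closure of the flat $e^{\aa}\cdot p_0$ in the horofunction compactification and its closure in the generalized Satake compactification.

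\textbf{Main obstacle.} The real content is already supplied by Theorem~\ref{thm:flat_compactification}/\ref{thm:embedding flat in X} together with the Convexity Lemma, and by Theorem~\ref{thm:flat_Satake}; what remains is essentially bookkeeping, concentrated in two places. First, one must confirm $0\in\intt(D)$, so that $B$ is genuinely the unit ball of a norm and the bipolar theorem applies. Second, one must check that the two imported identifications — the flat-embedding $\phi$ and the Ji--Schilling homeomorphism — are $W$-equivariant; in both cases this holds because the map is constructed canonically from $W$-invariant data (the distance $d$, resp.\ the polytope $B$), so neither step is a serious difficulty, but each should be written out explicitly.
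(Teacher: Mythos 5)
Your skeleton is the same as the paper's: reduce the closure of the flat in $\xhor$ to the intrinsic horofunction compactification of the flat via Theorem~\ref{thm:embedding flat in X} (using the Convexity Lemma~\ref{lem:convexity lemma} for the polyhedral norm), identify that with the dual polytope $\Bo$ by Theorem~\ref{thrm:flatcompactification}, identify the closure of the flat in $\xsat$ with $D=\conv(2\mu_1,\dots,2\mu_n)$ by Theorem~\ref{thm:flat_Satake}, and connect the two through the polarity $\Bo=-D$. Your Step 1 (full-dimensionality of the weight span, $0\in\intt D$, $W$-invariance of $B$) is extra care the paper does not spell out, and your averaging argument for $0\in\intt D$ is fine.

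The one substantive point you omit is the point the paper is careful to record at the end of its proof: composing the four identifications only produces an \emph{abstract} $W$-equivariant homeomorphism between the two closures, and that is not quite what is needed. Each of the identifications of the flat's closure with the polytope $D$ (the Ji--Schilling one and the Satake one) restricts on the open flat $e^{\aa}\cdot p_0$ to a specific moment-map-type parametrization of $\intt D$, and there is no a priori reason the composite map you build (which also passes through the negation $-D\to D$) restricts to the identity on $e^{\aa}\cdot p_0$. For ``isomorphic'' in the sense of compactifications --- and this is exactly what is used afterwards in Theorem~\ref{thm:comp}, whose proof runs by showing that a sequence converges in $\xsat$ if and only if it converges in $\xhor$ --- one must check that the two parametrizations of $\intt D$ by $\aa$ match, i.e.\ that a sequence $H_n\in\aa$ converges in the Satake closure if and only if $e^{H_n}\cdot p_0$ converges in the horofunction closure, with corresponding limits. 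The paper handles this by the remark that both identifications rely on a moment map and that a direct comparison of convergent sequences (via the convergence criterion of Theorem~\ref{mainthrmbusem} on the horofunction side and the description of limits in $\xsat$ on the Satake side) yields the compatibility. Without this comparison, your composition proves that the two closures are $W$-equivariantly homeomorphic topological spaces, but not that the identity of the flat extends to such a homeomorphism, which is the content actually needed downstream; adding that comparison closes the gap.
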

\begin{proof}
By Theorem~\ref{thm:embedding flat in X}, it suffices to compare the closure of $A^+.p_0$  in the generalized Satake compactifications with the closure of $A.p_0$ in the flat compactification of $A.p_0$ with respect to the norm defined by $B$. By Theorem~\ref{thm:flat_Satake} and Theorem~\ref{thrm:flatcompactification}, both are W-equivariantly homeomorphic to the closed convex $\conv(2\mu_1, \ldots, 2\mu_n) = D = -B^\circ$. 

Note that in Theorem~\ref{thm:flat_Satake} and Theorem~\ref{thrm:flatcompactification}, the identification of the closure of $A.p_0$ with $D$ relies on a moment map, and a direct comparison shows that a sequence $H_n \in \aa$  converges in the Satake compactification $\xsat$ if and only if it converges in the horofunction compactification $\xhor$ with respect to the $G$-invariant Finsler metric defined by $B$.
\end{proof}

\begin{thrm} \label{thm:comp}
 Let $X = G / K$ be a symmetric space of non-compact type. 
 Let $\tau$ be a faithful projective representation of $G$ and $\mu_1, \ldots, \mu_n$ its weights. Let $D \coloneqq \conv(\mu_1, \ldots, \mu_n) $. 
 Let $B = -D^\circ$ define a unit ball in the maximal abelian subalgebra $\aa \subset \pp \subset \gg$. 
 Then the generalized Satake compactification $\xsat$ is $G$-equivariantly isomorphic to the horofunction compactification of $X$ with respect to the Finsler metric defined by $B$. 
\end{thrm}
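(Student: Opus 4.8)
The plan is to bootstrap from the comparison of flat closures in Proposition~\ref{prop:comp_flat} using the Cartan decomposition $G=K\overline{A^+}K$. A preliminary remark: replacing $D=\conv(\mu_1,\dots,\mu_n)$ by $\conv(2\mu_1,\dots,2\mu_n)$ only rescales $B$, hence the Finsler norm, by a constant, and the horofunction compactification is unchanged $G$-equivariantly under such a rescaling (the homeomorphism $f\mapsto\lambda f$ of $\widetilde{C}(X)$ carries the embedding for $d$ to the embedding for $\lambda d$), so we may use the normalization of Proposition~\ref{prop:comp_flat}. Also, $\xhor$ is a $G$-compactification: $G$ acts on $X$ by isometries of $d$, the computation of Lemma~\ref{K_equivariance} gives $g\cdot\psi^X_z=\psi^X_{gz}$ in $\widetilde{C}(X)$ for all $g\in G$, and the $G$-action on $\widetilde{C}(X)$ is continuous (using that $(X,d_{sym})$ is proper), so it preserves and acts continuously on the compact closure $\xhor$. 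The compactification $\xsat$ is a $G$-compactification by construction.

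By the Cartan decomposition and compactness of $K$, every point of $\xhor$ has the form $k\cdot\xi$ with $k\in K$ and $\xi$ in the closure $\overline{F^+}^{hor}$ of the closed Weyl chamber $F^+=\overline{A^+}\cdot p_0$ (this is the Weyl-chamber reduction lemma following Lemma~\ref{K_equivariance}), and likewise every point of $\xsat$ is $k\cdot\xi$ with $\xi\in\overline{F^+}^{Sat}$. Proposition~\ref{prop:comp_flat} furnishes a $W$-equivariant homeomorphism $H$ between the closures of the flat $F=e^{\aa}\cdot p_0$ in $\xsat$ and in $\xhor$ that restricts to the identity on $F$; since $H$ is $W$-equivariant and fixes $F^+$ pointwise it carries $\overline{F^+}^{hor}$ onto $\overline{F^+}^{Sat}$, giving a homeomorphism $h\colon\overline{F^+}^{hor}\to\overline{F^+}^{Sat}$. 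I would then define $\Phi\colon\xhor\to\xsat$ by $\Phi|_X=\id_X$ and $\Phi(k\cdot\xi)=k\cdot h(\xi)$.

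The main obstacle is to show that $\Phi$ is well-defined, that is, that $k\xi=k'\xi'$ in $\xhor$ forces $k\,h(\xi)=k'\,h(\xi')$ in $\xsat$ for $k,k'\in K$ and $\xi,\xi'\in\overline{F^+}^{hor}$; writing $\kappa=(k')^{-1}k$ this reduces to the statement that $\kappa\xi=\xi'$ in $\xhor$ if and only if $\kappa\,h(\xi)=h(\xi')$ in $\xsat$. The equality $\kappa\xi=\xi'$ forces, on the flat, a Weyl element $w\in W$ carrying the type of $\xi$ to the type of $\xi'$, and then pins $\kappa$ down modulo the $K$-stabilizer of $\xi$; by Lemma~\ref{lem:limit invariance}, together with the sharpness extracted in the well-definedness analysis of Section~\ref{sec:well-definedness}, this stabilizer equals $a^IK^IM(a^I)^{-1}N_I\cap K$ when $\xi$ has type $(I,a^I)$. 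The point is that the same combinatorial subgroup stabilizes the corresponding boundary point of $\overline{F^+}^{Sat}$ in $\xsat$ (the stabilizers of boundary points of (generalized) Satake compactifications are the analogous parabolic-type groups; see \cite{GuichardKasselWienhard} and \cite{GJT}), so the two equivalences match up. This gives well-definedness, and running the same argument with $h^{-1}$ shows $\Phi$ is a bijection.

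Finally, continuity of $\Phi$ is routine: it is the identity on the open dense subset $X$, and if $k_n\xi_n\to k\xi$ in $\xhor$ then, passing to subsequences (compactness of $K$ and of $\overline{F^+}^{hor}$), $k_n\to k$ and $\xi_n\to\xi$, so $k_n\,h(\xi_n)\to k\,h(\xi)$ by continuity of $h$ and joint continuity of the $G$-action on $\xsat$; since the limit does not depend on the subsequence (well-definedness), $\Phi(k_n\xi_n)\to\Phi(k\xi)$. Equivalently, one may package continuity through the embedding $\phi$ of Theorem~\ref{thm:embedding flat in X} and its Satake counterpart. A continuous bijection of compact Hausdorff spaces is a homeomorphism, and $\Phi$ is $G$-equivariant by construction, so $\xsat$ and $\xhor$ are isomorphic as $G$-compactifications of $X$.
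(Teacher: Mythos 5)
Your proposal takes essentially the same route as the paper: the paper's proof of Theorem~\ref{thm:comp} is precisely the Cartan-decomposition reduction you describe --- write $x_n=k_na_n\cdot p_0$ with $a_n\in\overline{A^+}$, pass to a subsequence with $k_n\to k$ and $x_n$ convergent, and deduce the statement from Proposition~\ref{prop:comp_flat}, i.e.\ from the comparison of the closures of the flat (equivalently of the closed Weyl chamber, glued by the $K$-action). In fact your write-up is more detailed than the paper's four-line argument (the normalization remark about $\mu_i$ versus $2\mu_i$ and the explicit well-definedness discussion are points the paper leaves implicit); just note that the exact stabilizer identity you invoke --- that the $K$-stabilizer of a horofunction of type $(I,a^I)$ equals $K\cap a^IK^IM(a^I)^{-1}N_I$ and matches the Satake stabilizer --- does not follow from Lemma~\ref{lem:limit invariance} alone, which gives only one inclusion, so that step is asserted rather than proved, though the paper's own proof leaves the same compatibility point unaddressed.
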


\begin{proof}
We show that   a sequence converges in the generalized Satake compactification $\xsat$ if and only if it converges in the horofunction compactification $\xhor$ with respect to the $G$-invariant Finsler metric defined by $B$.
Let $x_n \in X$ be a sequence. Then we can write $ x_n  = k_n \cdot a_n p_0$, where $k_n \in K$ and $a_n \in \overline{A^+}$ is uniquely determined. 
Up to passing to a subsequence we can assume that $x_n$ converges in $\xsat$  and that $k_n$ converges to an element $k\in K$. 
Therefore Theorem~\ref{thm:comp} is a consequence of Proposition~\ref{prop:comp_flat}. 
\end{proof}

\begin{rem}
Note that Theorem~\ref{thm:comp} describes explicitly the convex unit ball of the Finsler metric which induces the horofunction compactification realizing the (generalized) Satake compactifications. 
For classical Satake compactifications the convex $D$ (and hence also the unit ball $B$) has a particularly simple description as it is just the convex hull of the Weyl group orbit of the highest weight vector of $\tau$. In order to obtain the Satake compactification determined by a subset $I \subset \Delta$ one has to choose a representation $\tau$, whose highest weight vector has support equal to $I$. 
\end{rem}

\begin{ex}
      We consider $X = \SL(4,\CC)/\SU(4)$ with the same notations as in Example \ref{Ex:SL3} above.  Let again $\tau = \ad$ be the considered representation.  Then the highest weight with respect to the positive Weyl chamber 
      \[
	  \aa^+ \coloneqq  \left \{\diag(t_1, \ldots, t_4) \in \sl(4,\CC) \left | t_1 > \ldots > t_4,  \ \sum_{i = 1}^{4} t_i = 0 \right. \right \}
      \]
      is $\mu_\tau = \alpha_{14}$. Let $M_\tau \in \aa$ be the element corresponding to $\mu_\tau$ by identifying $\aa$ and $\aa^*$ with the Killing form. Note that as $\alpha_{23}(M_\tau) = 0$, $M_\tau$ lies on a Weyl chamber wall. The Weyl chamber system is shown in Figure \ref{fig:Weyl_2zusammen_SL(4,R)}. The picture on the left illustrates the structure of the Weyl chamber walls while the one on the right shows the positive Weyl chamber we chose.

	\begin{figure}[h!]
	  \centering
		  \includegraphics{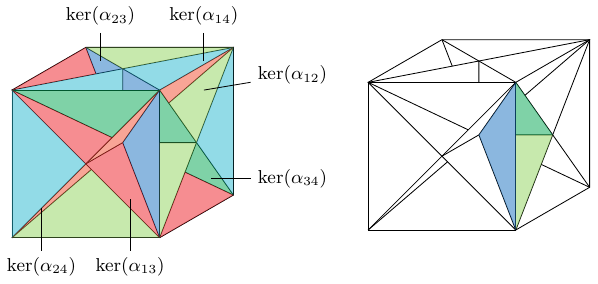}
	  \caption{The Weyl chamber system of $\SL(4,\RR)$ with all Weyl chamber walls (left) and the positive Weyl chamber we chose (right).}\label{fig:Weyl_2zusammen_SL(4,R)}
      \end{figure}

      \begin{figure}[h!]
	  \centering
		  \includegraphics{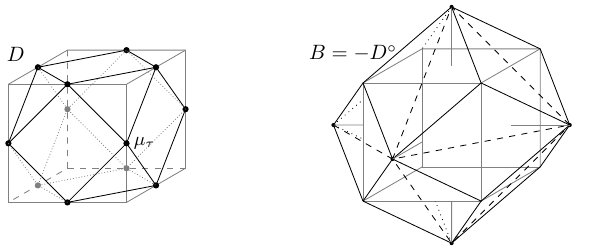}
	  \caption{$D=\conv(W(\mu_\tau))$ and $B=-D^\circ$ for $\tau=\ad$.}\label{Fig:SL(4,R)_BD_adjoint}
      \end{figure}

      The convex hull $D$ of the weights is a regular polyhedral ball with 12 vertices and 14 maximal dimensional faces. Accordingly, the unit ball \linebreak $B = -D^\circ$ has 14 vertices and 12 maximal dimensional faces, a picture of both is given in Figure \ref{Fig:SL(4,R)_BD_adjoint}. The dashed lines in the right picture are to indicate that always two triangles together form a rhomb. If we chose the Finsler metric corresponding to $B$ as unit ball, we obtain a horofunction compactification of the flat which is isomorphic to the Satake compactification with respect to $\tau=\ad$. 

      Let us now consider other representations. If the representative $M_\tau$ of the highest weight lies completely inside of $\aa^+$ we get $D$ and $B$ as shown in Figure \ref{Fig:SL(4,R)_BD_regular}. The polyhedron $D=\conv(W(\mu_\tau))$ is then called the permutohedron of dimension $3$. More generally, if $\tau$ is a faithful representation of $\SL(n-1,\RR)$ with regular $M_\tau$, then the polyhedron $D=\conv(W(\mu_\tau))$ is the $(n-1)$-dimensional permutohedron.

      \begin{figure}[h!]
	  \centering
		  \includegraphics{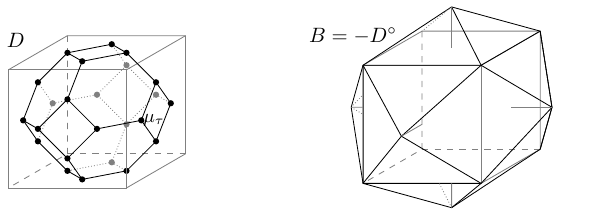}
	  \caption{$D$ and $B =-D^\circ$ for a representation with highest weight in a regular direction.} \label{Fig:SL(4,R)_BD_regular}
      \end{figure}

      On the other hand if $M_\tau$ lies in more than one Weyl chamber wall, the convex hull $D$ of the weights and the unit ball $B$ of the Finsler norm are like shown in Figure \ref{Fig:SL(4,R)_BD_singular} or a rotated version of it, depending on which pair of Weyl chamber walls $M_\tau$ lies.  

      \begin{figure}[h!]
	  \centering
		  \includegraphics{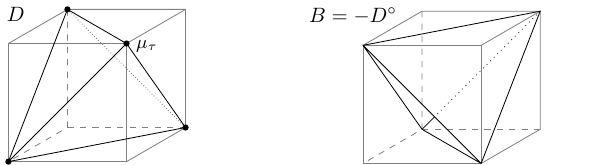}
	  \caption{$D$ and $B =-D^\circ$ for a representation with highest weight in a singular direction.} \label{Fig:SL(4,R)_BD_singular}
      \end{figure}

\end{ex}

\subsection{Dual generalized Satake compactifications}
The realization of generalized Satake compactifications as horofunction compactifications for polyhedral $G$-invariant Finsler metrics allows us to define the dual generalized Satake compactification ${\overline{X}_{\tau}^S}^*$ . 
\begin{defi}
Let $\tau: G \to \PSL(n,\CC)$ be a faithful projective representations and $\overline{X}_{\tau}^S$ the associated generalized Satake compactification. The dual generalized Satake compactification ${\overline{X}_{\tau}^S}^*$ is defined to be the horofunction compactification of $X$ with respect to the polyhedral $G$-invariant Finsler metric defined by the unit ball 
$B= D = \conv(\mu_1, \ldots, \mu_k)$.
\end{defi}

\begin{quest}
Is there a geometric way to interpret the duality between $\overline{X}_{\tau}^S$ and ${\overline{X}_{\tau}^S}^*$?
 \end{quest}

There are many polyhedral $G$-invariant Finsler metrics which are not related to generalized Satake compactifications, and even more $G$-invariant Finsler metrics which are not polyhedral. Since any Weyl group invariant convex set containing $0$ defines a $G$-invariant Finsler metric, it is very natural to ask whether natural operations on convex sets extend to natural operations on the corresponding horofunction compactifications.


\bigskip

\begin{flushright}
Thomas Haettel \\
IMAG, Univ Montpellier, CNRS, Montpellier, France\\
thomas.haettel@umontpellier.fr\\
\ \\ \ \\
Anna-Sofie Schilling \\
Mathematisches Institut \\
Ruprecht-Karls Universit\"at Heidelberg\\
Im Neuenheimer Feld 205, 69120 Heidelberg, Germany \\
aschilling@mathi.uni-heidelberg.de \\
\ \\ \ \\
Cormac Walsh \\
Inria, CMAP, Ecole polytechnique, CNRS, Université Paris-Saclay, 91128,
Palaiseau, France. \\
 cormac.walsh@inria.fr
\ \\ \ \\
Anna Wienhard \\
Mathematisches Institut\\
Ruprecht-Karls Universit\"at Heidelberg\\
Im Neuenheimer Feld~205, 69120 Heidelberg, Germany\\

\ \\
HITS gGmbH Heidelberg Institute for Theoretical Studies\\
Schloss-Wolfs\-brunnen\-weg 35, 69118 Heidelberg, Germany\\
wienhard@uni-heidelberg.de \\
\ \\

\end{flushright}

\begin{thebibliography}{WX}
 \bibitem[AGW]{AGW}
Marianne Akian, St{\'e}phane Gaubert, and Cormac Walsh.
\newblock {The max-plus {M}artin boundary}.
\newblock {Documenta Mathematica 14 (2009)}: 195-240. <http://eudml.org/doc/225846>.


\bibitem[Bee93]{Beer}
Gerald Beer.
\newblock {\em {Topologies on closed and closed convex sets}}, volume 268 of   {\em {Mathematics and its Applications}}.
\newblock Kluwer Academic Publishers Group, Dordrecht, 1993.

\bibitem[BJ06]{BJ}
Armand Borel and Lizhen Ji.
\newblock {\em {Compactifications of symmetric and locally symmetric spaces}}.
\newblock {Mathematics: Theory \& Applications}. Birkh{\"a}user Boston, Inc.,
  Boston, MA, 2006.

\bibitem[Bou59]{Bourbaki_integration}
Nicolas Bourbaki.
\newblock {\em {Intégration. Chapitre 8}}, in  {\em {\'{E}léments de mathématiques}}.
\newblock Hermann, 1959.

\bibitem[Brill]{Brill}
B. Brill.
\newblock{{\em A family of compactifications of affine buildings. (Eine Familie von Kompaktifizierungen affiner Geb{\"a}ude.)}}. 
\newblock {Berlin: Logos Verlag}. Frankfurt am Main: Fachbereich Mathematik (Dissertation), 2006.

\bibitem[CKS]{CKS}
C.Ciobotaru, L. Kramer, P. Schwer
\newblock{private communication}. 

  
\bibitem[FF04I]{FriedFreiI}
Shmuel Friedland and Pedro J. Freitas.
\newblock{p-Metrics on $GL(n,\CC)/U_n$ and their Busemann compactifications}.
\newblock{{\em Linear Algebra and its Applications}}, volume 376, 1-18, 2004.
\newblock{http://www.sciencedirect.com/science/article/pii/S002437950300661X}.
  

\bibitem[FF04II]{FriedFreiII}
Shmuel Friedland and  Pedro J. Freitas.
\newblock{Revisiting the Siegel upper half plane I}.
\newblock{{\em Linear Algebra and its Applications}}, volume 376, 19 - 44, 2004.
\newblock{http://www.sciencedirect.com/science/article/pii/S0024379503006621}.
  
  
\bibitem[Hal03]{Hall}
Brian~C. Hall.
\newblock {\em {Lie groups, {L}ie algebras, and representations}}, volume 222
  of {\em {Graduate Texts in Mathematics}}.
\newblock Springer-Verlag, New York, 2003.
\newblock An elementary introduction.

\bibitem[Hel78]{Helg}
Sigurdur Helgason.
\newblock {\em {Differential geometry, {L}ie groups, and symmetric spaces}},
  volume~80 of {\em {Pure and Applied Mathematics}}.
\newblock Academic Press Inc. [Harcourt Brace Jovanovich Publishers], New York,
  1978.

\bibitem[GJT98]{GJT}
Yves Guivarc'h, Lizhen Ji, and J. C. Taylor, 
\newblock {\em {Compactifications of symmetric spaces}},
  volume~156 of {\em {Progress in Mathematics}}.
\newblock Birkh\"auser Boston.

\bibitem[GKW17]{GuichardKasselWienhard}
Olivier Guichard, Fanny Kassel, and Anna Wienhard. 
\newblock{Tameness of Riemannian locally symmetric spaces arising from Anosov representations}
\newblock{{\em ArXiv e-prints}}, August 2015.
\newblock arXiv:1508.04759, under revision.

\bibitem[Ji97]{Ji-Art}
Lizhen Ji.
\newblock {Satake and {M}artin compactifications of symmetric spaces are
  topological balls}.
\newblock {\em Math. Res. Lett.}, 4(1):79--89, 1997.

\bibitem[JS16]{JS}
Lizhen Ji and Anna-Sofie Schilling.
\newblock{Polyhedral Horofunction Compactification as Polyhedral Ball}
\newblock{{\em ArXiv e-prints}}, August 2016.
\newblock arXiv:1607.00564v2.

\bibitem[KL16]{KL}
Michael Kapovich and Bernhard Leeb.
\newblock{Finsler bordifications of symmetric and certain locally symmetric spaces}
\newblock{{\em ArXiv e-prints}}, March 2016.
\newblock arXiv:1505.03593v6.

\bibitem[KMN06]{KarlssonMetzNoskov}
Anders Karlsson, Volker Metz, and Gennady A. Noskov. 
\newblock{Horoballs in simplices and Minkowski spaces }
\newblock{{\em  Int. J. Math. Math. Sci.}}, , 1-20, 2006.

\bibitem[Pa]{Parreau}
Anne Parreau.
\newblock {private communication}.

\bibitem[Pla95]{Plan}
Pierre Planche.
\newblock {\em {G{\'e}om{\'e}trie de Finsler sur les Espaces Sym{\'e}triques}}.
\newblock PhD thesis, Universit{\'e} de Gen{\`e}ve, 1995.

\bibitem[Rie02]{Rief}
Marc~A. Rieffel.
\newblock {Group {$C^*$}-algebras as compact quantum metric spaces}.
\newblock {\em Doc. Math.}, 7:605--651 (electronic), 2002.

\bibitem[Sat60]{Sat}
Ichir{\^o} Satake.
\newblock {On representations and compactifications of symmetric {R}iemannian spaces}.
\newblock {\em Ann. of Math. (2)}, 71:77--110, 1960.

\bibitem[S13]{Schilling}
Anna-Sofie Schilling. 
\newblock {Busemann Compactification of Finite-Dimensional
Normed Spaces and of Symmetric Spaces}.
\newblock Diplomarbeit Uni.\ Heidelberg, 2013.

\bibitem[Wal07]{WalII}
Cormac Walsh.
\newblock {The horofunction boundary of finite-dimensional normed spaces}.
\newblock {\em Math. Proc. Cambridge Philos. Soc.}, 142(3):497--507, 2007.

\bibitem[{Wal}10]{WalI}
Cormac Walsh.
\newblock {The horoboundary and isometry group of Thurston's Lipschitz metric}.
\newblock In {\em Handbook of {T}eichm\"uller theory. {V}ol. {IV}}, volume~19
  of {\em IRMA Lect. Math. Theor. Phys.}, pages 327--353. Eur. Math. Soc.,
  Z\"urich, 2014.


\end{thebibliography}
\end{document}